\newtheorem{theorem}{Theorem}[section]
\newtheorem{lemma}[theorem]{Lemma}
\newtheorem{proposition}[theorem]{Proposition}
\newtheorem{corollary}[theorem]{Corollary}
\newtheorem{definition}[theorem]{Definition}
\newtheorem{conjecture}[theorem]{Conjecture}
\newtheorem{remark}[theorem]{Remark}
\DeclareMathAlphabet{\mathbfit}{OML}{cmm}{b}{it}
\newcommand{\ER}{Erd\H{o}s-R\'{e}nyi }
\newcommand{\cadlag}{\ifmmode\mathcal D\else c\`adl\`ag \fi}
\newcommand{\ba}{\begin{array}}
	\newcommand{\ea}{\end{array}}
\newcommand{\bea}{\begin{eqnarray}}
	\newcommand{\eea}{\end{eqnarray}}
\newcommand{\be}{\begin{equation}}
	\newcommand{\ee}{\end{equation}}
\def \R {{\mathbb R}}
\def \N {{\mathbb N}}
\def \P {{\mathbb P}}
\def \E {{\mathbb E}}
\def \cA {\mathcal{A}}
\def \cC {\mathcal{C}}
\def \cE {\mathcal{E}}
\def \cG {\mathcal{G}}
\def \cM {\mathcal{M}}
\def \cN {\mathcal{N}}
\def \cT {\mathcal{T}}
\def \a {{\alpha}}
\def \b {{\beta}}
\def \d {{\delta}}
\def \z {{\z}}
\def \z {{\zeta}}
\def \G {{\Gamma}}
\def \En {{\mathcal{E}_n}}
\def \aut {{\ell!}}
\def \1{\mathbbm{1}} 
\author{
Elena Magnanini \thanks{WIAS Berlin. \\ magnanini@wias-berlin.de} 
\and
Giacomo Passuello \thanks{Dipartimento di Matematica \textquotedblleft Tullio Levi-Civita",
Universit\`a di Padova.\\
giacomo.passuello@phd.unipd.it}
}
\title{Statistics for the triangle density in ergm  and its mean-field approximation}
\date{\today}
\begin{document}
\maketitle

\begin{abstract}
We consider the edge-triangle model (or Strauss model), and focus on the asymptotic behavior of the triangle density when the size of the graph increases to infinity.
This random graph belongs to the class of exponential random graphs, which follows the statistical mechanics approach of introducing a Hamiltonian to weigh the probability measure on the state space of graphs. In the analyticity region of the free energy, we prove a law of large numbers for the triangle density. Along the critical curve, where analyticity breaks down, we show that the triangle density concentrates with high probability in a neighborhood of its typical value.
A predominant part of our work is devoted to the study of a mean-field approximation of the edge-triangle model, where explicit computations are possible. In this setting we can go further, and additionally prove a standard and non-standard central limit theorem at the critical point, together with many concentration results obtained via large deviations and statistical mechanics techniques.
Despite a rigorous comparison between these two models is still lacking, we believe that they are asymptotically equivalent in many respects, therefore we formulate conjectures on the edge-triangle model, partially supported by simulations, based on the mean-field investigation.  
\noindent  \bigskip
\\
{\bf Keywords:} exponential random graphs, edge-triangle model, mean-field approximation,
large deviations, phase transition, standard and non-standard limit theorems. 
\\\\
{\bf AMS Subject Classification 2010:} 60F05, 60F15, 05C80, 60F10, 60B10.
\end{abstract}
\newpage

\renewcommand{\baselinestretch}{0.9}\normalsize
\tableofcontents
\renewcommand{\baselinestretch}{1.0}\normalsize

\section{Introduction}
Exponential random graphs are an ubiquitous class of models able to capture
common network tendencies, such as clustering. Clustering is typically related to the presence of triangles in a graph. This can be easily motivated, for example, in the context of social networks; if two people share a common friend, or interest, it is more likely that they form a connection themselves. One of the main questions among sociologists is to understand how connectivity in local communities can affect the overall network structure \cite{YSB, GS, F1, F2}.   
This can be done by
introducing a probability measure that is  a function of the densities of certain given finite subgraphs (such as edges or triangles), thus biasing their occurrence, and then analyzing the large-scale properties of random networks sampled according to this distribution. Sampling is typically performed by means of Monte Carlo methods, such as the Glauber dynamics or the Metropolis–Hastings procedure \cite{BBS}. By following a statistical mechanics approach, the bias is encoded by a function called \emph{Hamiltonian}, contained in an exponential term, and the probability measure is then a \emph{Gibbs distribution} \cite{PN1, PN2, PN3}. Such class of graphs is particularly interesting since the Gibbs measure has the special property, among all other distributions (\emph{ensemble}) over the space of simple graphs, to assign higher probability to graphs that fit better some prescribed  constraints imposed by a given set of observations. This is of course very helpful if we want to build a mathematical model that encodes properties that are similar to those observed in a real network. Specifically, according to the Maximum Entropy
Principle introduced by Jaynes \cite{J1,J2}, this distribution maximizes the so-called \emph{Gibbs entropy} subject to the known constraints.

From a probabilistic perspective, exponential random graphs represent a generalization of the dense \ER random graph \cite{ER}, as their probability distribution is obtained by tilting the \ER measure by an exponential weight that contains different subgraph densities, thus introducing some dependence between the random edges. 
Many important and rigorous results on the model have been obtained so far \cite{CD, CDey, ChNotes}, sometimes imposing constraints on subgraph densities \cite{KY,NRS}. 

Our analysis will be focused on the (unconstrained) edge-triangle model \cite{Str86}, a two-parameter family of exponential random graphs in which dependence between the random edges is defined through triangles, and its \emph{mean-field} approximation \cite{BCM}.
In both cases, a crucial point, which is also preparatory for understanding phase transitions, is the derivation of the limiting \emph{free energy}. For the edge-triangle model, the analytical expression of this function is known, together with its phase diagram, in a region of parameters called \emph{replica symmetric regime} \cite{CD, CDey},
where it can be characterized as the solution of a one-dimensional maximization problem. Inside the replica symmetric region the analyticity of the free-energy may break down, thus revealing a phase transition \cite{CDey, RY, AZ}. This happens along a critical curve that ends in a critical point where the phase transition is of second order. As expected, the edge-triangle model and its mean-field approximation share the same limiting free energy, and this is a crucial property that motivates all our analysis.  

Regarding the first model, one of the key results of our paper is the characterization of the limiting behavior of the triangle density (as the graph size $n$ tends to infinity) within the replica-symmetric regime. Our analysis provides a strong law of large numbers whenever the parameters $(\alpha,h)$ (of triangles and edges, respectively) are taken outside the critical curve, and proves that on the critical curve the triangle density concentrates with high probability in a neighborhood of the free energy maximizers. Our results extend to other graph statistics. 

As mentioned, the predominant part of our investigation is concerned with the mean-field approximation. Its major advantage is that the Hamiltonian can be expressed as a function of the edge density, and exact computations are possible (like in the \emph{Curie--Weiss} model). In this setting, we consider an approximated  triangle density for which we prove  a central limit theorem whenever the parameters lie outside the critical curve and away from the critical point. We also characterize the fluctuations of the triangle density at the critical point, presenting a non-standard central limit theorem with scaling exponent  $3/2$.
Some heuristic computations based on large deviations estimates, suggest that the triangle density in the edge-triangle model may exhibit the same limiting behavior as the mean-field approximation. We then formulate conjectures about fluctuations of the triangle density in the whole replica symmetric regime (which also includes the critical curve and the critical point).

The paper is organized as follows. In Sec.~\ref{model} we introduce the exponential random graph family and we recall many results present in the literature. We then focus on the edge-triangle model and its mean-field approximation and collect some main properties.  In Sec.~\ref{results} we state our results, first for the edge-triangle model, and then for its mean-field approximation. Secs.~\ref{proofs_et}--\ref{proofs-mf} are devoted to the proofs. Sec.~\ref{proofs_et} contains the proof of the strong law of large numbers given in Thm.~\ref{Thm_LLN}, which is based on the exponential convergence of the sequence of triangle densities, together with the proof of Thm.~\ref{Thm_convergence_in_distribution}, which provides a
concentration result valid on the critical curve. Thms.~\ref{Thm_LLN_Hk}--\ref{genHk} generalize these statements to any simple graph. Sec.~\ref{proofs-mf} is entirely devoted to the mean-field model: we prove the analogs of the results
derived for the edge-triangle model in the previous section, 
sometimes in a stronger form, and we can go further, by characterizing the fluctuations of the triangle density up to the critical point (Thms.~\ref{Thm_standard_CLT} and \ref{Thm_non-standard_CLT}).  
These results are extended in Thm.~\ref{Thm_conditional_limit_theorems_mfm} to the critical curve under a suitable conditional measure. 
As a byproduct of this analysis, in Props.~\ref{prop_speed_mmf} and \ref{prop_speed_conditional_mfm}, we determine the speed of convergence of the average triangle density in the unconditional and conditional settings. Finally, Sec.~\ref{CLT_discussion} is dedicated to a less rigorous discussion, where we present the conjectures on the edge-triangle model inspired by the analysis of its mean-field approximation, supported by both heuristic computations and simulations. 

\section{Models and background}\label{model}
\subsection{Exponential random graphs} \label{first_subsect}
As we pointed out before, exponential random graphs (ERGs) are a class of random graphs where the probability measure over the state space is tilted by a function called Hamiltonian, devised to enhance or decrease the probability of certain structures in the graph. Before giving its definition we need to introduce the notion of homomorphism density. 

\begin{definition}[homomorphism]\label{hom}
A homomorphism from a graph $H$ to a graph $G$ is a mapping (not necessarily bijective)  $\varphi: V(H)\rightarrow V(G)$ such that if
$u, w$ are adjacent in $H$, then   $\varphi(u), \varphi(w)$ are adjacent in $G$.
\end{definition}
Note that $\varphi$ preserves adjacency but not non-adjacency. We denote by $|\text{hom}(H,G)|$ the number of homomorphism of a graph $H$ in $G$. For example, if $H$ is a triangle then  $|\text{hom}(H,G)|= 6 T_n(G)$, where $T_n(G)$ denotes the number of triangles in $G$. If $H$ is not a complete subgraph, the counting is more complicated. For example, if $H$ is a \emph{two-star} \footnote{A two-star is a path of length $2$, which has $3$ vertices and $2$ edges.} (or \emph{wedge}) and $G$ is a triangle, then $|\text{hom}(H,G)|= 3\cdot 2^2$; indeed, there are three copies of $H$ in $G$ (one for each root) and for each of them 4 possible homorphisms.
We define the homomorphism density as
\begin{equation}\label{def_graph_hom_density}
t(H,G) := \frac{|\text{hom}(H,G)|}{|V(G)|^{|V(H)|}}.
\end{equation}

In this paper we deal with simple graphs on $n$ labeled vertices and we denote by  $\mathcal{G}_n$ the set of such graphs. 
For any $k \in \mathbb{N}$, we consider $H_1, H_2, \dots, H_k$ \footnote{Here the subscript denotes the progressive numbering and has nothing to do with the number of vertices or edges. } pre-chosen finite simple graphs (such as edges, stars, triangles, cycles, \dots) weighted by a collection of real parameters contained in the vector $\boldsymbol{\beta}=(\beta_1, \dots, \beta_k)$. The Hamiltonian is a function $\mathcal{H}_{n;\boldsymbol{\beta}}: \mathcal{G}_n \to \mathbb{R}$ defined as 
\begin{equation}\label{Hamiltonian}
\mathcal{H}_{n;\boldsymbol{\b}}(G):=n^2\sum_{i=1}^{k}\beta_{i}t(H_{i},G)\,,
\quad \mbox{ for } G\in\cG_n.
\end{equation}
As probability measure on the space $\mathcal{G}_n$ we take the Gibbs probability density
\be\label{Gibbs}
\mu_{n; \boldsymbol{\b}} (G):=\frac{\exp \left(\mathcal{H}_{n,\boldsymbol{\b}}(G)\right)} {Z_{n;\boldsymbol{\b}}}, \quad \text{with } Z_{n;\boldsymbol{\b}}:=\sum_{G \in \cG_{n}} \exp \left(\mathcal{H}_{n;\boldsymbol{\b}}(G)\right),
\ee
where the normalizing constant $Z_{n;\boldsymbol{\beta}}$ is called {\em partition function}. Random graphs whose distribution is a Gibbs measure of the form \eqref{Gibbs} are called exponential random graphs. We will denote the related Gibbs measure and average
by $\P_{n;\boldsymbol{\b}}$ and $\E_{n;\boldsymbol{\b}}$, respectively.
Two crucial functions for studying the model are the finite-size and infinite-size free energy:
\be\label{FreeEnergy}
f_{n;\boldsymbol{\b}}\,:= \frac{1}{n^2}\ln Z_{n;\boldsymbol{\b}}\,\quad \text{ and } \quad
f_{\boldsymbol{\b}}\,:=\lim_{n\to+\infty} f_{n;\boldsymbol{\b}}
\,.\ee
A lack of analyticity in $f_{\boldsymbol{\b}}$ characterizes the presence of a phase transition.
An explicit expression of this function has been obtained in \cite{CD} when the vector of parameters $\boldsymbol{\beta}$ lies in a specific region called \emph{replica-symmetric regime} (term borrowed from spin glasses theory). As stated in \cite[Thm.~4.1]{CD}, if $\beta_{2},\dots, \beta_{k}$ are non-negative, then \begin{equation}\label{scalar_probl}
			f_{\boldsymbol{\b}} \,
			=\,
			\sup_{0\leq u\leq\,1}\left(\sum_{i=1}^{k}\beta_i\,u^{E(H_i)} -\frac{1}{2}I(u)\right),
		\end{equation}
		where $E(H_i)$ denotes the number of edges in $H_i$ and $I(u):=u\ln u + (1-u)\ln(1-u)$.
Despite this result covers only non-negative values of the parameters, the replica symmetric regime can be slightly extended including (not too big) negative values of $\beta_2, \dots, \beta_k$ (see \cite{CD}, Thm.~4.2). More precisely, \eqref{scalar_probl} holds whenever $\beta_2, \dots, \beta_k$ are such that 
\begin{equation}\label{beta_piccoli_rs}
\sum_{i=2}^{k} |\beta_{i}|E(H_{i})(E(H_{i})-1)<2 \,.
\end{equation}

\subsection{Presentation of the models}
The \emph{edge-triangle} or \emph{Strauss model}~\cite{Str86} is obtained by considering only the contribution of edges and triangles in the Hamiltonian \eqref{Hamiltonian}. By convention we  assume $H_1$ to be a single edge and $H_2$ to be a triangle. More precisely, by setting $\beta_3=\cdots=\beta_k=0$ in \eqref{Hamiltonian}, we get
\[
\mathcal{H}_{n;\boldsymbol{\beta}} (G) = n^2 \left[ \beta_1 t(H_1,G) + \beta_2 t(H_2,G)\right] \qquad G\in\mathcal{G}_n\,.
\]
Let $E_n(G)$ (resp.~$T_n(G)$) denote  the number of edges (resp.~triangles) in $G$. By recalling Def.~\ref{hom} of homomorphism density, we have
\begin{equation}\label{ET_hom}
t(H_1,G) = \frac{2E_n(G)}{n^2} \quad \text{ and } \quad t(H_2,G) = \frac{6T_n(G)}{n^3}.
\end{equation}
Therefore, by performing the change of variable $h:=2\beta_1$; $\alpha:=6\beta_2$, we can equivalently consider
\be\label{Hamiltonian-ETmodel}
\mathcal{H}_{n; \a,h}(G) = \frac{ \a}{ n}  T_n(G) +  h  E_n(G) \quad \text{ with } \a,h\in \R\,.
\ee
We will denote by $\P_{n;\a,h}$ the Gibbs measure related to this Hamiltonian, and by $\E_{n; \a,h}$ the corresponding expectation. Notice that in this setting condition \eqref{beta_piccoli_rs} reads $|\alpha|= 6 |\beta_2|<2$, and, therefore, the replica symmetric regime coincides with the region $\alpha>-2$, $h\in\mathbb{R}$. The free energy \eqref{scalar_probl} reduces then to 
\begin{equation}\label{free_energy}
f_{\a,h} \, = \,  \sup_{0 \leq u \leq 1} \left(\frac{\alpha}{6}u^3 + \frac{h}{2}u - \frac{1}{2}I(u) \right) \, = \, \frac{\a}{6}{(u^{*})}^3 +\frac{h}{2}u^{*} -\frac{1}{2}I(u^{*}),
\end{equation}
where $I(u)$ is defined below \eqref{scalar_probl} and $u^{*}=u^*(\alpha,h)$ is a maximizer that solves the fixed-point equation
\begin{equation}\label{FixPointEq}
\frac{e^{\alpha\,u^{2} +h}}{1+e^{\alpha\,u^{2} +h}}=u\,.
\end{equation}
A numerical investigation of the optimizers of the free energy when $\alpha$ is negative and $|\alpha|$ is large has been done in \cite{GGM}. Equation \eqref{FixPointEq} can admit more than one solution at which the supremum in \eqref{free_energy} is attained, and this denotes the presence of a phase transition inside the replica symmetric regime. When the parameters $\alpha ,h$ are chosen in this region, the edge-triangle model, when $n$ goes to infinity, becomes \textit{indistinguishable} from an \ER \textit{graphon} with connection probability $u^*$ (we refer the reader to Sec.~\ref{graphs_limit}, where these notions are made precise). This remains true even when the supremum is not unique; in this case the parameter $u^*$ is randomly chosen according to some (unknown) probability distribution on the set of solutions of \eqref{free_energy} (see \cite{CD}, Thm.~4.2). The effect of the phase transition is then a jump between very different values of the limiting \ER graphon $u^*$.

\paragraph{Phase diagram.}
We recall that the limiting free energy $f_{\alpha,h}$ is well defined on the whole replica symmetric regime $\alpha>-2$, $h\in\mathbb{R}$. However, the fixed point equation \eqref{FixPointEq} can admit more than one solution, and this is strictly related to the loss of analyticity of $f_{\alpha, h}$. More precisely, \eqref{FixPointEq} has exactly one solution on the whole replica symmetric regime except for a certain critical curve $\mathcal{M}^{rs}$ that starts at the critical point $(\alpha_c,h_c) := \left(\frac{27}{8},\ln 2 -\frac{3}{2}\right)$  and that can be written as $h=q(\alpha)$ for a (non-explicit) continuous and strictly decreasing function $q$:
\begin{equation}\label{curve_rs}
\mathcal{M}^{rs} := \left\{(\alpha,h) \in (\alpha_c,+\infty) \times (-\infty,h_c): h = q(\alpha)\right\}.
\end{equation}
\begin{figure}[h!] 
\centering
\includegraphics[scale=0.7]{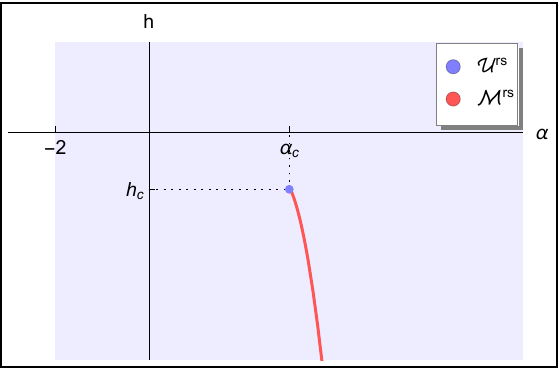}
\caption{\small Illustration of the phase in replica symmetric regime taken from \cite{BCM}. The  curve $\mathcal{M}^{rs}$ \eqref{curve_rs} represents the region of $(\alpha,h)$ where the optimization problem~\eqref{free_energy} admits two solutions. Inside the blue region, that includes the critical point $(\alpha_c,h_c)$, the scalar problem \eqref{free_energy} admits a unique solution.
}\label{fig:phase_diagram}
\end{figure}
In \cite[Prop.~3.2]{RY} it has been proved that off the curve $\mathcal{M}^{rs}$ the scalar problem \eqref{free_energy} admits one solution, whereas it has exactly two solutions along the curve $\mathcal{M}^{rs}$. The free energy is analytic on the region $\mathcal{U}^{rs}\setminus \{(\alpha_c,h_c)\}$, where
\begin{equation}\label{reg_rs}
\mathcal{U}^{rs} := \left( (-2,+\infty) \, \times \mathbb{R} \right) \setminus \mathcal{M}^{rs}.
\end{equation}
 Moreover, at the critical point $(\alpha_c,h_c)$ the second order partial derivatives of $f_{\alpha,h}$ diverge (see \cite{RY}, Thm.~2.1), therefore we observe a second order phase transition. Finally, along the curve $\mathcal{M}^{rs}$ the first order partial derivatives of $f_{\alpha,h}$ have jump discontinuities, and we observe a first order phase transition. 
Fig.~\ref{fig:phase_diagram} provides a qualitative representation of the phase diagram.

\paragraph{Approximating the number of triangles.}
 As a consequence of the convergence of the ERG to the \ER graphon with parameter $u^*$ (which holds in probability w.r.t. the so-called  \emph{cut distance}, see \cite[Thm.~4.2]{CD}), we can heuristically approximate the triangle density (as well as other graph-statistics) in the large $n$ limit. The \ER random graph with parameter $u^*$ and $n$ vertices has, on average, ${u^*}^3 \binom{n}{3}$ triangles and $u^*\binom{n}{2}$ edges. We observe that 
$${u^*}^3 \binom{n}{3} \approx \frac{4}{3n^3}\left(u^* \binom{n}{2}\right)^3.$$
What we expect is that the same holds, within the replica symmetric regime and when $n$ is large, for the ERG. Thus, we introduce the approximated count of triangles
\begin{equation} \label{barT}
T_n(G)\approx \frac{4}{3n^3}E_n(G)^3=: \bar{T}_n(G). 
\end{equation}
Alternatively, going back to Def.~\ref{def_graph_hom_density} of homomorphism density, we can equivalently say that we approximate the number of triangles $T_n(G)=\frac{n^{3}t(H_2,G)}{6}$ (see \eqref{ET_hom}) with $\bar{T}_n(G)=\frac{n^{3}t^3(H_1,G)}{6}$.

\paragraph{Mean-field approximation.}
Definition \eqref{barT} leads to the following mean-field approximation, originally introduced in \cite{BCM}, of the edge-triangle Hamiltonian \eqref{Hamiltonian-ETmodel}:
\begin{equation}\label{H_mf}
	\bar{\mathcal{H}}_{n;\alpha,h}(G):=\frac{\alpha}{n}\bar{T}_n(G)+hE_n(G)\,, \quad \text{ for } G\in\mathcal{G}_n.
\end{equation}
We borrow this terminology from statistical mechanics, due to the similarities with the \emph{Curie--Weiss} model (see e.g.~\cite[Chap.~2]{FV}), which we are going to highlight further in the next paragraph. The big advantage of Hamiltonian \eqref{H_mf} is that it is just a function of the one dimensional
parameter $t(H_1, G) = \frac{2 E_n(G)}{n^2}$,
taking values in $\G_{n} :=\left\{0,\frac{2}{n^2}, \dots,1-\frac{1}{n}\right\}$.
We denote by $\bar{\P}_{n;\a,h}$ and $\bar{\E}_{n;\a,h}$ the corresponding measure and expectation, respectively. Moreover, as usual, we define the finite size free energy as
\begin{equation}\label{fe_mf}
	\bar{f}_{n;\alpha,h}:= \frac{1}{n^{2}}\ln \bar{Z}_{n;\alpha,h}.
\end{equation}
A crucial property of this approximated model is the following (see \cite[Thm.~8.2]{BCM}). Let $(\alpha,h) \in (-2,+\infty) \times \mathbb{R}$ and let $f_{\alpha,h}$ as in \eqref{free_energy}. Then
	\begin{equation}
		\label{equiv_fren}
		\lim_{n\to+\infty}\bar{f}_{n;\alpha,h}= f_{\alpha,h}\,.
	\end{equation}
In other words, the edge-triangle model and this mean-field approximation share the same infinite volume free energy; this result will be extensively used in the proofs of Sec. \ref{proofs-mf}.

\subsection{Notation and preliminaries}

We denote by $\mathcal{E}_n$ the edge set of the complete graph
on $n$ vertices, with elements  labeled from 1 to $\binom{n}{2}$ and we set $\mathcal{A}_n := \{0,1\}^{\mathcal{E}_n}$. We observe that there is a one-to-one correspondence between $\cA_n$ and the set of $n\times n$ symmetric adjacency matrices with zeros on the diagonal and the graphs in $\cG_n$. As a consequence, to each graph $G\in\cG_n$ we can associate an element ${x}=(x_{i})_{i \in \cE_n}\in\cA_n$ where $x_{i}=1$ if the edge $i$ is present in $G$, and $x_{i}=0$ otherwise.
With an abuse of nomenclature, in the rest of the paper we will refer to the elements of $\mathcal{A}_n$ as adjacency matrices, and we will write $E_n(x)=E_n(G)$, $T_n(x)=T_n(G)$ and $\bar{T}_n(x)=\bar{T}_n(G)$ whenever $x \in \cA_n$ is the adjacency matrix of a graph $G \in \cG_n$. This representation allows for the following equivalent formulation of the Hamiltonians \eqref{Hamiltonian-ETmodel}--\eqref{H_mf}, as  functions on $\cA_n$:
\begin{equation}\label{Hamilt_ERG}
\mathcal{H}_{n;\alpha,h}(x) = \frac{\alpha}{n} \sum_{\{i,j,k\} \in \mathcal{T}_n} x_i x_j x_k + h \sum_{i \in \mathcal{E}_n} x_i,
\end{equation}
\begin{equation}\label{Hamilt_MF}
\bar{\mathcal{H}}_{n;\alpha,h}(x) = \frac{4\alpha}{3n^4}\Big(\sum_{i \in \mathcal{E}_n} x_i\Big)^3  + h \sum_{i \in \mathcal{E}_n} x_i,
\end{equation}
where $\mathcal{T}_n:=\{\{i,j,k\} \subset \mathcal{E}_n: \{i,j,k\} \text{ is a triangle}\}$. 
The Gibbs probability $\mathbb{P}_{n;\a,h}$ (resp.~$\bar{\mathbb{P}}_{n;\a,h}$) will act consequently on $\cA_n$.
\begin{remark}\label{consist_cond}
The sequence of measures $(\mathbb{P}_{n;\alpha,h})_{n \geq 1}$ (as well as $(\bar{\mathbb{P}}_{n;\alpha,h})_{n \geq 1}$) satisfies proper consistency conditions allowing for the application of Kolmogorov Existence Theorem (see, for example, Appendix A.7 in \cite{E}). 
As a consequence, there exists a unique probability measure $\P_{\a,h}$ on the space $\left(\{0,1\}^{ \N},\mathcal{B}(\{0,1\}^{\N})\right)$
with marginals corresponding to the measures $\P_{n;\a,h}$, for all $n\in\N$ (here $\mathcal{B}$ denotes the Borel  $\sigma-$algebra).
\end{remark}
\begin{remark}
Note that Hamiltonian \eqref{Hamilt_ERG} has the same form of the energy function typically used in interacting particle systems. By making a parallelism with the Curie--Weiss model,  we can think of an ERG as a system where each edge is a particle having a \emph{spin} ($0$ or $1$), which interacts with its neighbors. The notion of \textquotedblleft neighbor" depends on the specific choice of the subgraphs $H_1,\dots,H_k$; for the edge-triangle model, two edges are neighbors if they are adjacent.
This interaction is local, however, if we ignore the relative position of edges, we recover \eqref{Hamilt_MF}: 
$$\sum_{\{i,j,k\} \in \mathcal{T}_n} x_i x_j x_k 
= \sum_{i \in \En} x_i \sum_{\substack{j,k \in \mathcal{E}_n:\\ \{i,j,k\} \in \mathcal{T}_n}} x_j x_k 
\approx  \sum_{i \in \En} x_i \sum_{j,k \in \mathcal{E}_n} \frac{4x_j x_k}{3n^3}\,,$$
where the factor $4$ appears when we replace the number of wedges roughly with $\left(\frac{2}{n^2}\sum_{i \in \En} x_i \right)^2$, and adjusting the normalization in accordance with the choice $x=(x_i)_{i\in \En}\equiv 1$. The factor $1/3$ avoids overcounting.
\end{remark}
We are interested in understanding the asymptotic behavior of the number of triangles. Let $X = (X_i)_{i\in\cE_n} \in \cA_n$ be the random adjacency matrix of an ERG with law $\mu_{n;\a,h}$. We consider the random variables
\be\label{def:E_n}
E_n \, \equiv \, E_n(X) = \, \sum_{i\in\cE_n} X_i \, ,
\ee
\be\label{def:T_n}
T_n \, \equiv \, T_n(X) = \, \sum_{\{i,j,k\}\in\cT_n} X_iX_jX_k \, ,
\ee
\be\label{def:bT_n}
\bar{T}_n \, \equiv \, \bar{T}_n(X) = \, \frac{4}{3 n^3} \Big( \sum_{i\in\cE_n} X_i\Big)^3.
\ee

We prove classical limit theorems of the sequences $(T_n)_{n\geq 1}$ and $(\bar{T}_n)_{n\geq 1}$ inside the replica symmetric regime. A predominant part of our results is concerned with the sequence $(\bar{T}_n)_{n\geq 1}$, since the mean-field approximation encoded by the Hamiltonian \eqref{Hamilt_MF} allows for explicit computations.

\begin{definition}
For each $n \in \mathbb{N}$, we define the \textbf{average} and the \textbf{variance of the triangle density}, respectively of the edge-triangle model and of the mean-field approximation, as
	\begin{align}\label{average&variance_edge_density}
		m^{\Delta}_n(\a,h):=\frac{6\E_{n;\a,h}\left(\frac{T_n}{n}\right)}{n^2} \quad \text{ and } \quad v^{\Delta}_{n}(\a,h):= \partial_\a m^{\Delta}_n(\a,h)\\
		\bar{m}^{\Delta}_n(\a,h):=\frac{6\bar{\E}_{n;\a,h}\left(\frac{\bar{T}_n}{n}\right)}{n^2} \quad \text{ and } \quad \bar{v}^{\Delta}_{n}(\a,h):= \partial_\a \bar{m}^{\Delta}_n(\a,h). \label{average&variance_edge_density_mf}
	\end{align}

\end{definition}
It is easy to see that 
\begin{align}
\frac{\E_{n;\a,h}\left(T_n\right)}{n^3}= \partial_{\a}f_{n;\a,h} \quad &\text{ and } \quad  \frac{\text{Var}_{n;\a,h}\left(T_n\right)}{n^3}= \partial_{\a\a}f_{n;\a,h}\label{var_rel}\\
\frac{\bar{\E}_{n;\a,h}\left(\bar{T}_n\right)}{n^3}= \partial_{\a}\bar{f}_{n;\a,h} \quad &\text{ and } \quad  \frac{\text{Var}_{n;\a,h}\left(\bar{T}_n\right)}{n^3}= \partial_{\a\a}\bar{f}_{n;\a,h}, \notag
\end{align}
therefore, $m^{\Delta}_n(\a,h)= 6\partial_{\a}f_{n;\a,h}$ and $v^{\Delta}_{n}(\a,h)=6\partial_{\a\a}f_{n;\a,h}$ (and the same holds for $\bar{m}^{\Delta}_n(\a,h)$ and $\bar{v}^{\Delta}_{n}(\a,h)$, replacing  $f_{n;\a,h}$ with $\bar{f}_{n;\a,h}$).
In the rest of the paper we will use the following notation to distinguish the optimizer(s) of the scalar problem \eqref{free_energy}, sometimes dropping the dependence on $(\alpha, h)$ to the sake of readability: 
 
\begin{equation}\label{u^*}
\begin{cases}
		u_0^*(\a,h) &\text{ if } (\a,h)\in \mathcal{U}^{rs}\setminus \{(\a_c,h_c)\}, \\
		u_1^*(\a,h) \text{ and } u_2^*(\a,h) &\text{ if } (\a,h) \in \mathcal{M}^{rs}, \\
		u_c^*(\alpha,h)=\frac{2}{3} &\text{ if } (\a,h)=(\a_c,h_c)\,.
	\end{cases}
\end{equation}
We are now ready for stating our results.

\section{Main results}\label{results}

\subsection{Edge-triangle model}\label{RET}

	\begin{theorem}[SLLN for $T_n$]\label{Thm_LLN}
	For all $(\alpha,h)\in\mathcal{U}^{rs}$,
	\begin{equation}\label{LLN}
		\frac{6T_n}{n^3} \, \xrightarrow{\;\;\mathrm{a.s.}\;\;}{} \, {u^{*}_0}^{3}(\alpha,h) \qquad \text{w.r.t. } \mathbb{P}_{\a,h}, \text{ as } n \to +\infty,
	\end{equation}	
	where $u^*_0$ solves the maximization problem \eqref{free_energy}.
\end{theorem}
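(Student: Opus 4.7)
The strategy is to upgrade convergence in probability of the triangle density $6T_n/n^3$ towards ${u_0^*}^3$ to an almost sure statement via the first Borel--Cantelli lemma, using the common probability space $(\{0,1\}^{\N},\mathcal{B}(\{0,1\}^{\N}),\P_{\alpha,h})$ provided by Remark~\ref{consist_cond}. I would organize the argument in three steps.

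\emph{Step 1 (convergence in probability).} On $\mathcal{U}^{rs}$ the scalar problem \eqref{free_energy} has a unique maximizer $u_0^*(\alpha,h)$ (extended by $u_0^*(\alpha_c,h_c):=2/3$ at the critical point, in accordance with \eqref{u^*}). By \cite[Thm.~4.2]{CD}, the graph sampled from $\P_{n;\alpha,h}$ converges in probability, in the cut metric, to the constant graphon equal to $u_0^*$. Since the triangle homomorphism density $t(H_2,\cdot)$ is continuous with respect to the cut distance, this already yields
\begin{equation}
\frac{6T_n}{n^3}\;=\;t(H_2,G_n)\;\xrightarrow{\ \P_{n;\alpha,h}\ }\;{u_0^*}^3(\alpha,h).
\end{equation}

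\emph{Step 2 (exponential tail bound).} For every $\delta>0$ I would establish
\begin{equation}
\P_{n;\alpha,h}\!\left(\left|\frac{6T_n}{n^3}-{u_0^*}^3\right|\geq \delta\right)\;\leq\;e^{-c(\delta)\,n^2}
\end{equation}
for all $n$ large enough, via the Chatterjee--Varadhan large deviation principle for Erd\H{o}s--R\'enyi graphons tilted by the Hamiltonian \eqref{Hamilt_ERG}. A direct computation (using \eqref{ET_hom}) rewrites the tilt as $n^2\bigl[\tfrac{\alpha}{6}t(H_2,W)+\tfrac{h}{2}t(H_1,W)\bigr]$, so the resulting good rate function equals, up to the additive constant $f_{\alpha,h}$, the negative of the Chatterjee--Diaconis functional appearing in \eqref{scalar_probl}. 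On $\mathcal{U}^{rs}$ this functional is maximized only at the constant graphon $u_0^*$, hence the tilted rate function has $u_0^*$ as its unique zero. Applied to the closed cut-metric set $\{W:\,|t(H_2,W)-{u_0^*}^3|\geq \delta\}$, which does not contain this zero, the LDP upper bound gives the exponential decay with some $c(\delta)>0$.

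\emph{Step 3 (Borel--Cantelli).} The bounds of Step 2 are summable in $n$, so for each rational $\delta>0$ the first Borel--Cantelli lemma, applied under the joint law $\P_{\alpha,h}$ of Remark~\ref{consist_cond}, gives
\begin{equation}
\P_{\alpha,h}\!\left(\left|\frac{6T_n}{n^3}-{u_0^*}^3\right|\geq \delta \text{ for infinitely many } n\right)=0,
\end{equation}
and intersecting over a countable sequence $\delta\downarrow 0$ yields the almost sure convergence \eqref{LLN}.

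The main obstacle lies in Step 2, namely producing a strictly positive rate on the deviation set throughout $\mathcal{U}^{rs}$. The most delicate point is at the critical coordinates $(\alpha_c,h_c)$, where $f_{\alpha,h}$ fails to be analytic yet the maximizer of \eqref{free_energy} stays unique and equal to $2/3$; uniqueness of the maximizer is precisely what keeps the LDP rate strictly positive on $\{|t(H_2,\cdot)-{u_0^*}^3|\geq \delta\}$, so the argument carries through at $(\alpha_c,h_c)$ as well and covers the whole of $\mathcal{U}^{rs}$.
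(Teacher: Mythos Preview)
Your proposal is correct and follows essentially the same route as the paper: the paper obtains the exponential bound of your Step~2 by tilting the Chatterjee--Varadhan graphon LDP (Thm.~\ref{Thm_LDP_ER}) via \cite[Thm.~II.7.2]{E} to get the rate function $\mathcal{I}_{\alpha,h}$ of \eqref{tilted_rate_function}, uses uniqueness of the minimizer on $\mathcal{U}^{rs}$ exactly as you do, and then invokes Borel--Cantelli under $\P_{\alpha,h}$ from Remark~\ref{consist_cond}. Your Step~1 is a harmless extra (the exponential bound already gives convergence in probability), and your explicit remark on the critical point is a useful clarification the paper leaves implicit.
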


	\begin{theorem}\label{Thm_convergence_in_distribution}
	For all $(\alpha,h) \in \mathcal{M}^{rs}$ and for all sufficiently small $\varepsilon>0$, there exists a constant $\tau=\tau(\varepsilon;\alpha,h)>0$ such that if
	$$J(\varepsilon):= ({u_1^{*}}^{3}(\alpha,h)-\varepsilon,{u_1^{*}}^{3}(\alpha,h)+\varepsilon) \cup  ({u_2^{*}}^3(\alpha,h)-\varepsilon,{u_2^{*}}^3(\alpha,h)+\varepsilon),$$
	then, for large enough $n$
	\[
	\mathbb{P}_{n;\alpha,h}\left(\frac{6T_n}{n^3}\in J(\varepsilon)\right)\geq 1-e^{-\tau n^{2}}, 
	\]
	where $u_1^*(\alpha,h)$ and $u_2^*(\alpha,h)$ are the two maximizers of the scalar problem \eqref{free_energy}.
\end{theorem}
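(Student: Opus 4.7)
The plan is to translate the event $\{6T_n/n^{3}\notin J(\varepsilon)\}$ into a large-deviation statement on the space of graphons, and to exploit a uniform gap between $f_{\alpha,h}$ and the supremum of the free-energy functional over the corresponding closed set. The approach parallels the one used for Thm.~\ref{Thm_LLN}; the only difference is that on $\mathcal{M}^{rs}$ the scalar problem \eqref{free_energy} has \emph{two} maximizers, so one can only hope for concentration on a union of two neighborhoods rather than convergence to a single limit.

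Concretely, I would start by recalling from \cite{CD,ChNotes} the graphon formulation: at speed $n^{2}$, the Chatterjee--Varadhan LDP for $G(n,1/2)$ tilted by $\exp(\mathcal{H}_{n;\alpha,h})$ has rate function $f_{\alpha,h}-\Psi(W)$, where
\[
\Psi(W) \defeq \frac{\alpha}{6}\, t(H_2, W) + \frac{h}{2}\, t(H_1, W) - \frac{1}{2} I(W), \qquad I(W) \defeq \int_{[0,1]^2} I(W(x,y))\, dx\, dy,
\]
so that $f_{\alpha,h}=\sup_W \Psi(W)$ by \eqref{scalar_probl}. In the whole replica-symmetric regime, every maximizer of $\Psi$ is a constant graphon $W\equiv u^{*}$ with $u^{*}$ solving \eqref{FixPointEq}, and along $\mathcal{M}^{rs}$ there are exactly two such maximizers $W_i\equiv u_i^{*}$, with $t(H_2,W_i)=(u_i^{*})^{3}$ (see \cite[Thm.~4.2]{CD} and \cite[Prop.~3.2]{RY}). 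Writing $W_{G_n}$ for the empirical graphon of the random adjacency matrix $X$, the identity $6T_n/n^{3}=t(H_2,W_{G_n})$ rephrases the event of interest as $\{W_{G_n}\in\mathcal{W}_\varepsilon\}$, with
\[
\mathcal{W}_\varepsilon \defeq \bigl\{W : t(H_2,W)\notin J(\varepsilon)\bigr\},
\]
a closed subset of the compact space of graphons modulo measure-preserving isomorphism.

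The heart of the argument, and the main obstacle, is to produce a uniform gap: since $\Psi$ and $t(H_2,\cdot)$ are continuous in cut distance and no maximizer of $\Psi$ sits in $\mathcal{W}_\varepsilon$ (each $W_i$ has triangle density in $J(\varepsilon)$), compactness furnishes a constant $\tau=\tau(\varepsilon,\alpha,h)>0$ such that $\sup_{W\in\mathcal{W}_\varepsilon}\Psi(W)\leq f_{\alpha,h}-2\tau$. The delicate point is ruling out the a priori possibility that a \emph{non-constant} graphon with $t(H_2,W)$ outside $J(\varepsilon)$ makes $\Psi(W)$ arbitrarily close to $f_{\alpha,h}$; this rests precisely on the complete classification of maximizers recalled above. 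Once the gap is secured, the LDP upper bound together with the convergence $f_{n;\alpha,h}\to f_{\alpha,h}$ yields
\[
\mathbb{P}_{n;\alpha,h}\bigl(6T_n/n^{3}\notin J(\varepsilon)\bigr) \leq \exp\!\Bigl(-n^{2}\bigl(\inf_{W\in\mathcal{W}_\varepsilon}(f_{\alpha,h}-\Psi(W))-o(1)\bigr)\Bigr) \leq e^{-\tau n^{2}}
\]
for $n$ sufficiently large, which is the claimed estimate.
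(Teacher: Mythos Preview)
Your proposal is correct and follows essentially the same route as the paper: both pass to the graphon space, invoke the tilted Chatterjee--Varadhan LDP for $\mathbb{P}_{n;\alpha,h}$ (the paper via Ellis's Thm.~II.7.2, you via the equivalent ``gap from compactness + LDP upper bound'' phrasing), identify the closed set $\{W:t(H_2,W)\notin J(\varepsilon)\}$, and use that on $\mathcal{M}^{rs}$ the only maximizers of $\Psi$ are the two constant graphons $W_i\equiv u_i^{*}$ to obtain a strictly positive infimum of the rate function over that set. The paper packages the last step by citing Ellis's Thm.~II.7.2(b) directly, whereas you spell out the compactness argument; these are the same mechanism.
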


By replacing the triangle homomorphism density by the homomorphism density of any simple graph, we immediately obtain the following generalizations of  Thm.~\ref{Thm_LLN} and Thm.~\ref{Thm_convergence_in_distribution}.

\paragraph{Generalization to a generic simple graph.}
Fix $k\in\mathbb{N}$ with $k>2$. Let $H_k$, be a pre-chosen finite simple graph (such as a  square, a cycle, a clique \dots) with $E(H_k)$ edges, and let $t(H_k, G)$ be the homomorphism density \eqref{def_graph_hom_density} of $H_k$.
The following theorems characterize the asymptotic behavior of $(t(H_k, \cdot))_{n\geq 1}$\footnote{This notation implies that we are considering the sequence of random variables.}  in the replica symmetric regime. 

	\begin{theorem}\label{Thm_LLN_Hk}
	For all $(\alpha,h)\in\mathcal{U}^{rs}$,
	\begin{equation}\label{LLN_Hk}
		t(H_k, \cdot) \, \xrightarrow{\;\;\mathrm{a.s.}\;\;}{} \, {u^{*}_0}^{E(H_k)}(\alpha,h) \qquad \text{w.r.t. } \mathbb{P}_{\a,h}, \text{ as } n \to +\infty,
	\end{equation}	
	where $u^*_0$ solves the maximization problem \eqref{free_energy}.
\end{theorem}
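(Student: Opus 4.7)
The plan is to replay for $t(H_k,\cdot)$ the very same two-step argument used for $6T_n/n^3$ in Theorem~\ref{Thm_LLN}: first obtain convergence in probability, then upgrade to almost sure convergence via an exponential-in-$n^2$ tail bound and Borel--Cantelli under the Kolmogorov extension $\mathbb{P}_{\alpha,h}$ of Remark~\ref{consist_cond}. The observation enabling this transfer is that the dependence on $H_k$ enters only through the homomorphism density $t(H_k,\cdot)$, which is a Lipschitz continuous functional of the graphon with respect to the cut metric (Lovász).

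For convergence in probability, I would invoke \cite[Thm.~4.2]{CD}: for every $(\alpha,h)\in\mathcal{U}^{rs}$, the graphon of a sample from $\mathbb{P}_{n;\alpha,h}$ converges in probability (in cut metric) to the constant graphon $\equiv u_0^*(\alpha,h)$, the unique solution of \eqref{FixPointEq} prescribed by \eqref{u^*}. Evaluating $t(H_k,\cdot)$ on the constant graphon gives $(u_0^*)^{E(H_k)}$, and cut-metric continuity of $t(H_k,\cdot)$ then yields
\[
t(H_k,\cdot)\ \xrightarrow{\ \mathbb{P}_{n;\alpha,h}\ }\ (u_0^*)^{E(H_k)}.
\]
To upgrade this to almost sure convergence, it suffices to show that for every sufficiently small $\epsilon>0$ there exists $\tau=\tau(\epsilon,\alpha,h)>0$ such that, for all $n$ large,
\[
\mathbb{P}_{n;\alpha,h}\bigl(\,|t(H_k,\cdot)-(u_0^*)^{E(H_k)}|>\epsilon\,\bigr)\le e^{-\tau n^2}.
\]
Because the right-hand side is summable in $n$, Borel--Cantelli under $\mathbb{P}_{\alpha,h}$ gives \eqref{LLN_Hk}.

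The delicate step is the exponential bound, and I would obtain it from the Chatterjee--Varadhan large deviations principle (at speed $n^2$) for the Gibbs measure on graphons. Inside $\mathcal{U}^{rs}$ the variational problem \eqref{free_energy} has $u_0^*$ as unique maximizer, so the associated good rate function (relative to the Erdős--Rényi reference measure tilted by the exponential weight $\mathcal{H}_{n;\alpha,h}$) is strictly positive on any cut-closed set of graphons avoiding a neighborhood of the constant graphon $\equiv u_0^*$. By continuity of $t(H_k,\cdot)$, the event $\{\,|t(H_k,\cdot)-(u_0^*)^{E(H_k)}|>\epsilon\}$ is contained in such a cut-closed set, whence the claimed exponential bound with some $\tau>0$. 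For the triangle case one can alternatively extract the exponential bound from convexity of $\alpha\mapsto f_{n;\alpha,h}$ together with the identity $T_n/n^3=\partial_\alpha f_{n;\alpha,h}$ and the convergence \eqref{equiv_fren}; for a generic $H_k$ this differentiation trick would require enlarging the parameter space by an auxiliary tilt proportional to $t(H_k,\cdot)$ and verifying that the perturbed free energy remains analytic around $(\alpha,h)$. The LDP-based route is cleaner because its positivity hypothesis reduces precisely to the uniqueness of the scalar maximizer on $\mathcal{U}^{rs}$, which is already used in Step~1.
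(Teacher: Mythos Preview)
Your proposal is correct and follows essentially the same route as the paper: the key input is the tilted large deviation principle on graphon space (Chatterjee--Varadhan plus Ellis's tilting, cf.\ Remark~\ref{Rmk_tilted_LDP}), the observation that in $\mathcal{U}^{rs}$ the unique zero of the rate function is the constant graphon $\widetilde{u}_0^*$, and that $\{\widetilde{g}:|t(H_k,\widetilde{g})-(u_0^*)^{E(H_k)}|\ge\varepsilon\}$ is a cut-closed set excluding this minimizer, giving the $e^{-\tau n^2}$ bound and hence a.s.\ convergence by Borel--Cantelli. Your preliminary Step~1 (convergence in probability via \cite[Thm.~4.2]{CD}) is redundant, since the exponential tail bound already delivers it; the paper goes straight to the LDP argument.
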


	\begin{theorem}\label{genHk}
	For all $(\alpha,h) \in \mathcal{M}^{rs}$ and for all sufficiently small $\varepsilon>0$, there exists a constant $\tau=\tau(\varepsilon;\alpha,h)>0$ such that if
	$$J(\varepsilon):= ({u_1^{*}}^{E(H_k)}(\alpha,h)-\varepsilon,{u_1^{*}}^{E(H_k)}(\alpha,h)+\varepsilon) \cup  ({u_2^{*}}^{E(H_k)}(\alpha,h)-\varepsilon,{u_2^{*}}^{E(H_k)}(\alpha,h)+\varepsilon),$$
	then, for large enough $n$
	\[
	\mathbb{P}_{n;\alpha,h}\left(t(H_k, \cdot)\in J(\varepsilon)\right)\geq 1-e^{-\tau n^{2}}, 
	\]
	where $u_1^*(\alpha,h)$ and $u_2^*(\alpha,h)$ are the two maximizers of the scalar problem \eqref{free_energy}.
\end{theorem}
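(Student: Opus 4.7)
The plan is to adapt the argument used for Thm \ref{Thm_convergence_in_distribution} (the triangle case), replacing the triangle count by the homomorphism density $t(H_k,\cdot)$ of the general subgraph $H_k$. The proof of the triangle case rests on two ingredients that are in fact graph-independent: first, the cut-distance concentration of the empirical graphon of the ERG on the set of optimizers of the variational problem \eqref{free_energy}, established in \cite[Thm.~4.2]{CD}; second, the Lipschitz continuity of homomorphism densities with respect to the cut metric (the Counting Lemma). Both transfer directly to an arbitrary simple graph $H_k$.

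First I would record the trivial separation of the two target values. On $\mathcal{M}^{rs}$ the fixed-point equation \eqref{FixPointEq} has exactly two solutions $u_1^*\neq u_2^*$, both in $(0,1)$. Since $E(H_k)\geq 1$ and $u\mapsto u^{E(H_k)}$ is strictly monotone on $[0,1]$, the values ${u_1^*}^{E(H_k)}$ and ${u_2^*}^{E(H_k)}$ are distinct, so for all sufficiently small $\varepsilon>0$ the two intervals forming $J(\varepsilon)$ are disjoint and contained in $[0,1]$.

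Next I invoke \cite[Thm.~4.2]{CD}: for every $\delta>0$ there exists $\tau'=\tau'(\delta;\alpha,h)>0$ such that, for all sufficiently large $n$,
\begin{equation*}
\mathbb{P}_{n;\alpha,h}\bigl(d_\square(W^{X},\{\mathbf{u}_1^*,\mathbf{u}_2^*\})<\delta\bigr) \;\geq\; 1 - e^{-\tau' n^{2}},
\end{equation*}
where $W^X$ is the empirical graphon associated to the random adjacency matrix $X$ and $\mathbf{u}_i^*$ denotes the constant graphon with value $u_i^*$. The Counting Lemma provides the Lipschitz bound
\begin{equation*}
\bigl|t(H_k, W) - t(H_k, W')\bigr| \;\leq\; E(H_k)\, d_\square(W,W'),
\end{equation*}
and, since $t(H_k,\mathbf{u}_i^*) = (u_i^*)^{E(H_k)}$, choosing $\delta := \varepsilon/E(H_k)$ forces $t(H_k,\cdot)\in J(\varepsilon)$ whenever $W^X$ is within cut distance $\delta$ of $\{\mathbf{u}_1^*,\mathbf{u}_2^*\}$. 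Setting $\tau := \tau'(\varepsilon/E(H_k);\alpha,h)$ yields the claimed bound.

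The main (and essentially only) difficulty is confined to the first step: exponential concentration of the empirical graphon on the two maximizers along the critical curve. That input is already provided by \cite[Thm.~4.2]{CD} and used in the proof of Thm \ref{Thm_convergence_in_distribution}, so the generalization to an arbitrary $H_k$ is conceptually transparent once one notes that the Lipschitz constant in the Counting Lemma depends only on $E(H_k)$, and not on the specific structure of the subgraph. No new probabilistic obstacle arises.
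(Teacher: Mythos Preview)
Your proposal is correct and follows essentially the same line as the paper, which also proves Thm.~\ref{genHk} by adapting the argument of Thm.~\ref{Thm_convergence_in_distribution} verbatim with $t(H_k,\cdot)$ in place of the triangle density. The only cosmetic difference is the packaging: the paper works with the tilted LDP on graphon space and applies the upper bound (Ellis, Thm.~II.7.2(b)) to the closed set $\widetilde C^*_\varepsilon=\{\tilde g\in\widetilde{\mathcal W}: t(H_k,\tilde g)\notin J(\varepsilon)\}$, noting that $t(H_k,\widetilde u_i^*)=(u_i^*)^{E(H_k)}$ so this set misses both minimizers, whereas you quote the cut-distance concentration from \cite[Thm.~4.2]{CD} directly and then transfer via the Counting Lemma; these are two encodings of the same pair of facts (graphon concentration on the constant optimizers, and continuity of $t(H_k,\cdot)$ in $\delta_\square$).
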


After having proved a SLLN, it would be natural to investigate the fluctuations of the triangle density around its mean value. In Sec.~\ref{CLT_discussion} we perform simulations and provide conjectures, also based on the mean-field investigation of Sec.~\ref{proofs-mf}. It is in order to stress that the Yang-Lee  theorem \cite[Thm.~2]{LY}, which is a powerful tool when it comes to prove a central limit theorem, is not applicable to our case, since $Z_{n;\alpha,h}$ does not admit a polynomial representation in $z:=e^{\alpha}$.

\subsection{Mean-field approximation} \label{mean-field-results}
	\begin{theorem}[SLLN for $\bar{T}_n$]\label{Thm_LLN_mf}
	For all $(\alpha,h)\in\mathcal{U}^{rs}$, 
	\[
	\frac{6\bar{T}_n}{n^3} \, \xrightarrow{\;\;\mathrm{a.s.}\;\;}{} \, {u^{*}_0}^3(\alpha,h) \quad \text{ w.r.t. } \bar{\mathbb{P}}_{\alpha,h}, \text{ as } n \to +\infty,
	\]
	where $u^*_0$ solves the maximization problem in \eqref{free_energy}.
\end{theorem}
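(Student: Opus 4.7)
The key observation is that in the mean-field model the triangle count is a deterministic cubic function of the edge count: writing $Y_n := 2E_n/n^2$, we have
\begin{equation}
\frac{6\bar{T}_n}{n^3} \;=\; \frac{8E_n^3}{n^6} \;=\; Y_n^3,
\end{equation}
so by continuity of $y\mapsto y^3$ it suffices to prove $Y_n \to u_0^*(\alpha,h)$ almost surely with respect to $\bar{\P}_{\alpha,h}$. The plan is to mimic the structure of the proof of Thm.~\ref{Thm_LLN}: first derive an exponential concentration estimate of order $e^{-cn^2}$, then combine Borel--Cantelli with the Kolmogorov-type consistency from Rem.~\ref{consist_cond}.

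The concentration step is where the mean-field structure pays off. Because $\bar{\mathcal{H}}_{n;\alpha,h}$ depends on the adjacency matrix only through $E_n$, the distribution of $Y_n$ under $\bar{\P}_{n;\alpha,h}$ is explicit: for $y\in\Gamma_n$,
\begin{equation}
\bar{\P}_{n;\alpha,h}(Y_n=y) \;=\; \binom{\binom{n}{2}}{yn^2/2}\,\frac{\exp\!\left(n^2\bigl[\tfrac{\alpha}{6}y^3+\tfrac{h}{2}y\bigr]\right)}{\bar{Z}_{n;\alpha,h}}.
\end{equation}
A Stirling estimate for the binomial coefficient yields, uniformly in $y\in\Gamma_n$, an upper bound of the form $\exp(n^2[\phi(y)-\bar{f}_{n;\alpha,h}]+O(\ln n))$, where $\phi(y):=\tfrac{\alpha}{6}y^3+\tfrac{h}{2}y-\tfrac{1}{2}I(y)$ is exactly the functional whose supremum defines $f_{\alpha,h}$ in \eqref{free_energy}. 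Using the mean-field/ERG equivalence \eqref{equiv_fren}, $\bar{f}_{n;\alpha,h}\to f_{\alpha,h}=\sup_y \phi(y)$.

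Next I would exploit the replica-symmetric assumption: for $(\alpha,h)\in\mathcal{U}^{rs}$ the maximization problem \eqref{free_energy} admits the unique maximizer $u_0^*$, hence for every $\varepsilon>0$ there exists $\delta=\delta(\varepsilon,\alpha,h)>0$ with
\begin{equation}
\phi(y) \;\le\; f_{\alpha,h}-\delta \qquad \text{for all } y\in[0,1],\;|y-u_0^*|\ge\varepsilon,
\end{equation}
by continuity of $\phi$ on the compact set $\{|y-u_0^*|\ge\varepsilon\}\cap[0,1]$. Summing over the $O(n^2)$ lattice points of $\Gamma_n$ lying at distance at least $\varepsilon$ from $u_0^*$ yields
\begin{equation}
\bar{\P}_{n;\alpha,h}\bigl(|Y_n-u_0^*|\ge\varepsilon\bigr) \;\le\; e^{-\tau n^2}
\end{equation}
for some $\tau=\tau(\varepsilon,\alpha,h)>0$ and all sufficiently large $n$, after absorbing polynomial and $\bar{f}_{n;\alpha,h}-f_{\alpha,h}=o(1)$ corrections into a smaller $\tau$.

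The conclusion is then routine. By Rem.~\ref{consist_cond} (applied to the mean-field measures) there is a common probability space on which all $Y_n$ are defined under $\bar{\P}_{\alpha,h}$; the exponential bound gives $\sum_n \bar{\P}_{\alpha,h}(|Y_n-u_0^*|\ge\varepsilon)<\infty$, so Borel--Cantelli yields $Y_n\to u_0^*$ almost surely, and thus $Y_n^3\to (u_0^*)^3$ almost surely. The principal obstacle is the derivation of the uniform gap $\delta$: one must know not only that $\phi$ has a unique maximizer in $\mathcal{U}^{rs}$ (which is provided by \cite{RY}, Prop.~3.2), but that the gap is genuinely positive away from $u_0^*$ including the boundary behavior of $I$ near $0$ and $1$; this is immediate from the continuity of $\phi$ on $[0,1]$ once one notes $I(0)=I(1)=0$ and the cubic/linear terms are bounded there, so no delicate tail analysis is required.
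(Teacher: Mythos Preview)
Your argument is correct, and it takes a more elementary route than the paper's. The paper establishes exponential concentration for $6\bar T_n/n^3$ by paralleling its treatment of the edge--triangle model: it rewrites $\bar{\mathbb P}_{n;\alpha,h}$ as an exponential tilt of the Erd\H{o}s--R\'enyi measure on $\Gamma_n$ and invokes the abstract LDP machinery of \cite[Thm.~II.7.2]{E} for tilted measures, thereby obtaining the bound $\bar{\mathbb P}_{n;\alpha,h}\bigl(6\bar T_n/n^3\notin J(\varepsilon)\bigr)\le e^{-n^2\tau}$ without ever writing down $\phi$ explicitly. You instead exploit the fact that the mean-field Hamiltonian is a function of the scalar $Y_n$, write out the one-dimensional law directly, apply a Stirling bound to $\mathcal N_m$ (this is exactly the rough estimate \eqref{Stirling-rough} used elsewhere in the paper), and extract the gap from continuity of $\phi$ on $[0,1]$ together with uniqueness of the maximizer on $\mathcal U^{rs}$. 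Your approach bypasses the LDP apparatus entirely and makes the rate-$n^2$ decay completely transparent; the paper's approach has the virtue of being uniform with the proof of Thm.~\ref{Thm_convergence_in_distribution}, so the mean-field and genuine edge--triangle results are seen to flow from the same mechanism. Both routes finish identically via Borel--Cantelli and the consistency of Rem.~\ref{consist_cond}.
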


\begin{theorem}\label{Thm_convergence_in_distribution_mf}
	For all $(\alpha,h) \in \mathcal{M}^{rs}$,
	\[
	\frac{6\bar{T}_n}{n^3} \, \xrightarrow{\;\;\mathrm{d}\;\;}{} \, \kappa \delta_{{u_1^{*}}^3(\alpha,h)}+
	(1-\kappa)\delta_{{u_2^{*}}^3(\alpha,h)}  \quad \text{ w.r.t. } \bar{\mathbb{P}}_{n;\alpha,h}, \text{ as } n \to +\infty,
	\]
	where $u_1^*$, $u_2^*$ solve the maximization problem in \eqref{free_energy}, and
\begin{equation}\label{kappa}	
\kappa:=\frac{
		\sqrt{\left[{1-2\alpha \left( u_1^*\right)^2(1-u_1^*)}\right]^{-1}}
	}
	{
		\sqrt{\left[{1-2\alpha \left( u_1^*\right)^2(1-u_1^*)}\right]^{-1}}
		+
		\sqrt{\left[{1-2\alpha \left( u_2^*\right)^2(1-u_2^*)}\right]^{-1}}
	} \,.
\end{equation}
\end{theorem}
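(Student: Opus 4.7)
The plan is to exploit the fact that in the mean-field model $\bar T_n = \tfrac{4}{3n^3} E_n^3$ is a deterministic function of $E_n$, so that $\tfrac{6\bar T_n}{n^3} = \bigl(\tfrac{2E_n}{n^2}\bigr)^3$. By the continuous mapping theorem applied to $x\mapsto x^3$, it suffices to prove the edge-density convergence
\[
\frac{2 E_n}{n^2} \xrightarrow{\;\mathrm{d}\;} \kappa\, \delta_{u_1^*} + (1-\kappa)\, \delta_{u_2^*} \qquad \text{w.r.t.\ } \bar{\P}_{n;\alpha,h}.
\]
Because the mean-field Hamiltonian \eqref{Hamilt_MF} depends on $x \in \cA_n$ only through $E_n(x)$, the law of $E_n$ is explicit: writing $N = \binom{n}{2}$,
\[
\bar{\P}_{n;\alpha,h}(E_n = m) \;=\; \frac{1}{\bar Z_{n;\alpha,h}}\binom{N}{m}\exp\!\Big(\tfrac{4\alpha m^3}{3 n^4}+h m\Big), \qquad m=0,\dots,N.
\]
Setting $u = 2m/n^2$ and applying Stirling's formula with its $1/\sqrt{2\pi N p(1-p)}$ prefactor for $\binom{N}{m}$ (with $p = m/N \to u$ and $N \sim n^2/2$) gives, uniformly on compact subsets of $(0,1)$,
\[
\bar{\P}_{n;\alpha,h}\!\Big(\tfrac{2 E_n}{n^2}=u\Big) \;=\; \frac{C_n}{\sqrt{u(1-u)}}\,\exp\!\bigl(n^2\,\phi(u)\bigr)\,(1+o(1)),
\]
where $\phi(u):=\tfrac{\alpha u^3}{6}+\tfrac{h u}{2}-\tfrac{I(u)}{2}$ is precisely the functional whose supremum equals $f_{\alpha,h}$ by \eqref{free_energy}, and $C_n$ is a normalising sequence.

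On $\mathcal{M}^{rs}$, $\phi$ has exactly two global maximisers $u_1^*, u_2^* \in (0,1)$ (both satisfying \eqref{FixPointEq} and with $\phi''(u_i^*) < 0$). The mean-field analogue of Thm.~\ref{Thm_convergence_in_distribution}, proved by the same energy--entropy comparison applied to the explicit density above, localises $\tfrac{2E_n}{n^2}$ in $(u_1^*-\delta,u_1^*+\delta)\cup (u_2^*-\delta,u_2^*+\delta)$ with probability $\geq 1-e^{-c(\delta)n^2}$, so the limiting weights are entirely determined by the ratio of the local masses at the two peaks. Converting the sum over the grid (spacing $2/n^2$) to an integral and applying Laplace's method in each neighbourhood yields, up to a factor independent of $i$,
\[
\text{mass near } u_i^* \;\propto\; \frac{1}{\sqrt{u_i^*(1-u_i^*)\,(-\phi''(u_i^*))}}.
\]

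Finally, $\phi''(u) = \alpha u - \tfrac{1}{2u(1-u)}$, so $-\phi''(u) = \tfrac{1-2\alpha u^2(1-u)}{2u(1-u)}$, and the Stirling prefactor recombines cleanly with the Laplace factor:
\[
\frac{1}{\sqrt{u(1-u)\,(-\phi''(u))}} \;=\; \frac{\sqrt{2}}{\sqrt{1-2\alpha u^2(1-u)}}.
\]
The common $\sqrt{2}$'s cancel in the ratio, yielding precisely formula~\eqref{kappa} for $\kappa$, with $1-\kappa$ the complementary mass at $u_2^*$; composition with $x\mapsto x^3$ gives the stated limit $\kappa\,\delta_{{u_1^*}^3}+(1-\kappa)\,\delta_{{u_2^*}^3}$. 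I expect the main obstacle to be the $O(1)$ bookkeeping: a purely exponential-order analysis only pins down the support of the limit, so the content of the theorem is precisely the explicit identification of the prefactors, which requires carefully retaining the $1/\sqrt{u(1-u)}$ correction from Stirling alongside the $1/\sqrt{-\phi''}$ factor from Laplace and tracking that all remaining $n$-dependent constants are identical at $u_1^*$ and $u_2^*$. Uniformity near $\{0,1\}$ causes no trouble since $u_1^*, u_2^*$ are bounded away from the boundary.
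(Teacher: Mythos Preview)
Your proposal is correct and follows essentially the same approach as the paper: both reduce to the explicit one-dimensional distribution of the edge density, apply Stirling's approximation retaining the $1/\sqrt{u(1-u)}$ prefactor, localise near the two maximisers $u_1^*,u_2^*$ of $\phi$, and read off the weights via a Laplace-type analysis combining the Stirling and Hessian factors into $\sqrt{[1-2\alpha u_i^2(1-u_i)]^{-1}}$. The paper packages the partition-function asymptotics into a cited lemma (Lem.~\ref{lemma_Z}) and works with test functions $\varphi$ directly on $m^3$ rather than invoking the continuous mapping theorem, but the substance is identical.
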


	\begin{theorem}[CLT for $\bar{T}_n$]\label{Thm_standard_CLT}
	If $(\a,h) \in \mathcal{U}^{rs}\setminus \{(\a_c,h_c)\}$, 
	\[	
	\sqrt{6} \, \frac{\frac{\bar{T}_n}{n} - \frac{n^{2}}{6}\bar m^{\Delta}_n(\a,h)}{n} \xrightarrow{\;\;\mathrm{d}\;\;}{} \mathcal{N}(0,\bar{v}_0^\Delta(\alpha,h)) \quad \text{ w.r.t. } \bar{\mathbb{P}}_{n;\alpha,h},\text{ as } n \to +\infty,
	\]
	where $\mathcal{N}(0,\bar{v}_0^\Delta(\alpha,h))$ is a centered Gaussian distribution with
	variance 	
	\begin{equation} \label{varCLT}
	\bar{v}_0^\Delta(\alpha,h) :=  \frac{3{u^*_0}^4(\alpha,h)}{4 c_0},
	\end{equation} 
being $c_0\equiv c_0(\alpha,h):= \frac{1-2\alpha[u^*_0(\alpha,h)]^2[1-u^*_0(\alpha,h)]}{4u^*_0(\alpha,h)[1-u^*_0(\alpha,h)]}$.
\end{theorem}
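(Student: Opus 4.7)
The plan is to exploit the fact that the mean-field Hamiltonian $\bar{\mathcal{H}}_{n;\alpha,h}$ depends on the graph only through the edge count $E_n$, which reduces the entire problem to a one-dimensional analysis. Writing $\bar{T}_n = \frac{n^3}{6}\bar{u}_n^3$ with $\bar{u}_n:=2E_n/n^2$, the claim is equivalent (up to the multiplicative factor $\sqrt{6}$) to the assertion that
\[
n\bigl(\bar{u}_n^3 - \bar{\mathbb{E}}_{n;\alpha,h}(\bar{u}_n^3)\bigr) \xrightarrow{\;\;\mathrm{d}\;\;}{} \mathcal{N}\bigl(0,\,6\,\bar{v}_0^\Delta(\alpha,h)\bigr),
\]
and via the delta method applied to $u\mapsto u^3$ at $u_0^*$ this in turn reduces to a local central limit theorem for $\bar{u}_n$ itself at scale $1/n$.

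To prove such a local CLT I would start from the explicit one-dimensional law
\[
\bar{\mathbb{P}}_{n;\alpha,h}(E_n=m) \;=\; \frac{1}{\bar{Z}_{n;\alpha,h}}\binom{N}{m}\exp\!\left(\frac{4\alpha m^3}{3n^4}+hm\right),\qquad N:=\binom{n}{2},
\]
apply Stirling's formula to the binomial, and rewrite the distribution of $p_n:=E_n/N$ in the Laplace form $\propto [p(1-p)]^{-1/2}\exp\!\bigl(N\psi_n(p)\bigr)$, where $\psi_n(p)\to \psi(p):= -I(p)+\tfrac{\alpha}{3}p^3+hp$ uniformly on compact subsets of $(0,1)$. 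Since $\psi$ is twice the integrand of \eqref{free_energy}, under the hypothesis $(\alpha,h)\in\mathcal{U}^{rs}\setminus\{(\alpha_c,h_c)\}$ it admits a unique non-degenerate maximum at $u_0^*$, with
\[
\psi''(u_0^*) \;=\; -\frac{1-2\alpha(u_0^*)^2(1-u_0^*)}{u_0^*(1-u_0^*)} \;=\; -4c_0 \;<\; 0.
\]
A standard saddle-point analysis in the Ellis--Newman / Curie--Weiss style---exponential concentration of $p_n$ near $u_0^*$ (a consequence of the LDP underlying Thm.~\ref{Thm_LLN_mf}) together with a Taylor expansion of $\psi_n$ inside a shrinking mesoscopic window $|p-u_0^*|\le n^{-1/2+\varepsilon}$---then yields $\sqrt{N}\,(p_n-u_0^*)\Rightarrow \mathcal{N}(0,1/(4c_0))$ and, using $\bar{u}_n = p_n(1-1/n)$ together with $n^2/(2N)\to 1$, the desired $n(\bar{u}_n-\bar{\mu}_n)\Rightarrow \mathcal{N}(0,1/(2c_0))$ after recentering by the true mean $\bar{\mu}_n := \bar{\mathbb{E}}_{n;\alpha,h}(\bar{u}_n)$ (which absorbs the $O(1/n)$ deterministic shift between $\bar{\mu}_n$ and $u_0^*$).

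The final step is the delta method at $u_0^*$, which gives $n(\bar{u}_n^3-\bar{\mu}_n^3)\Rightarrow \mathcal{N}\bigl(0,\,9(u_0^*)^4/(2c_0)\bigr)$; a short second-moment computation,
\[
\bar{\mathbb{E}}_{n;\alpha,h}(\bar{u}_n^3)=\bar{\mu}_n^3+3\bar{\mu}_n\,\mathrm{Var}(\bar{u}_n)+O(1/n^3)=\bar{\mu}_n^3+O(1/n^2),
\]
shows that replacing $\bar{\mu}_n^3$ with $\bar{\mathbb{E}}_{n;\alpha,h}(\bar{u}_n^3)$ introduces only an $O(1/n)$ deterministic correction, negligible on our scale. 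Translating back via $\bar{T}_n = (n^3/6)\bar{u}_n^3$ and multiplying by $\sqrt{6}/n$ reproduces the variance $\bar{v}_0^\Delta(\alpha,h) = 3(u_0^*)^4/(4c_0)$ of the claim. The main technical obstacle I anticipate is the uniform Laplace approximation inside the mesoscopic window together with the matching superpolynomial tail bound outside: both rely crucially on the strict non-degeneracy $\psi''(u_0^*)<0$, which fails exactly at $(\alpha_c,h_c)$ (whence the non-standard scaling of Thm.~\ref{Thm_non-standard_CLT}) and across $\mathcal{M}^{rs}$ (where two competing maximizers appear, giving Thm.~\ref{Thm_convergence_in_distribution_mf}).
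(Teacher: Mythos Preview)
Your proposal is correct and follows essentially the same route as the paper: reduce to the one-dimensional law of the edge density $m=2E_n/n^2$, apply Stirling to write the weights as $[m(1-m)]^{-1/2}\exp(n^2 g_{\alpha,h}(m))$, and carry out a Laplace/saddle-point analysis in a mesoscopic window around the unique non-degenerate maximizer $u_0^*$, where $g''_{\alpha,h}(u_0^*)=-2c_0<0$. The only packaging difference is that the paper computes the moment generating function of $n(m^3-{u_0^*}^3)$ directly (expanding $(u_0^*+x/n)^3-{u_0^*}^3=3{u_0^*}^2 x/n+O(x^2/n^2)$ inside the exponent and sandwiching), whereas you first establish a CLT for $n(m-\bar\mu_n)$ and then apply the delta method to $u\mapsto u^3$; the two are equivalent since the higher-order terms in the cubic expansion vanish on the scale $1/n$. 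For the recentering from ${u_0^*}^3$ to $\bar m_n^\Delta=\bar{\mathbb{E}}(\bar u_n^3)$, the paper invokes its Corollary~\ref{cor_speed_mf} (itself a byproduct of the same Laplace computation), while your moment expansion $\bar{\mathbb{E}}(\bar u_n^3)=\bar\mu_n^3+O(1/n^2)$ achieves the same end; note that your step also uses the Laplace byproduct $\bar{\mathbb{E}}(p_n)-u_0^*=O(1/n^2)$, which you should state explicitly.
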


\begin{theorem}
	[Non-standard CLT for $\bar{T}_n$]\label{Thm_non-standard_CLT}
	If $(\a,h)=(\alpha_c,h_c)$,
	\[	
	6 \, \frac{\frac{\bar{T}_n}{n} - \frac{n^{2}}{6}\bar m^{\Delta}_n(\a_c,h_c)}{n^{3/2}} \xrightarrow{\;\;\mathrm{d}\;\;}{} \bar Y \quad \text{ w.r.t. } \bar{\mathbb{P}}_{n;\alpha_c,h_c},\text{ as } n \to +\infty,
	\]
	where $\bar Y$ is a generalized Gaussian random variable with Lebesgue density $\bar{\ell}^c(y)\propto e^{-\frac{3^8}{2^{14}}y^{4}}$.	
\end{theorem}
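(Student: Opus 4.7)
The plan is to reduce the statement to a fluctuation theorem for the edge density $U_n := 2E_n/n^{2} \in \Gamma_n$. Since $\bar{\mathcal{H}}_{n;\alpha,h}$ depends on the configuration only through $E_n$, the identity $\bar{T}_n = \frac{n^{3}}{6}U_n^{3}$ gives
\[
\frac{6}{n^{5/2}}\bigl[\bar{T}_n - \bar{\mathbb{E}}_{n;\alpha_c,h_c}(\bar{T}_n)\bigr] = n^{1/2}\bigl[U_n^{3} - \bar{\mathbb{E}}_{n;\alpha_c,h_c}(U_n^{3})\bigr].
\]
A delta-method expansion of $u \mapsto u^{3}$ around $u_c^{*} = 2/3$ shows that, once $n^{1/2}(U_n - u_c^{*})$ is seen to be tight, the quadratic and cubic remainders contribute $O_{\bar{\mathbb{P}}}(n^{-1/2})$, so the limit law of the target quantity coincides with that of $3{u_c^{*}}^{2} \cdot n^{1/2}(U_n - u_c^{*}) = \tfrac{4}{3}\, n^{1/2}(U_n - u_c^{*})$. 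The problem therefore reduces to proving a non-standard CLT for the edge density.

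For the fluctuations of $U_n$, I would start from the exact representation
\[
\bar{\mathbb{P}}_{n;\alpha,h}(U_n = 2k/n^{2}) = \frac{1}{\bar{Z}_{n;\alpha,h}}\binom{N}{k}\exp\!\Bigl(\frac{4\alpha\,k^{3}}{3n^{4}} + hk\Bigr), \qquad N := \binom{n}{2},
\]
and apply Stirling's formula uniformly on compact neighbourhoods of $u_c^{*}$, obtaining
\[
\bar{\mathbb{P}}_{n;\alpha_c,h_c}(U_n = u) = \frac{C_n}{\sqrt{u(1-u)}}\exp\!\bigl(n^{2}F(u) + O(1)\bigr),
\]
with $F(u) := \frac{\alpha_c}{6}u^{3} + \frac{h_c}{2}u - \frac{1}{2}I(u)$ the functional from \eqref{free_energy}. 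Because $(\alpha_c,h_c)$ is a second-order critical point with unique maximiser $u_c^{*}$, direct differentiation yields $F'(u_c^{*}) = F''(u_c^{*}) = F'''(u_c^{*}) = 0$ and $F''''(u_c^{*}) = -243/8$, so the Taylor expansion degenerates to
\[
F(u_c^{*} + y) - F(u_c^{*}) = -\tfrac{81}{64}\,y^{4} + O(y^{5}).
\]

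Setting $Y_n := n^{1/2}(U_n - u_c^{*})$, the leading term of the rescaled exponent becomes $-\tfrac{81}{64}\,Y_n^{4}$ while the quintic error is $n^{2}\cdot O(n^{-5/2}) = O(n^{-1/2})$, and the lattice spacing becomes $2/n^{3/2}$. Testing against bounded continuous functions and using the exponential concentration bound $\bar{\mathbb{P}}_{n;\alpha_c,h_c}(|U_n - u_c^{*}|>\delta) \leq e^{-\tau n^{2}}$ (a consequence of the large-deviation principle for $U_n$ together with the uniqueness of the maximiser at the critical point) to neglect the contribution from outside a shrinking window, one obtains $Y_n \xrightarrow{\mathrm{d}} Z$, where $Z$ has Lebesgue density proportional to $e^{-\frac{81}{64}z^{4}}$.

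Putting the two reductions together, the target quantity converges in distribution to $\tfrac{4}{3}Z$; by the change of variables $y = \tfrac{4}{3}z$ its Lebesgue density is proportional to $\exp\!\bigl(-\tfrac{81}{64}(3y/4)^{4}\bigr) = \exp\!\bigl(-\tfrac{3^{8}}{2^{14}}\, y^{4}\bigr)$, exactly matching the density claimed for $\bar Y$. The main technical obstacle will be the local limit step: the Stirling expansion and quartic Taylor expansion must be controlled uniformly on a rescaled window $|U_n - u_c^{*}| \leq n^{-1/2+\epsilon}$, and the finite-$n$ centering $\bar{\mathbb{E}}_{n;\alpha_c,h_c}(\bar{T}_n)$ must be shown to differ from the deterministic approximation $n^{3}{u_c^{*}}^{3}/6$ by only $o(n^{5/2})$, so that it can be absorbed into the linearization remainder.
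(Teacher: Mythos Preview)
Your proposal is correct and follows essentially the same route as the paper: decompose into a fluctuation around ${u_c^*}^3$ plus the shift $\sqrt{n}\bigl({u_c^*}^3-\bar m^{\Delta}_n(\alpha_c,h_c)\bigr)$, handle the shift via the $\sqrt{n}$-rate convergence of the mean (this is Corollary~\ref{cor_speed_mf}, proved just before), and analyze the remaining piece by Stirling plus the quartic Taylor expansion $g_{\alpha_c,h_c}(m)-g_{\alpha_c,h_c}(u_c^*)=-\tfrac{81}{64}(m-u_c^*)^4+O((m-u_c^*)^5)$ under the rescaling $y=\sqrt{n}(m-u_c^*)$. The only tactical differences are that the paper works directly with the moment generating function of $\sqrt{n}(m^3-{u_c^*}^3)$ and controls the nonlinear remainder $2y^2/\sqrt{n}+y^3/n$ by a sandwich bound on the window $|y|\le n^{1/2-\delta}$ with $\tfrac14<\delta<\tfrac38$, rather than invoking tightness plus a separate delta-method step; your flagged ``main technical obstacle'' about the finite-$n$ centering is exactly what Corollary~\ref{cor_speed_mf} supplies.
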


\begin{proposition}\label{prop_speed_mmf}
	For all $(\alpha,h) \in \mathcal{U}^{rs}\setminus \{(\a_c,h_c)\}$,
	\begin{equation}\label{speed_mmf1}
		\lim_{n\to+\infty}
		n \cdot\bar{\mathbb{E}}_{n;\alpha,h}\left( \left\vert \frac{6\bar{T}_n}{n^3} - {u_0^*}^3(\a,h) \right\vert\right)
		=\E(|\bar{X}|)\,,
	\end{equation}
	where $\bar{X}$ is a centered Gaussian random variable with
	variance $6\bar{v}^{\Delta}_0 (\alpha,h)=\frac{9{u^*_0}^4(\alpha,h)}{2 c_0}$, \newline 
	being $c_0\equiv c_0(\alpha,h)= \frac{1-2\alpha[u^*_0(\alpha,h)]^2[1-u^*_0(\alpha,h)]}{4u^*_0(\alpha,h)[1-u^*_0(\alpha,h)]}>0$.
	Moreover, at the critical point
	\begin{equation}\label{speed_mmf2}
		\lim_{n\to+\infty}
		\sqrt{n}\cdot\bar{\mathbb{E}}_{n;\alpha_c,h_c}
		\left( \left\vert \frac{6\bar{T}_n}{n^3} - {u^*}^3(\a_c,h_c) \right\vert\right)
		=\E(|\bar Y|)\,,
	\end{equation}
	where $\bar Y$ is a generalized Gaussian random variable with Lebesgue density
	$\bar\ell^c(y)\propto e^{-\frac{3^8}{2^{14}}y^4}$.
\end{proposition}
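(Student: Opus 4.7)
The plan is to deduce the proposition from Theorems \ref{Thm_standard_CLT} and \ref{Thm_non-standard_CLT} by a uniform-integrability-plus-centering argument. For the non-critical case I would set $\xi_n := n\bigl(\tfrac{6\bar T_n}{n^3}-\bar m_n^\Delta(\alpha,h)\bigr)$, so that Theorem \ref{Thm_standard_CLT} directly gives $\xi_n \xrightarrow{\mathrm d} \bar X$. Two tasks then remain: (i) upgrade the weak convergence to $L^1$ convergence $\bar{\mathbb{E}}_{n;\alpha,h}|\xi_n|\to\mathbb{E}|\bar X|$, and (ii) replace the stochastic centering $\bar m_n^\Delta(\alpha,h)$ by the deterministic ${u_0^*}^3(\alpha,h)$.

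For (i), differentiating $\bar f_{n;\alpha,h}$ twice in $\alpha$ and using the variance identities after \eqref{var_rel} produces $\bar{\mathbb{E}}(\xi_n^2) = 6\,\bar v_n^\Delta(\alpha,h)$. Since $\bar f_{n;\alpha,h}$ is convex in $\alpha$ and converges pointwise to $\bar f_{\alpha,h}=f_{\alpha,h}$ by \eqref{equiv_fren}, with $f_{\alpha,h}$ being $C^2$ on $\mathcal U^{rs}\setminus\{(\alpha_c,h_c)\}$, a standard convexity argument forces $\bar v_n^\Delta \to \bar v_0^\Delta<\infty$, hence $\sup_n \bar{\mathbb{E}}(\xi_n^2)<\infty$ and (i) follows by Vitali's theorem. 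For (ii), the elementary inequality $\bigl||a|-|b|\bigr|\leq|a-b|$ gives
$$\bigl|\,n\,\bar{\mathbb{E}}\bigl|\tfrac{6\bar T_n}{n^3}-{u_0^*}^3\bigr| \,-\, \bar{\mathbb{E}}|\xi_n|\,\bigr| \;\leq\; n\,\bigl|\bar m_n^\Delta(\alpha,h)-{u_0^*}^3(\alpha,h)\bigr|,$$
so it suffices to show $n|\bar m_n^\Delta-{u_0^*}^3|\to 0$. Setting $\nu_n := E_n/n^2 - u_0^*/2$ and expanding $\bar T_n = \tfrac{4}{3n^3}E_n^3$ around $u_0^*/2$ expresses $\bar m_n^\Delta - {u_0^*}^3$ as a linear combination of the centered moments $\bar{\mathbb{E}}(\nu_n^k)$, $k=1,2,3$. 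The second-moment bound from (i) gives $\bar{\mathbb{E}}(\nu_n^2) = O(n^{-2})$, and a Laplace-type expansion of the marginal law of $E_n$ — which is tractable precisely because the mean-field Hamiltonian \eqref{Hamilt_MF} depends on $G$ only through $E_n(G)$ — controls $\bar{\mathbb{E}}(\nu_n)$ and $\bar{\mathbb{E}}(\nu_n^3)$ at the same order, yielding $\bar m_n^\Delta - {u_0^*}^3 = O(n^{-2})$ and closing \eqref{speed_mmf1}.

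The critical case \eqref{speed_mmf2} is handled by the same scheme with $\sqrt n$ in place of $n$ and $\bar Y$ in place of $\bar X$: Theorem \ref{Thm_non-standard_CLT} supplies the weak limit, uniform integrability comes from an $L^2$-type moment bound adapted to the quartic-tailed density $\bar\ell^c$, and the corresponding Laplace expansion of $\bar Z_{n;\alpha_c,h_c}$ is performed around the degenerate fourth-order minimum of the rate function at $u=2/3$, yielding $\sqrt n\,|\bar m_n^\Delta(\alpha_c,h_c) - 8/27|\to 0$. I expect the centering step to be the main obstacle throughout: the CLTs themselves do not furnish sharp enough information on $\bar m_n^\Delta$, which must be extracted via an independent asymptotic analysis of the one-dimensional integral representing $\bar Z_n$, and at the critical point the vanishing Hessian forces non-Gaussian (quartic) Laplace asymptotics rather than the usual Gaussian ones.
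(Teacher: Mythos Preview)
Your plan has two genuine problems.

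First, there is a logical circularity with respect to the paper. Theorems~\ref{Thm_standard_CLT} and~\ref{Thm_non-standard_CLT} are proved \emph{after} Proposition~\ref{prop_speed_mmf}, and their proofs invoke Corollary~\ref{cor_speed_mf} (that is, your step~(ii)) via Slutsky's theorem. So you cannot treat the CLTs as black boxes here. You are right that step~(ii) must come from an independent Laplace expansion of the one-dimensional sum for $\bar Z_{n;\alpha,h}$; but once that machinery is in place (restriction to a window $B_{u_0^*}$, Stirling, Taylor expansion of $g_{\alpha,h}$, change of variable $x=n(m-u_0^*)$), carrying the extra factor $|m^3-{u_0^*}^3|$ through the same computation yields Proposition~\ref{prop_speed_mmf} directly. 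That is exactly the paper's route: it proves Prop.~\ref{prop_speed_mmf} and Cor.~\ref{cor_speed_mf} first by Laplace, and only then derives the CLTs. Your ordering reverses the dependencies and adds the CLT and UI steps on top of the same hard work, rather than in place of it.

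Second, the uniform-integrability step~(i) rests on a claim that is not valid. Pointwise convergence of the convex functions $\bar f_{n;\alpha,h}\to f_{\alpha,h}$ does give convergence of first derivatives (this is Lemma~\ref{ellis}), but there is no ``standard convexity argument'' forcing convergence, or even boundedness, of the second derivatives $\bar v_n^\Delta=6\,\partial_\alpha^2\bar f_{n;\alpha,h}$. A simple counterexample: $f_n(t)=\int_0^t\int_0^s\big(1+n\,\varphi(n^2 r)\big)\,dr\,ds$ with $\varphi$ a smooth nonnegative bump of unit mass is convex, converges in $C^1$ to $t^2/2$, yet $f_n''(0)\to\infty$. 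To obtain $\sup_n\bar{\mathbb E}(\xi_n^2)<\infty$ you would again need the Laplace expansion of $\bar Z_{n;\alpha,h}$, at which point the direct proof is already done.
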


\begin{corollary}\label{cor_speed_mf}
	For all $(\alpha,h) \in \mathcal{U}^{rs}\setminus \{(\a_c,h_c)\}$, we have
	\begin{equation}\label{speed_mf1}
		\lim_{n\to+\infty} n \cdot (\bar m^{\Delta}_n(\a,h)-{u_0^*}^3(\a,h)) =0\,,
	\end{equation}
	while for $(\a,h)=(\a_c,h_c)$ 
	\begin{equation}\label{speed_mf2}
		\lim_{n\to+\infty} \sqrt{n}\cdot(\bar m^{\Delta}_n(\a_c,h_c)-{u^*}^3(\a_c,h_c)) =0\,.
	\end{equation}
\end{corollary}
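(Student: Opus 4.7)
The plan is to deduce both \eqref{speed_mf1} and \eqref{speed_mf2} directly from Prop.~\ref{prop_speed_mmf}, using Thm.~\ref{Thm_standard_CLT} off the critical point and Thm.~\ref{Thm_non-standard_CLT} at $(\alpha_c,h_c)$, by combining the $L^{1}$-convergence there with the corresponding weak-convergence statement via a Fatou-type argument. I will present the reasoning for \eqref{speed_mf1}; the critical case is identical up to replacing the scaling $n$ by $\sqrt n$ and the Gaussian $\bar X$ by the generalized Gaussian $\bar Y$.

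Setting $Y_n:=6\bar T_n/n^3$, $\mu:={u_0^*}^3(\alpha,h)$, $m_n:=\bar m^\Delta_n(\alpha,h)=\bar{\E}_{n;\alpha,h}(Y_n)$ and $\nu_n:=n(m_n-\mu)$, the first step is to rewrite Thm.~\ref{Thm_standard_CLT} in the equivalent form $W_n:=n(Y_n-m_n)\xrightarrow{d}\bar X\sim\cN\bigl(0,6\bar v_0^\Delta(\alpha,h)\bigr)$, and to observe that Prop.~\ref{prop_speed_mmf} reads
\[
\bar{\E}_{n;\alpha,h}\bigl|W_n+\nu_n\bigr|\;=\;n\,\bar{\E}_{n;\alpha,h}\bigl|Y_n-\mu\bigr|\;\longrightarrow\;\E|\bar X|.
\]
Hence the goal reduces to showing that these two facts force $\nu_n\to 0$.

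The core of the argument is the following observation. For any subsequential limit $\nu_\infty\in[-\infty,+\infty]$ of $(\nu_{n_k})$, Skorohod coupling and Fatou's lemma yield $\liminf_k \E|W_{n_k}+\nu_{n_k}|\geq \E|\bar X+\nu_\infty|$, interpreted as $+\infty$ when $|\nu_\infty|=+\infty$. This immediately rules out $|\nu_{n_k}|\to\infty$, so $(\nu_n)$ is bounded. For a finite $\nu_\infty$, Slutsky's theorem gives $W_{n_k}+\nu_{n_k}\xrightarrow{d}\bar X+\nu_\infty$, and the inequality becomes $\E|\bar X+\nu_\infty|\leq \E|\bar X|$. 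Since $\bar X$ has a continuous, symmetric, strictly positive density on $\R$, the map $c\mapsto \E|\bar X+c|$ is convex with unique minimum at $c=0$ (its derivative equals $2\P(\bar X>-c)-1$, which vanishes only at $c=0$), forcing $\nu_\infty=0$. Standard subsequence extraction then yields $\nu_n\to 0$.

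The critical case \eqref{speed_mf2} proceeds identically, with $\sqrt n(Y_n-m_n)\xrightarrow{d}\bar Y$ from Thm.~\ref{Thm_non-standard_CLT} in place of the Gaussian CLT and $\nu_n:=\sqrt n(m_n-\mu)$; the density $\bar\ell^c(y)\propto e^{-3^8 y^4/2^{14}}$ is again continuous, symmetric and everywhere positive, so the uniqueness of its $L^1$-median at $0$ drives the same conclusion. The step I expect to require the most care is precisely the Fatou-type lower semicontinuity argument paired with the strict-convexity/uniqueness property of $c\mapsto\E|\bar X+c|$ (resp.\ $c\mapsto \E|\bar Y+c|$); the remainder is routine rearrangement of statements already available.
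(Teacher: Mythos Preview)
Your argument is mathematically sound in isolation, but in the context of this paper it is circular. The proofs of Thm.~\ref{Thm_standard_CLT} and Thm.~\ref{Thm_non-standard_CLT} in the paper both begin with the decomposition
\[
\sqrt{6}\,\frac{\frac{\bar T_n}{n}-\frac{n^2}{6}\bar m^\Delta_n(\alpha,h)}{n}
=\bar V_n+\frac{n}{\sqrt 6}\bigl({u_0^*}^3-\bar m^\Delta_n(\alpha,h)\bigr),
\]
(and its critical analogue) and then explicitly invoke \eqref{speed_mf1} and \eqref{speed_mf2}, i.e.\ Cor.~\ref{cor_speed_mf}, together with Slutsky, to dispose of the shift term. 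Thus in the paper's logical order one has Prop.~\ref{prop_speed_mmf} $\Rightarrow$ Cor.~\ref{cor_speed_mf} $\Rightarrow$ Thms.~\ref{Thm_standard_CLT}, \ref{Thm_non-standard_CLT}; you are trying to run the implication the other way.

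The paper's own proof of Cor.~\ref{cor_speed_mf} is much more elementary and avoids the CLTs entirely: one simply reruns the computation behind Prop.~\ref{prop_speed_mmf} without the absolute value. Since $\bar m^\Delta_n(\alpha,h)=\bar\E_{n;\alpha,h}(6\bar T_n/n^3)$, the same splitting over $B_{u_0^*}$ and its complement, the same Stirling/Taylor expansions, and the same change of variable $x=n(m-u_0^*)$ give
\[
n\bigl(\bar m^\Delta_n(\alpha,h)-{u_0^*}^3\bigr)
=\E\bigl(3{u_0^*}^2 X_n\bigr)(1+o(1))+o(1)
\;\xrightarrow{\;n\to+\infty\;}\;3{u_0^*}^2\,\E(X)=0,
\]
because $X_n\xrightarrow{d}X$ with $X$ centered Gaussian and the $X_n$ are uniformly integrable (they have bounded exponential moments). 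The critical case is identical with $Y_n\to Y$, $\bar Y=\tfrac43 Y$ centered. No Skorohod coupling, Fatou, or median-uniqueness argument is needed; the linearity of expectation does all the work once the absolute value is dropped.
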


\paragraph{Generalization to a clique graph.}
Precisely as we approximated the (random) number of triangles $T_n$  by $\bar{T}_n \, = \,\frac{4 \left( \sum_{i\in\cE_n} X_i\right)^3}{3 n^3}$, we can approximate the random number of cliques with $\ell\geq 3$ vertices, by \footnote{For a fixed graph $G$ with $n$ vertices, this is equivalent to say that we approximate the number of cliques by $\bar{K}_n(G)=\frac{n^{\ell}(t(H_1,G))^{\binom{\ell}{2}}}{\aut}$. If $\ell=3$ we recover the definition of $\bar{T}_n(G)$.}
	\begin{equation}\label{Kn}
		\bar{K}_n:=\left(\frac{2E_n}{n^2}\right)^{\binom{\ell}{2}} \cdot \frac{n^\ell}{\aut}= \left(\frac{2 \sum_{i\in\cE_n} X_i}{n^2}\right)^{\binom{\ell}{2}} \cdot \frac{n^\ell}{\aut}.
	\end{equation}
 We then recover the following generalizations of Thms~\ref{Thm_LLN_mf}--\ref{Thm_non-standard_CLT}.
 
\begin{theorem}\label{Thm_LLN_mf_g}
Fix $\ell\in\mathbb{N}$, with $\ell\geq 3$. For all $(\alpha,h)\in\mathcal{U}^{rs}$, 
	\begin{equation}\label{SLLN_gen}
	\frac{\aut \bar{K}_n}{n^{\ell}} \, \xrightarrow{\;\;\mathrm{a.s.}\;\;}{} \, {u^{*}_0}^{\binom{\ell}{2}}(\alpha,h) \quad \text{ w.r.t. } \bar{\mathbb{P}}_{\alpha,h}, \text{ as } n \to +\infty,
	\end{equation}
where $u^*_0$ solves the maximization problem in \eqref{free_energy}.
For all $(\alpha,h) \in \mathcal{M}^{rs}$,
	\begin{equation}\label{conv_distr_gen}
	\frac{\aut \bar{K}_n}{n^{\ell}} \, \xrightarrow{\;\;\mathrm{d}\;\;}{} \, \kappa \delta_{u_{1,\ell}^{*}(\alpha,h)}+
	(1-\kappa)\delta_{u_{2,\ell}^{*}(\alpha,h)}  \quad \text{ w.r.t. } \bar{\mathbb{P}}_{n;\alpha,h}, \text{ as } n \to +\infty,
	\end{equation}
where $u_{1,\ell}^{*}:={u_1^{*}}^{\binom{\ell}{2}}(\alpha,h)$, $u_{2,\ell}^{*}:={u_2^{*}}^{\binom{\ell}{2}}(\alpha,h)$, $u_1^*$, $u_2^*$ solve the maximization problem in \eqref{free_energy}, and
$\kappa$ is given in \eqref{kappa}.
\end{theorem}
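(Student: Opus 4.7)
The plan is to deduce Thm.~\ref{Thm_LLN_mf_g} from Thms.~\ref{Thm_LLN_mf} and \ref{Thm_convergence_in_distribution_mf} via the continuous mapping theorem, exploiting the fact that, under the mean-field Hamiltonian \eqref{Hamilt_MF}, the quantity $\bar K_n$ is a deterministic continuous function of $E_n$ (and hence of $\bar T_n$).

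The starting observation is that by combining \eqref{def:bT_n} and \eqref{Kn},
\begin{equation}
\frac{2E_n}{n^2}=\left(\frac{6\bar T_n}{n^3}\right)^{1/3}\in[0,1], \qquad \frac{\ell!\,\bar K_n}{n^\ell}=\left(\frac{2E_n}{n^2}\right)^{\binom{\ell}{2}}=\Phi\!\left(\frac{6\bar T_n}{n^3}\right),
\end{equation}
where $\Phi:[0,1]\to[0,1]$, $\Phi(x):=x^{\binom{\ell}{2}/3}$, is continuous on the whole range of the random variable $\frac{6\bar T_n}{n^3}$. In particular $\Phi(u^{*3})=u^{*\binom{\ell}{2}}$ for every optimizer $u^*\in[0,1]$.

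For the SLLN \eqref{SLLN_gen}, I would invoke Thm.~\ref{Thm_LLN_mf}, which asserts that on $\mathcal{U}^{rs}$ one has $\frac{6\bar T_n}{n^3}\to u_0^{*3}(\alpha,h)$ $\bar{\mathbb{P}}_{\alpha,h}$-almost surely. Applying the continuous map $\Phi$ pointwise on the sample paths on which the convergence holds yields $\frac{\ell!\,\bar K_n}{n^{\ell}}\to u_0^{*\binom{\ell}{2}}(\alpha,h)$ a.s., which is exactly \eqref{SLLN_gen}. For the limit-in-distribution statement \eqref{conv_distr_gen}, I would use Thm.~\ref{Thm_convergence_in_distribution_mf}, which provides
\[
\frac{6\bar T_n}{n^3}\;\xrightarrow{d}\;\kappa\,\delta_{u_1^{*3}(\alpha,h)}+(1-\kappa)\,\delta_{u_2^{*3}(\alpha,h)} \quad \text{w.r.t. }\bar{\mathbb{P}}_{n;\alpha,h},
\]
with $\kappa$ as in \eqref{kappa}. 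Since $\Phi$ is continuous on $[0,1]$, the classical continuous mapping theorem gives the corresponding convergence in distribution of $\frac{\ell!\,\bar K_n}{n^{\ell}}$ to $\kappa\,\delta_{\Phi(u_1^{*3})}+(1-\kappa)\,\delta_{\Phi(u_2^{*3})}=\kappa\,\delta_{u_{1,\ell}^*}+(1-\kappa)\,\delta_{u_{2,\ell}^*}$, which is \eqref{conv_distr_gen}.

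There is essentially no hard step: the whole content of the generalization is packaged in the fact that a clique count computed through the mean-field recipe \eqref{Kn} is a smooth function of $E_n/n^2$, so once the asymptotic law of the edge density has been pinned down (which is what Thms.~\ref{Thm_LLN_mf} and \ref{Thm_convergence_in_distribution_mf} do, up to a cube root) every quantity of this form follows automatically. No fresh large-deviation estimate, Gibbs-measure computation, or identification of the mixing weight $\kappa$ is required, and the limit weights are inherited verbatim because $\Phi$ is a bijection on $[0,1]$ and therefore preserves the two atoms separately.
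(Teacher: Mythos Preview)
Your argument is correct. For the SLLN \eqref{SLLN_gen} your proof coincides with the paper's: both invoke Thm.~\ref{Thm_LLN_mf} and then apply the continuous mapping theorem to pass from $(2E_n/n^2)^3$ to $(2E_n/n^2)^{\binom{\ell}{2}}$.

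For \eqref{conv_distr_gen} there is a minor methodological difference. The paper does not apply the continuous mapping theorem to the conclusion of Thm.~\ref{Thm_convergence_in_distribution_mf}; instead it reruns the proof of that theorem, replacing $\varphi(m^3)$ by $\varphi(m^{\binom{\ell}{2}})$ in the expectation \eqref{media} and noting that the Gibbs weight and the splitting over $B_{u_1^*}$, $B_{u_2^*}$, $\mathcal{C}$ are unchanged. Your route is shorter and fully legitimate: since $\Phi(x)=x^{\binom{\ell}{2}/3}$ is continuous on $[0,1]$ (the exponent is $\ge 1$ for $\ell\ge 3$), the push-forward of the two-atom limit law is immediate, and the weight $\kappa$ is inherited without any further computation. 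The paper's version has the small advantage of being self-contained (it does not need to pass through the cube root), but the two arguments are equivalent in content.
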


	\begin{theorem}[Generalized CLT w.r.t. $\bar{\mathbb{P}}_{n;\alpha,h}$]\label{generalized_Thm_standard_CLT}
Let $\bar{m}^{K}_n(\a,h):=\frac{\aut\bar{\E}_{n;\a,h}\left(\frac{\bar{K}_n}{n^{\ell-2}}\right)}{n^2}$, with $\ell\in\mathbb{N}$, $\ell\geq 3$. If $(\a,h) \in \mathcal{U}^{rs}\setminus \{(\a_c,h_c)\}$, then
		\[	
		\sqrt{\aut} \, \frac{\frac{\bar{K}_n}{n^{\ell-2}} - \frac{n^{2}}{\aut}\bar m_n^{K}(\a,h)}{n} \xrightarrow{\;\;\mathrm{d}\;\;}{} \mathcal{N}(0,\bar v^{K}(\alpha,h)) \quad \text{ w.r.t. } \bar{\mathbb{P}}_{n;\alpha,h},\text{ as } n \to +\infty,
		\]
		where $\mathcal{N}(0,\bar v^{K}(\alpha,h))$ is a centered Gaussian distribution with
		variance 	
		\[
		\bar v^{K}(\alpha,h) := \Big(\tbinom{\ell}{2}{u_0^*}^{\binom \ell2-1}\Big)^2 (2\aut c_0)^{-1} = \frac{\big(\tbinom{\ell}{2}{u^*}^{\binom{\ell}{2}-1}\big)^2}{2\aut}
		\frac{4u^*_0(1-u^*_0)}{1-2\alpha {u^*_0}^2(1-u_0^*)}.
		\]
 If $(\a,h)=\{(\a_c,h_c)\}$, then
$$
\aut \, \frac{\frac{\bar{K}_n}{n^{\ell-2}} - \frac{n^{2}}{\aut}\bar m^{K}_n(\a_c,h_c)}{n^{3/2}} \xrightarrow{\;\;\mathrm{d}\;\;}{} \bar W \quad \text{ w.r.t. } \bar{\mathbb{P}}_{n;\alpha_c,h_c},\text{ as } n \to +\infty,
$$
where $\bar W$ is a generalized Gaussian random variable with Lebesgue density $\bar{\ell}_{K}^c(y)\propto e^{-(\gamma y)^{4}}$, with $\gamma:= \frac{2^{\binom{\ell}{2} + 1/2}}{3^{\binom{\ell}{2}}}\binom{\ell}{2}$.	

\end{theorem}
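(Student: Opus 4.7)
Since the mean-field measure depends on the configuration only through $E_n$, the approximated clique count has the one-dimensional representation
\[
\bar K_n \;=\; \frac{n^\ell}{\ell!}\,u_n^{\binom{\ell}{2}}, \qquad u_n := \frac{2E_n}{n^2}.
\]
My plan is to establish the standard and non-standard CLT for the edge density $u_n$ itself, and then to transfer those to $u_n^{\binom{\ell}{2}}$ via the delta method applied to $g(u):=u^{\binom{\ell}{2}}$. This is exactly the scheme of Thms.~\ref{Thm_standard_CLT}--\ref{Thm_non-standard_CLT}, which correspond to $g(u)=u^3$; the two proofs should differ only in the value of $g'$ at $u^*_0$ (resp.\ $u_c^*$) and in straightforward bookkeeping of constants.

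Off the critical point, Stirling's formula applied to $\binom{\binom{n}{2}}{E_n}$, together with \eqref{H_mf}, shows that $\bar{\mathbb{P}}_{n;\alpha,h}(u_n\in du)\propto \exp\bigl(n^2\phi(u)\bigr)\,du$, where $\phi(u)=\tfrac{\alpha}{6}u^3+\tfrac{h}{2}u-\tfrac12 I(u)$ is the functional maximized in \eqref{free_energy}. Laplace's method around the unique maximizer $u_0^*$, combined with the identity $-\phi''(u_0^*)=2c_0>0$, gives
\[
n\,(u_n-u_0^*)\;\xrightarrow{\;d\;}\;\mathcal N\!\bigl(0,(2c_0)^{-1}\bigr).
\]
A first-order Taylor expansion of $g$ around $u_0^*$, with quadratic remainder $O_{\bar{\mathbb{P}}}(n^{-2})$, combined with $\bar m_n^K=\bar{\mathbb{E}}\,g(u_n)$, then yields
\[
\sqrt{\ell!}\,\frac{\bar K_n/n^{\ell-2}-(n^2/\ell!)\,\bar m_n^K}{n}
\;=\;\frac{g'(u_0^*)}{\sqrt{\ell!}}\,n\bigl(u_n-\bar{\mathbb{E}}\,u_n\bigr)+o_{\bar{\mathbb{P}}}(1),
\]
which by Slutsky converges to $\mathcal N\bigl(0,\,\binom{\ell}{2}^{2}(u_0^*)^{2\binom{\ell}{2}-2}/(2\ell!\,c_0)\bigr)=\mathcal N\bigl(0,\bar v^K(\alpha,h)\bigr)$.

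At $(\alpha_c,h_c)$ a direct computation gives $\phi''(u_c^*)=\phi'''(u_c^*)=0$ and $\phi^{(4)}(u_c^*)=-243/8$ at $u_c^*=2/3$, so $\phi(u)-\phi(u_c^*)\approx -\tfrac{81}{64}(u-u_c^*)^4$. The Laplace asymptotics at the quartic minimum then produce $\sqrt n\,(u_n-u_c^*)\xrightarrow{d}Y$ with Lebesgue density $\propto e^{-(81/64)y^4}$. The delta method still applies cleanly: $n^{1/2}(u_n-u_c^*)^2=O_{\bar{\mathbb{P}}}(n^{-1/2})$ makes the quadratic Taylor remainder of $g$ negligible, and an analog of Cor.~\ref{cor_speed_mf} for the power $\binom{\ell}{2}$ guarantees that $\bar m_n^K(\alpha_c,h_c)-(u_c^*)^{\binom{\ell}{2}}=o(n^{-1/2})$, so the centerings are interchangeable. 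The limit of the target quantity is therefore $\sqrt{\ell!}\,g'(u_c^*)\,Y=\sqrt{\ell!}\binom{\ell}{2}(2/3)^{\binom{\ell}{2}-1}Y$, and a linear change of variable in the density of $Y$ identifies it as the claimed generalized Gaussian $\bar W$ with density $\propto e^{-(\gamma y)^4}$.

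The main technical effort is concentrated in the two Laplace expansions for the edge density $u_n$: obtaining sharp Gaussian asymptotics with the explicit variance $(2c_0)^{-1}$ off criticality, and the quartic asymptotics with density $\propto e^{-(81/64)y^4}$ at $(\alpha_c,h_c)$, with enough uniform control to justify exchanging limit and normalization. These are exactly the estimates already performed in the proofs of Thms.~\ref{Thm_standard_CLT}--\ref{Thm_non-standard_CLT} in the case $g(u)=u^3$; once they are in hand, the passage to general $\ell$ is a routine delta-method argument together with the bookkeeping needed to match the constant $\gamma$ to its stated closed form.
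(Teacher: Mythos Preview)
Your approach is correct and essentially identical to the paper's: both decompose the target as a delta-method transform of the edge-density fluctuations (Taylor expansion of $u\mapsto u^{\binom{\ell}{2}}$ around $u_0^*$ or $u_c^*$) plus a centering shift, control the shift via the obvious analog of Cor.~\ref{cor_speed_mf}, and feed in the Laplace/quartic-Laplace asymptotics already established in the proofs of Thms.~\ref{Thm_standard_CLT}--\ref{Thm_non-standard_CLT}. One arithmetic slip: at the critical point the prefactor in the statement is $\ell!$, not $\sqrt{\ell!}$, so the target equals $\sqrt{n}\bigl(g(u_n)-\bar m_n^K\bigr)$ and the limit is $g'(u_c^*)\,Y=\binom{\ell}{2}(2/3)^{\binom{\ell}{2}-1}Y$, without the extra $\sqrt{\ell!}$ you wrote; this only affects the final matching of the constant $\gamma$, which you already flagged as bookkeeping.
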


\paragraph{Conditional measures.}\label{condm}
When $(\a,h)$ lies in the multiplicity curve $\cM^{rs}$,
where the solution of \eqref{FixPointEq} is not unique, we can still characterize the limiting behavior of the triangle density in the mean-field approximation, provided that we constraint the edge density to be close to one of the maximizers of the scalar problem \eqref{free_energy}.  To this aim, we consider a conditioned model, as follows. For $(\alpha,h) \in \mathcal{M}^{rs}$, let $u_i^*(\alpha,h)$ ($i=1,2$)  be the solutions of the scalar problem \eqref{free_energy}.
For $n\in\N$ and any fixed $\delta \in (0,1)$, consider the event
\begin{equation}\label{Bi_primo}
B_{u_i^{*}}\equiv B_{u_i^{*}}(n,\delta):=\left\{ x\in \mathcal{A}_n \,:\,\left|\frac{2E_n(x)}{n^2} - u_i^*(\alpha,h)\right|\le n^{-\d} \right\},
\end{equation} 
and define the conditional probability measures
\begin{equation}\label{conditional_measure}
\hat{\mathbb{P}}_{n;\alpha,h}^{(i)}\left( \,\cdot\, \right) := \bar{\mathbb{P}}_{n;\alpha,h} \left(\,\cdot\, \left\vert B_{u_i^{*}}(n,\delta)  \right.\right), \qquad \text{ for } i= 1,\,2\,.
\end{equation}
We denote the corresponding averages by $\hat{\mathbb{E}}_{n;\alpha,h}^{(i)}$ and we set  
$\hat{m}_n^{(i)}(\alpha,h):=\hat{\mathbb{E}}^{(i)}_{n;\alpha,h}\left(\frac{6\bar{T}_n}{n^3}\right)$.
The next statements represent the analog of the results presented in Subsec.~\ref{mean-field-results}, when the parameters belong to the multiplicity region $\cM^{rs}$. 

\begin{theorem}[Conditional SLLN and CLT]
	\label{Thm_conditional_limit_theorems_mfm}
	For $i=1,2$ and for all $(\alpha,h) \in \mathcal{M}^{rs}$, 
	\begin{equation}\label{conditional_lln_mfm}
		\frac{6\bar{T}_n}{n^3} \, \xrightarrow{\;\;\mathrm{a.s.}\;\;}{} \, u_i^{*}(\alpha,h)   \quad \text{ w.r.t. } \hat{\mathbb{P}}^{(i)}_{\alpha,h}, \text{ as } n \to +\infty,
	\end{equation}
	and
	\begin{equation}\label{conditional_clt_mfm}
		\sqrt{6} \, \frac{\frac{\bar{T}_n }{n}- \frac{n^2}{6} \hat{m}_n^{(i)}(\alpha,h)}{n} \, \xrightarrow{\;\;\mathrm{d}\;\;}{} \, 
		\mathcal{N}(0,\bar{v}_i^{\Delta}(\alpha,h)) 
		\quad \text{ w.r.t. } \hat{\mathbb{P}}^{(i)}_{n;\alpha,h}, \text{ as } n \to +\infty,
	\end{equation}
	where $\mathcal{N}(0,\bar{v}_i^{\Delta}(\alpha,h)) $ is a centered Gaussian distribution with variance $\bar{v}_i^{\Delta}(\alpha,h):= \frac{3{{u^4_i}^*(\a,h)}}{4c_i}$, being
	$c_i\equiv c_i(\alpha,h):=\frac{1-2\alpha[u_i^*(\alpha,h)]^2[1-u_i^*(\alpha,h)]}{4u_i^*(\alpha,h)[1-u_i^*(\alpha,h)]}.$
\end{theorem}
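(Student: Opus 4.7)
The proof will exploit the defining mean-field feature that $\bar T_n$ is a deterministic function of $E_n$: setting $Y_n := 2E_n/n^2$, we have $\frac{6\bar T_n}{n^3} = Y_n^3$. Thus both \eqref{conditional_lln_mfm} and \eqref{conditional_clt_mfm} reduce to statements about $Y_n$ under the measure $\bar{\mathbb{P}}_{n;\alpha,h}$ restricted to $B_{u_i^*}(n,\delta)$. Since the Hamiltonian \eqref{Hamilt_MF} depends on $x$ only through $E_n(x)$, the distribution of $E_n$ is explicit: writing $N := \binom{n}{2}$ and $k = yn^2/2$, Stirling's formula gives
\begin{equation*}
\bar{\mathbb{P}}_{n;\alpha,h}(Y_n \approx y) \;\propto\; \exp\bigl(n^2 \phi(y) + O(\ln n)\bigr), \qquad \phi(y) = \tfrac{\alpha}{6} y^3 + \tfrac{h}{2} y - \tfrac{1}{2} I(y),
\end{equation*}
which is precisely the function whose suprema enter the scalar problem \eqref{free_energy}.

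A direct computation yields $\phi''(u_i^*) = -2c_i$ with $c_i$ as in the statement, and since $(\alpha,h)\in\mathcal{M}^{rs}$ sits off the critical point $(\alpha_c,h_c)$, both maximizers are non-degenerate, i.e.\ $c_i>0$. For the almost-sure statement, I would first establish an exponential large-deviation bound: splitting the partition function into a neighborhood of $u_i^*$ and its complement in $B_{u_i^*}$, the choice $\delta\in(0,1)$ guarantees that $B_{u_i^*}$ is wide enough to absorb the Gaussian mass around $u_i^*$ (width $\Theta(1/n)$) while still excluding the other maximizer. Combining the two-level Laplace estimates shows
\begin{equation*}
\hat{\mathbb{P}}^{(i)}_{n;\alpha,h}\bigl(|Y_n - u_i^*| > \varepsilon\bigr) \;\leq\; e^{-\tau(\varepsilon) n^2}
\end{equation*}
for every small $\varepsilon>0$. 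Borel--Cantelli (along the same lines as Theorems~\ref{Thm_LLN} and \ref{Thm_LLN_mf}) gives $Y_n \to u_i^*$ almost surely under $\hat{\mathbb{P}}^{(i)}_{\alpha,h}$, which by continuity of $y\mapsto y^3$ transfers to \eqref{conditional_lln_mfm}.

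For the central limit theorem I would perform a precise saddle-point expansion of the restricted partition function $\hat Z_n^{(i)} := \sum_{x \in B_{u_i^*}} \exp(\bar{\mathcal{H}}_{n;\alpha,h}(x))$. Writing $y = u_i^* + s/n$ and Taylor-expanding $\phi$ to third order (with $\phi''(u_i^*) = -2c_i$, plus an $O(s^3/n^3)$ cubic remainder which is negligible on the relevant scale) shows that, under $\hat{\mathbb{P}}^{(i)}_{n;\alpha,h}$,
\begin{equation*}
n(Y_n - u_i^*) \;\xrightarrow{\;\mathrm{d}\;}\; \mathcal{N}\!\left(0, \tfrac{1}{2c_i}\right).
\end{equation*}
Applying the delta method with $g(y) = y^3$ and $g'(u_i^*) = 3(u_i^*)^2$ yields $n\bigl(Y_n^3 - (u_i^*)^3\bigr) \to \mathcal{N}\!\bigl(0, 9(u_i^*)^4/(2c_i)\bigr)$. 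Re-centering around $\hat m_n^{(i)}(\alpha,h) = \hat{\mathbb{E}}^{(i)}(Y_n^3)$, which differs from $(u_i^*)^3$ by $o(1/n)$ (by a conditional analogue of Proposition~\ref{prop_speed_mmf}, obtainable from the same Laplace expansion by evaluating first and third moments of the Gaussian limit), and rewriting the normalisation factor exactly as $\frac{\sqrt 6}{n}\bigl(\bar T_n/n - n^2 \hat m_n^{(i)}/6\bigr) = \frac{n}{\sqrt 6}(Y_n^3 - \hat m_n^{(i)})$, one obtains limiting variance $9(u_i^*)^4/(12 c_i) = 3(u_i^*)^4/(4c_i) = \bar v_i^{\Delta}(\alpha,h)$, as required.

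The main obstacle is the Laplace asymptotic for $\hat Z_n^{(i)}$. Unlike the unconditional case of Theorem~\ref{Thm_convergence_in_distribution_mf}, where Laplace's method sums contributions from both $u_1^*$ and $u_2^*$ (producing the Bernoulli-type limit with weights $\kappa$ and $1-\kappa$), here one must verify that the window $B_{u_i^*}$ of width $n^{-\delta}$ retains the full Gaussian mass near $u_i^*$ while excluding the other maximizer; this hinges on the assumption $\delta\in(0,1)$ together with the strict separation of the two maximizers on $\mathcal{M}^{rs}$. Once this localization is in place, the non-degeneracy $c_i>0$ (which is exactly what fails at the critical point and forces the non-standard scaling of Theorem~\ref{Thm_non-standard_CLT}) ensures that the quadratic term dominates in the expansion of $\phi$ and the saddle-point argument produces the stated Gaussian limit.
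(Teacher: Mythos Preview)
Your proposal is correct and follows essentially the same route as the paper: exponential concentration plus Borel--Cantelli for the SLLN, and a Laplace expansion of the restricted partition function around $u_i^*$ (using $\phi''(u_i^*)=-2c_i<0$) for the CLT, with the re-centering step $\hat m_n^{(i)}-(u_i^*)^3=o(1/n)$ supplied by the conditional analogue of Proposition~\ref{prop_speed_mmf} (stated in the paper as Corollary~\ref{cor_speed_conditional_mfm}). The only cosmetic difference is that you frame the passage from $Y_n$ to $Y_n^3$ as the delta method, whereas the paper computes the moment generating function of $n(m^3-(u_i^*)^3)$ directly via the expansion $m^3-(u_i^*)^3=3(u_i^*)^2(m-u_i^*)+O((m-u_i^*)^2)$; this is the same computation.
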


\begin{proposition}\label{prop_speed_conditional_mfm}
	For $i=1,2$ and for all $(\alpha,h) \in \mathcal{M}^{rs}$,
	\begin{equation}\label{speed_1_conditional_mfm}
		\lim_{n \to +\infty} n \cdot \hat{\mathbb{E}}_{n;\alpha,h}^{(i)} \left( \left\vert \frac{6\bar{T}_n}{n^3} - {u_i^*}^3(\alpha,h) \right\vert \right) = 
		\mathbb{E}\left(\left\vert \Bar{X}^{(i)} \right\vert\right),
	\end{equation}
	$\bar{X}^{(i)}$ is a centered Gaussian random variable with variance $6\bar{v}_i^{\Delta}(\alpha,h) = \frac{9{u_i^*(\a,h)}^4}{2c_i}$, being
	$c_i\equiv c_i(\alpha,h)=\frac{1-2\alpha[u_i^*(\alpha,h)]^2[1-u_i^*(\alpha,h)}{4u_i^*(\alpha,h)[1-u_i^*(\alpha,h)]}.$
\end{proposition}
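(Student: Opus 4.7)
The strategy mirrors that of Proposition \ref{prop_speed_mmf}, lifted to the conditional setting: combine the conditional CLT of Theorem \ref{Thm_conditional_limit_theorems_mfm} with a uniform integrability estimate, and then invoke the standard fact that convergence in distribution plus uniform integrability yields convergence of first absolute moments. Setting $S_n := 2E_n/n^2$, so that $6\bar T_n/n^3 = S_n^3$, I would start from the algebraic identity
\begin{equation}
n\bigl(S_n^3 - {u_i^*}^3\bigr) \;=\; n(S_n - u_i^*)\bigl(S_n^2 + S_n u_i^* + {u_i^*}^2\bigr),
\end{equation}
and observe that the conditional SLLN \eqref{conditional_lln_mfm} forces $S_n \to u_i^*$ almost surely under $\hat{\mathbb{P}}_{\alpha,h}^{(i)}$, so the bracketed factor converges a.s.\ to $3{u_i^*}^2$.

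Next I would transfer the conditional CLT for the triangle density back to a CLT for the edge density via the delta method (the map $\phi(u)=u^3$ is smooth and $\phi'(u_i^*)=3{u_i^*}^2\neq 0$): this yields $n(S_n-u_i^*) \xrightarrow{d} \mathcal N\bigl(0, \, 6\bar v_i^\Delta/(3{u_i^*}^2)^2\bigr)$ under $\hat{\mathbb{P}}_{n;\alpha,h}^{(i)}$. Combined with the a.s.\ convergence of the bracket above, Slutsky's lemma gives
\begin{equation}
n\Bigl(\tfrac{6\bar T_n}{n^3} - {u_i^*}^3\Bigr) \xrightarrow{\;d\;} \bar X^{(i)} \sim \mathcal N\bigl(0, 6\bar v_i^\Delta(\alpha,h)\bigr) \quad \text{under } \hat{\mathbb{P}}_{n;\alpha,h}^{(i)}.
\end{equation}
For the consistency of this deduction with the form of the CLT in Theorem~\ref{Thm_conditional_limit_theorems_mfm}, where $\bar T_n$ is re-centered around $\hat m_n^{(i)}(\alpha,h)$ rather than ${u_i^*}^3$, one needs the conditional analogue of Corollary~\ref{cor_speed_mf}, namely $n(\hat m_n^{(i)}-{u_i^*}^3)\to 0$. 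This is established by the same Laplace/saddle-point expansion of the mean-field partition function that drives Corollary~\ref{cor_speed_mf}, now performed on the restricted sum over $B_{u_i^*}$: the free-energy functional $F(u):=\tfrac{\alpha}{6}u^3+\tfrac{h}{2}u-\tfrac12 I(u)$ has a nondegenerate maximum at $u_i^*$ within $B_{u_i^*}$, hence the leading Gaussian correction is odd in $u-u_i^*$ and cancels.

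For uniform integrability of $\{n|S_n^3 - {u_i^*}^3|\}$ under $\hat{\mathbb{P}}_{n;\alpha,h}^{(i)}$, on the event $B_{u_i^*}$ the inequality $S_n^2 + S_n u_i^* + {u_i^*}^2 \le 3$ holds for $n$ large, so it suffices to control $\hat{\mathbb{E}}_{n;\alpha,h}^{(i)}\bigl(n^2(S_n - u_i^*)^2\bigr)$, which I would bound uniformly in $n$ via a Gaussian-type second-moment estimate obtained from a quadratic expansion of the mean-field Hamiltonian around $u_i^*$ inside $B_{u_i^*}$, using that $F''(u_i^*)<0$. With this $L^2$ bound, a standard uniform integrability argument (de la Vallée-Poussin) combined with the previous paragraph delivers
\begin{equation}
n\cdot\hat{\mathbb{E}}_{n;\alpha,h}^{(i)}\Bigl(\bigl|\tfrac{6\bar T_n}{n^3}-{u_i^*}^3\bigr|\Bigr) \longrightarrow \mathbb{E}\bigl(|\bar X^{(i)}|\bigr).
\end{equation}

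The main obstacle is the speed statement $n(\hat m_n^{(i)}-{u_i^*}^3)=o(1)$ in the conditional setting. Although the picture is heuristically clear (edges are approximately Gaussian around $u_i^*$ on scale $n^{-1}$, and the leading odd correction to the mean of $S_n^3 - {u_i^*}^3$ vanishes by symmetry), making this rigorous requires checking that the cutoff at the boundary of $B_{u_i^*} = \{|S_n - u_i^*|\le n^{-\delta}\}$ does not create a $1/n$-order bias. This is where the choice of $\delta$ in \eqref{Bi_primo} becomes relevant: Gaussian tail estimates show that the probability of $|S_n-u_i^*|\ge n^{-\delta}$ under an unconstrained single-well Gaussian approximation is super-polynomially small for any $\delta<1/2$, so boundary effects contribute $o(1/n)$ to the conditional mean, thereby closing the argument.
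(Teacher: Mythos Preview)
Your proposal has a circularity problem at its core. In the paper's logical architecture, Proposition~\ref{prop_speed_conditional_mfm} is proved \emph{first}; Corollary~\ref{cor_speed_conditional_mfm} (the statement $n(\hat m_n^{(i)}-{u_i^*}^3)\to 0$) is deduced from it; and only then is Theorem~\ref{Thm_conditional_limit_theorems_mfm} (the conditional CLT) proved, explicitly invoking Corollary~\ref{cor_speed_conditional_mfm} to discard the shift term. So taking the conditional CLT as an input to prove Proposition~\ref{prop_speed_conditional_mfm} reverses the dependency. You notice this yourself and propose to break the loop by establishing $n(\hat m_n^{(i)}-{u_i^*}^3)\to 0$ directly via a Laplace expansion on $B_{u_i^*}$; but that Laplace computation \emph{is} the content of the paper's proof of Proposition~\ref{prop_speed_conditional_mfm} (just with the absolute value removed). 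Once you have done that work, the remaining steps (CLT, delta method, uniform integrability) are redundant overhead.

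Concretely, the paper's proof is a short direct computation, not a CLT\,+\,UI argument: write
\[
\hat{\mathbb E}^{(i)}_{n;\alpha,h}\Big(\Big|\tfrac{6\bar T_n}{n^3}-{u_i^*}^3\Big|\Big)
=\sum_{m\in B_{u_i^*}}|m^3-{u_i^*}^3|\,\frac{\mathcal N_m\,e^{n^2(\frac{\alpha}{6}m^3+\frac{h}{2}m)}}{\bar Z_{n;\alpha,h}(B_{u_i^*})},
\]
apply Stirling and Lemma~\ref{lemma_Z} to replace the numerator and denominator by $e^{-n^2(f_{\alpha,h}-g_{\alpha,h}(m))}$ and $D_i^{(n)}$, Taylor expand $g_{\alpha,h}$ around $u_i^*$, change variables $x=n(m-u_i^*)$, and use the cubic identity $m^3-{u_i^*}^3=3{u_i^*}^2\frac{x}{n}+O(x^2/n^2)$. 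The resulting Riemann sum is $\mathbb E(|3{u_i^*}^2 X^{(i)}_n|)(1+o(1))$ for an explicit density $\ell_n^{(i)}$ concentrating to a centered Gaussian with variance $(2c_i)^{-1}$; Scheff\'e plus dominated convergence finishes. No appeal to Theorem~\ref{Thm_conditional_limit_theorems_mfm}, no uniform integrability step, and no delta method are needed, and the argument is logically prior to (indeed, the engine behind) the conditional CLT you wanted to quote.
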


\begin{corollary}\label{cor_speed_conditional_mfm}
	For $i=1,2$ and for all $(\alpha,h) \in \mathcal{M}^{rs}$,  we have
	\[
	\lim_{n \to +\infty} n \cdot \left(\hat{m}_n^{(i)}(\alpha,h)-{u_i^*}^3(\alpha,h)\right) = 0,
	\]
where we recall that $\hat{m}_n^{(i)}(\alpha,h):=\hat{\mathbb{E}}^{(i)}_{n;\alpha,h}\left(\frac{6\bar{T}_n}{n^3}\right)$.
\end{corollary}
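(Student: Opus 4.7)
The plan is to deduce the corollary from Prop.~\ref{prop_speed_conditional_mfm} together with the conditional CLT in Thm.~\ref{Thm_conditional_limit_theorems_mfm}: the $L^{1}$--convergence in the former tightly constrains how $\hat m_n^{(i)}$ can deviate from its limit, because a centered Gaussian is $L^{1}$--minimal among its translates. Introduce the shorthand
\[
Y_n := n\Big(\tfrac{6\bar{T}_n}{n^3}-{u_i^*}^3\Big),\qquad \tilde Y_n := n\Big(\tfrac{6\bar{T}_n}{n^3}-\hat m_n^{(i)}\Big),\qquad a_n := n\big(\hat m_n^{(i)}-{u_i^*}^3\big),
\]
so that $Y_n=\tilde Y_n+a_n$ and $a_n=\hat{\E}^{(i)}_{n;\alpha,h}(Y_n)$; the target is $a_n\to0$.

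First, by Jensen's inequality, $|a_n|\le \hat{\E}^{(i)}_{n;\alpha,h}(|Y_n|) = n\cdot \hat{\E}^{(i)}_{n;\alpha,h}\big(|\tfrac{6\bar{T}_n}{n^3}-{u_i^*}^3|\big)$, and Prop.~\ref{prop_speed_conditional_mfm} shows that the right-hand side converges to $\E(|\bar X^{(i)}|)<\infty$. Hence $(a_n)$ is bounded, and it suffices to prove that every convergent subsequence has limit $0$. Fix such a subsequence, $a_{n_k}\to a\in\R$. Rewriting the conditional CLT of Thm.~\ref{Thm_conditional_limit_theorems_mfm} with an obvious rescaling in the numerator yields $\tilde Y_n\xrightarrow{d}\bar X^{(i)}\sim \mathcal N\big(0,6\bar v_i^{\Delta}(\alpha,h)\big)$; Slutsky's theorem then gives $Y_{n_k}=\tilde Y_{n_k}+a_{n_k}\xrightarrow{d}\bar X^{(i)}+a$.

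Now I apply Fatou's lemma for weak convergence (equivalently, the Portmanteau theorem) to the continuous non-negative function $x\mapsto|x|$:
\[
\E\big(|\bar X^{(i)}+a|\big)\,\le\, \liminf_{k\to\infty}\hat{\E}^{(i)}_{n_k;\alpha,h}(|Y_{n_k}|)\,=\,\E(|\bar X^{(i)}|),
\]
where the equality is precisely Prop.~\ref{prop_speed_conditional_mfm}. For a non-degenerate centered Gaussian $\bar X^{(i)}$ with standard deviation $\sigma_i:=\sqrt{6\bar v_i^{\Delta}(\alpha,h)}$, the map $b\mapsto\E(|\bar X^{(i)}+b|)$ is strictly convex with derivative $1-2\Phi(-b/\sigma_i)$, which vanishes only at $b=0$, so its unique minimum is attained there. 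This forces $a=0$, contradicting $a\neq0$ and completing the proof.

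The only mildly delicate point, which I view as the main obstacle, is the strict uniqueness of the minimizer of $b\mapsto\E(|\bar X^{(i)}+b|)$; this is immediate from symmetry and strict convexity as sketched above, but without it the Fatou inequality alone would not pin down $a$. The remainder of the argument is a soft Slutsky/Portmanteau routine. The very same three-step scheme also yields Cor.~\ref{cor_speed_mf} verbatim, with Prop.~\ref{prop_speed_mmf} and Thm.~\ref{Thm_standard_CLT} replacing their conditional analogues.
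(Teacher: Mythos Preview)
Your argument is mathematically sound, but it is circular within the logical structure of this paper: the conditional CLT \eqref{conditional_clt_mfm} in Thm.~\ref{Thm_conditional_limit_theorems_mfm}, which you invoke to obtain $\tilde Y_n\xrightarrow{d}\bar X^{(i)}$, is proved in the paper \emph{using} Cor.~\ref{cor_speed_conditional_mfm} (via the decomposition \eqref{decomp_cltst} and Slutsky). The same remark applies to your final comment about Cor.~\ref{cor_speed_mf}: in the paper, Thm.~\ref{Thm_standard_CLT} is deduced from Cor.~\ref{cor_speed_mf}, not the other way around. So as written, you are assuming what remains to be shown.

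The paper's own route is both direct and considerably simpler. One reruns the computation of Prop.~\ref{prop_speed_conditional_mfm} with the absolute value removed: the same Stirling/Laplace expansion over $B_{u_i^*}$ and change of variable $x=n(m-u_i^*)$ give
\[
n\big(\hat m_n^{(i)}(\alpha,h)-{u_i^*}^3\big)=\E\!\big(3{u_i^*}^2 X_n^{(i)}\big)(1+o(1))+o(1)\ \xrightarrow{\ n\to\infty\ }\ 3{u_i^*}^2\,\E\!\big(X^{(i)}\big)=0,
\]
since the limiting $X^{(i)}\sim\mathcal N(0,(2c_i)^{-1})$ is centered. No CLT, no Portmanteau/Fatou, no convexity of $b\mapsto\E|\bar X^{(i)}+b|$ is needed. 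Your machinery would become non-circular (and then genuinely alternative) if you replaced the appeal to \eqref{conditional_clt_mfm} by the intermediate statement $Y_n\xrightarrow{d}\bar X^{(i)}$, which is exactly what the paper establishes for $\bar V_n^{(i)}$ before invoking the corollary; but at that point the direct argument above is already shorter.
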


\paragraph{Beyond the edge-triangle model.}
All the results stated in Subsec.~\ref{RET} can be easily extended to the Gibbs measure $\mathbb{P}_{\beta_k,\beta_1}$, whose associated Hamiltonian is obtained by setting $\beta_i=0$ for all $i\neq 1, k$ (with $k>2$) in \eqref{Hamiltonian}, i.e.
\begin{equation*}
\mathcal{H}_{n;\beta_k,\beta_1}(G):=n^2\left[\beta_{k}t(H_{k},G) +\beta_1 t(H_{1},G)\right]\,,
\quad \mbox{ for } G\in\cG_n.
\end{equation*}
The limiting free energy is given by the scalar maximization problem \eqref{scalar_probl} and the replica symmetric region is defined by condition $\beta_k\geq 0$ combined with \eqref{beta_piccoli_rs}.
The phase diagram has been fully characterized in \cite[Prop. 3.2]{RY} and is completely analogous to those represented in Fig.~\ref{fig:phase_diagram} for the edge-triangle model. More precisely,  problem \eqref{scalar_probl} has exactly one maximizer $u^*_0(\beta_k,\beta_1)$ on the whole replica symmetric regime, and exactly two maximizers $u^*_1(\beta_k,\beta_1)$,  $u^*_2(\beta_k,\beta_1)$ along a certain critical curve $\mathcal{M}^{rs}$ that starts at the critical point $(\beta_k^c,\beta_1^c)=\frac 12 \left(\frac{p^{p-1}}{(p-1)^p}, \log(p-1)-\frac {p}{(p-1)}\right)$, being  $p:= E(H_k)$.
At criticality the unique maximizer coincides with $u^*_c:= u^*(\beta^{c}_k,\beta^{c}_1)=\frac{p-1}p$. \\
In case $H_k$ is a clique with $\ell>3$ vertices, all the results of Subsecs.~\ref{mean-field-results}--\ref{condm} can be extended to the mean-field measure $\bar{\mathbb{P}}_{n;\beta_k,\beta_1}$ associated with $\bar{\mathcal{H}}_{n;\beta_k,\beta_1}(G):=\frac{\beta_k\aut}{n^{\ell-2}}\bar{K}_n(G)+2\beta_1 E_n(G)$, being $$\bar{K}_n(G)=\frac{n^{\ell}(t(H_1,G))^{\binom{\ell}{2}}}{\aut}$$
(see \eqref{Kn}). An equivalent version of Lem.~\ref{lemma_Z}, which is a crucial results for all the proofs, can be obtained by studying the \emph{energy function}
$ g_{\beta_k,\beta_1}(m):= \beta_k m^{p} + \beta_1 m  - {I(m)}/{2}$, for $m \in \G_n$,
(which generalizes \eqref{g}), and exploiting its Taylor  expansion 
\begin{align*}
	\begin{cases}
		g_{\beta_k,\beta_1}(m)=g_{\beta_k,\beta_1}(u_i^*)-c_i(m-u^*_i)^2 + o(m-u^*_i)^3 &
		\mbox{ if } (\beta_k,\beta_1) \neq (\beta_k^c,\beta_1^c) \vspace{0.2cm} \label{taylor_g_beta} \\
		g_{\beta_k,\beta_1}(m)=g_{\beta_k,\beta_1}(u_c^*)-\frac{p^5}{48(p-1)^2}(m-u_c^*)^4 + o(m-u_c^*)^5 &
		\mbox{ if } (\beta_k,\beta_1) = (\beta_k^c,\beta_1^c),
	\end{cases}
\end{align*}
where $i\in\{0,1,2\}$ and $c_i(\beta_k,\beta_1):=- g''_{\beta_k,\beta_1}(u^*_i)/2= \frac{1-2p(p-1)\beta_k{u^*_i}^{p-1}(1-u^*_i)}{4u^*_i(1-u^*_i)}.$
 
\section{Proofs: edge-triangle model} \label{proofs_et}
Before proving our main results, we show some basic properties of the derivatives w.r.t.~$\alpha$ of the free energy $f_{\alpha,h}$ given in \eqref{free_energy}. It is convenient to consider the finite size free energy $f_{n;\alpha,h}$ and the limiting free energy $f_{\alpha,h}$ as functions of $\alpha$. 

\subsection{Derivatives of the free energy}
For the sake of readability, we set  $f_{n,h}(\alpha):=f_{n;\alpha,h}$. We point out that for every $n$, $f_{n,h}$ is a convex function of $\alpha$; indeed the second derivative of $f_{n,h}$ is positive, because it is a variance (see \eqref{var_rel}).  Furthermore, we recall that, for $(\alpha,h)$ in the replica symmetric regime,  $\lim_{n\to \infty}f_{n,h}(\alpha)$ exists and it is given by $f_{h}(\alpha):=f_{\alpha,h}$, which is defined in \eqref{free_energy}.  We are then under the assumptions of the following lemma. 	
\begin{lemma}[\cite{E}, Lemma V.7.5.] \label{ellis}
	Let $(f_n)_{n \in \N}$ be a sequence of convex functions on an open interval $A$ of $\R$ such that $f(t) = \lim_{n \to +\infty}f_n(t)$ exists for every $t\in A$. Let $(t_n)_{n \in \N}$ be a sequence in $A$ which converges to a point $t_0 \in A$. If $f_n'(t_n)$ and $f'(t_0)$ exist, then $\lim_{n \to +\infty}f_n'(t_n)$ exists and equals $f'(t_0)$. 
\end{lemma}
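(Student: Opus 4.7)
The plan is to exploit the standard convex-function inequality that sandwiches the derivative between chord slopes, and then to pass to the limit in two stages: first letting $n\to\infty$ with the comparison points fixed, and then letting those comparison points collapse onto $t_0$.

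More concretely, I would start by fixing $s,u \in A$ with $s < t_0 < u$. Since $t_n\to t_0$, for all sufficiently large $n$ one has $s < t_n < u$. Convexity of $f_n$ implies the three-chord inequality
\begin{equation*}
\frac{f_n(t_n)-f_n(s)}{t_n - s} \;\le\; f_n'(t_n) \;\le\; \frac{f_n(u)-f_n(t_n)}{u - t_n}.
\end{equation*}
The next step is to send $n\to\infty$ on both sides. By assumption $f_n(s)\to f(s)$ and $f_n(u)\to f(u)$, and I will argue that $f_n(t_n)\to f(t_0)$. This last convergence is the key technical ingredient and is the main obstacle: it does \emph{not} follow from bare pointwise convergence of a general sequence of functions, but it does follow from the classical fact that pointwise convergence of convex functions on an open interval implies uniform convergence on compact subsets. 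Granting this, one can split
\begin{equation*}
|f_n(t_n)-f(t_0)| \;\le\; |f_n(t_n)-f(t_n)| + |f(t_n)-f(t_0)|,
\end{equation*}
where the first term vanishes by uniform convergence on a compact neighbourhood of $t_0$, and the second by continuity of the convex limit $f$ on the interior of $A$.

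Combining these convergences and taking $\liminf$ and $\limsup$ in the sandwich yields
\begin{equation*}
\frac{f(t_0)-f(s)}{t_0 - s} \;\le\; \liminf_{n\to\infty} f_n'(t_n) \;\le\; \limsup_{n\to\infty} f_n'(t_n) \;\le\; \frac{f(u)-f(t_0)}{u - t_0}.
\end{equation*}
Finally I would let $s\uparrow t_0$ and $u\downarrow t_0$. Because $f$ is convex on $A$ and, by hypothesis, $f'(t_0)$ exists, the left and right difference quotients of $f$ at $t_0$ both converge to the common value $f'(t_0)$. This pinches $\liminf$ and $\limsup$ together to $f'(t_0)$, so $\lim_{n\to\infty} f_n'(t_n)$ exists and equals $f'(t_0)$, as required.

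The only nontrivial point is the local uniform convergence invoked above; everything else is a routine manipulation of the convex three-chord inequality. I would therefore either quote this standard fact (it is, for instance, Theorem 10.8 in Rockafellar's \emph{Convex Analysis}) or give a short self-contained argument: monotonicity of difference quotients for each $f_n$ gives a uniform Lipschitz bound on any compact $[a,b]\subset A$, so the family $(f_n)$ is equicontinuous there, and Arzelà--Ascoli together with pointwise convergence forces local uniform convergence.
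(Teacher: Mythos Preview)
The paper does not supply a proof of this lemma at all: it is quoted verbatim from Ellis's book \cite{E} and used as a black box. Your argument is correct and is essentially the standard textbook proof --- the sandwich via the monotone-chord inequality for convex functions, followed by the passage to the limit using local uniform convergence of pointwise-convergent convex sequences (Rockafellar, Thm.~10.8, or the equicontinuity argument you sketch). There is nothing to compare against in the paper itself; your proposal simply fills in what the paper outsources to the reference.
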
 
Now let $(\alpha,h)\in \mathcal{U}^{rs}$, and  $m^{\Delta}(\alpha,h) := 6 f'_{h}(\alpha) = {u_0^*}^3(\alpha,h)$. As a corollary of Lem.~\ref{ellis}  we obtain 
	\begin{equation} \label{m_lim}
	\lim_{n \to +\infty} m^{\Delta}_n(\alpha,h) = \lim_{n \to +\infty}6f'_{n,h}(\alpha) = 6 f'_{h}(\alpha)= m^{\Delta}(\alpha,h).
	\end{equation}
The following locally uniform convergence also holds.
\begin{proposition}\label{th:unif-conv}
	Let $h \in \mathbb{R}$ and $\, \mathcal{U}^{rs}_h:=\{\a \in (-2, +\ \infty): (\a,h)\in \mathcal{U}^{rs}\}$. If $\, \mathcal{U}^{rs}_h$ is non empty, then 
	$f'_{n,h} \longrightarrow f'_h$, as $n \to +\infty$,
	locally uniformly on $\, \mathcal{U}^{rs}_h$.
\end{proposition}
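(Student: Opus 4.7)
The plan is to upgrade the pointwise convergence from Lemma~\ref{ellis} to locally uniform convergence by combining convexity of each $f_{n,h}$ in $\alpha$ (which makes $f'_{n,h}$ non-decreasing) with continuity of the limit $f'_h$ on $\mathcal{U}^{rs}_h$. Given these two ingredients, Pólya's monotone-convergence theorem (sometimes called Dini's second theorem: pointwise convergence of a sequence of monotone functions to a continuous limit on a compact interval is automatically uniform) delivers the conclusion.

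For each $n$, identity \eqref{var_rel} gives $f''_{n,h}(\alpha) = \text{Var}_{n;\alpha,h}(T_n)/n^3 \geq 0$, so $f_{n,h}$ is convex on $(-2,+\infty)$ and $f'_{n,h}$ is non-decreasing. Applying Lemma~\ref{ellis} to $(f_{n,h})$ on $(-2,+\infty)$ with pointwise limit $f_h$ yields $f'_{n,h}(\alpha)\to f'_h(\alpha)$ at every $\alpha\in\mathcal{U}^{rs}_h$, since $f_h$ is differentiable there. To see that $f'_h$ is continuous on the full set $\mathcal{U}^{rs}_h$, note that $f_h$ is analytic on $\mathcal{U}^{rs}\setminus\{(\alpha_c,h_c)\}$ (hence $f'_h$ is continuous there), while at the critical point (relevant only when $h=h_c$, since $\mathcal{U}^{rs}_{h_c}=(-2,+\infty)$) one uses the envelope-type identity
\begin{equation*}
f'_h(\alpha)=\tfrac{1}{6}\bigl(u_0^*(\alpha,h)\bigr)^3
\end{equation*}
together with the fact, recalled below \eqref{reg_rs}, that the fixed-point equation \eqref{FixPointEq} has a unique solution on the whole of $\mathcal{U}^{rs}$ (including $(\alpha_c,h_c)$). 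Berge's maximum theorem, applied to the continuous objective in \eqref{free_energy} on the compact set $[0,1]$ with uniqueness of the maximizer, yields continuity of $u_0^*(\cdot,h_c)$ at $\alpha_c$, whence continuity of $f'_{h_c}$ there.

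To conclude, let $K\subset \mathcal{U}^{rs}_h$ be compact. Since $\mathcal{U}^{rs}_h$ is open, $K$ is covered by finitely many compact intervals $[a_j,b_j]\subset \mathcal{U}^{rs}_h$. On each such interval the non-decreasing functions $f'_{n,h}$ converge pointwise to the continuous function $f'_h$, so Pólya's theorem gives uniform convergence on $[a_j,b_j]$ and hence on $K$. I expect the subtlest point of the argument to be precisely the continuity of $f'_h$ at the critical point $(\alpha_c,h_c)$: although the second partial derivatives of $f_{\alpha,h}$ diverge there (second-order phase transition), the first derivative inherits continuity directly from the continuity of the unique maximizer of \eqref{free_energy}, so the singularity manifests only as an infinite slope of $f'_h$ rather than as a jump. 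Everything else reduces to classical convex-analysis facts.
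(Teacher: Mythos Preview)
Your proof is correct and takes a slightly different (and arguably cleaner) route than the paper. The paper argues by contradiction: assuming failure of uniform convergence on some compact $A$, it extracts a sequence $\alpha_k\in A$ along which $|f'_{n_k,h}(\alpha_k)-f'_h(\alpha_k)|>\varepsilon$, passes to a convergent subsequence $\alpha_{k_\ell}\to\alpha_0$, and derives a contradiction from the \emph{moving-argument} form of Lemma~\ref{ellis} (which gives $f'_{n_{k_\ell},h}(\alpha_{k_\ell})\to f'_h(\alpha_0)$) together with continuity of $f'_h$. You instead use only the stationary version of Lemma~\ref{ellis} to get pointwise convergence, then invoke monotonicity of each $f'_{n,h}$ (from $f''_{n,h}\ge 0$) and P\'olya's theorem to upgrade to uniform convergence on compact subintervals. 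Both arguments rest on exactly the same two ingredients---convexity of the $f_{n,h}$ and continuity of $f'_h$---just packaged differently; yours names the classical lemma, while the paper essentially reproves it by hand via Ellis's stronger statement.

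One minor simplification worth noting: your Berge/envelope argument for continuity of $f'_h$ at $(\alpha_c,h_c)$ is correct but heavier than needed. Since $f_h$ is a pointwise limit of convex functions it is itself convex on $(-2,+\infty)$, and a convex function that is differentiable on an open interval automatically has a continuous derivative there (its one-sided derivatives are monotone and coincide). So continuity of $f'_h$ on all of $\mathcal{U}^{rs}_h$, including at $\alpha_c$, follows directly from convexity plus the existence of $f'_h$ on $\mathcal{U}^{rs}_h$, without needing to analyze the maximizer.
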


\begin{proof}
	We prove the statement by  contradiction, therefore we assume that there exists a compact set $A\subset \mathcal{U}^{rs}_h$ such that the convergence is not uniform on $A$. If this is the case, there exists $\varepsilon>0$ and a subsequence $(f_{n_k; \alpha, h})_{k \in \N}$ of $(f_{n; \alpha, h})_{n \in \N}$ such that for all $k \in \N$, 
	\begin{equation}\label{contradd}
		\max_{\alpha \in A}|f'_{n_k, h}(\a) - f'_{h}(\a)|>\varepsilon.
	\end{equation} 
	We recall that, since $(\alpha,h)\in \mathcal{U}^{rs}$, these derivatives are continuous.
	From \eqref{contradd} we then deduce that for each $k\in \N$ there exists $\alpha_k \in A$ such that 
	\begin{equation} \label{eps}
		|f'_{n_k,h}(\alpha_k)-f'_{h}(\alpha_k)|>\varepsilon.
	\end{equation}
	We can pass to a converging subsequence $(\alpha_{k_\ell})_{\ell \in \N}$ by compactness of $A$. If we denote by $\alpha_0$ the limit of this subsequence, by continuity  we have $f'_{n_{k_\ell},h}(\alpha_{k_\ell})  \longrightarrow f'_{h}(\alpha_0)$, as $\ell\to +\infty$. By triangle inequality, for all $\ell \in \N$,
	\begin{equation}
		\varepsilon<	|f'_{n_{k_\ell},h}(\alpha_{k_\ell})-f'_{h}(\alpha_{k_\ell})| \le |f'_{n_{k_\ell},h}(\alpha_{k_\ell})-f'_{h}(\alpha_0)|+|f'_{h}(\alpha_0)-f'_{h}(\alpha_{k_\ell})|.
	\end{equation} 	
	The l.h.s.~is bounded away from zero due to \eqref{eps}, while the r.h.s.~vanishes, as $\ell\to +\infty$, due to Lem.~\ref{ellis} and the pointwise convergence $f'_{n_{k_\ell},h}(\alpha_{k_\ell}) \longrightarrow f'_{h}(\alpha_0)$, as $\ell\to +\infty$. We get the contradiction.
\end{proof}	
In the next paragraph we provide a short overview of the main notions and results on graph limits theory, relevant to the proof of Thm.~\ref{Thm_convergence_in_distribution}. We refer the reader to \cite{BCLSV06,BCLSV08,BCLSV12,L12,LS06} for a thorough description of these concepts. 
\subsection{Key results on graph limits}\label{graphs_limit}
Let $(G_n)_{n \geq 1}$ be a sequence of simple graphs whose number of vertices tends to infinity;  the limit object of this sequence is a symmetric measurable function on the unitary square called \textit{graphon}. A crucial step for understanding where this definition comes from, is to introduce the notion of \textit{empirical graphon}.  
\begin{figure}[h!]
\centering
\includegraphics[scale=0.75]{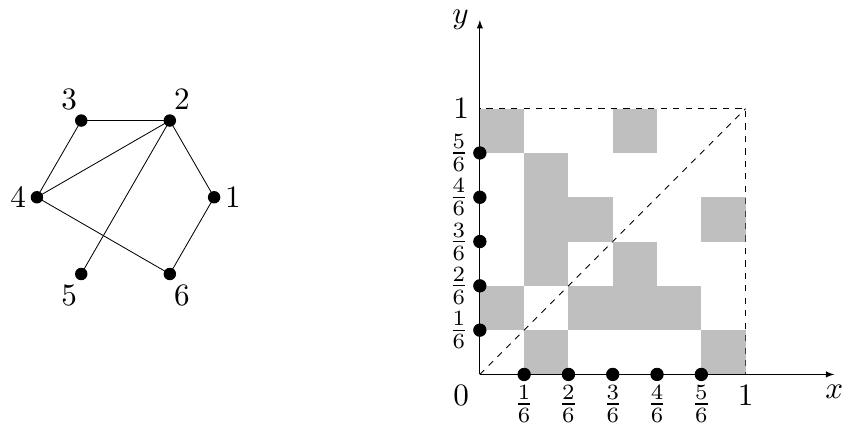}
\caption{ \small Graph $H$ with $m=6$ vertices on the left and its empirical graphon $g^{H}$ on the right. The gray regions are constantly equal to one, whereas the white regions are constantly equal to zero (example from \cite{H}). 
}
\label{graphon}
\end{figure}

Let $H$ be a finite simple graph $H$ with vertex set $[m]$. The empirical graphon $g^H$, corresponding to $H$, is defined by
\begin{equation}\label{graph_embedding}
g^H(x,y) := \left\{
\begin{array}{ll}
1 & \text{ if $\{\lceil mx \rceil, \lceil my \rceil\}$ is an edge in $H$}\\
0 & \text{ otherwise}
\end{array},
\right.
\end{equation}
where $(x,y) \in [0,1]^2$. In other words, $g^H$ is a step function corresponding to the adjacency matrix of $H$ (see Fig. \ref{graphon}). It is important to stress that any finite simple graph admits a graphon representation, therefore the sequence $(G_n)_{n \geq 1}$  can be equivalently turned into a sequence of empirical graphons $(g^{G_n})_{n \geq 1}$. Very intuitively, if we imagine to assign a black pixel to each block constantly equal to $1$ appearing in the step function $g^{H}$ (and conversely a white pixel to each block constantly equal to $0$), then, as $n$ gets large, pixels become finer and finer and the density of black pixels can be expressed as a number between $0$ and $1$. It is then more natural to see that the limit of $(g^{G_n})_{n \geq 1}$ can be represented by a measurable and symmetric function $g:[0,1]^2 \rightarrow [0,1]$ (called , indeed, graphon). The set of all graphons is denoted by $\mathcal{W}$; also notice that empirical graphons allow to represent all simple graphs as elements of the space $\mathcal{W}$.  The definition of convergence is formalized by making use of the notion of homomorphism density $t(H,G_n)$  \eqref{def_graph_hom_density} and its continuous analog, the so-called subgraph density
\begin{equation}\label{def_subgr_density}
t(H,g) := \int_{[0,1]^m} \prod_{\{i,j\} \in \mathcal{E}(H)} g(x_i,x_j) \, dx_1 \dots dx_m,
\end{equation}
where $\mathcal{E}(H)$ denotes the edge set of $H$. A sequence of graphs $(G_n)_{n \geq 1}$ is said to converge to the graphon $g$ if, for every finite simple graph $H$,
\begin{equation}\label{density_convergence}
\lim_{n \to +\infty} t(H,G_n) = t(H,g).
\end{equation}
Any sequence of graphs that converges in the appropriate way has a graphon as limit, and vice-versa every graphon arises as the limit of an appropriate graph sequence.
Intuitively, the interval $[0,1]$ represents a continuum of vertices and $g(x_i,x_j)$ corresponds to the probability of drawing the edge $\{x_i,x_j\}$. For instance, the \ER graphon is represented by the function that is constantly equal to $p$ on the unit square.
In order to take into account the arbitrary labeling of the vertices when they are embedded in the unit interval, one needs to introduce an equivalence relation on $\mathcal{W}$. Let $\Sigma$ be the space of all bijections $\sigma:[0,1] \rightarrow [0,1]$ preserving the Lebesgue measure. We say that the functions $g_1,g_2 \in \mathcal{W}$ are equivalent, and we write $g_1 \sim g_2$, if $g_2(x,y) = g_1(\sigma(x),\sigma(y))$ for some $\sigma \in \Sigma$. The quotient space under $\sim$ is denoted by $\widetilde{\mathcal{W}}$ and $\tau:g \mapsto \widetilde{g}$ is the natural mapping associating a graphon with its equivalence class. The space $\widetilde{\mathcal{W}}$ can be equipped with the so-called \textit{cut distance} that turns $\widetilde{\mathcal{W}}$ into a compact metric space (see \cite{LS07}, Thm.~5.1).  On the space $(\widetilde{\mathcal{W}},\delta_\square)$ a large deviation principle for the sequence of measures ($\widetilde{\mathbb{P}}_{n;p}^{\text{ER}})_{n \geq 1}$ of a dense \ER random graph has been proved by Chatterjee and Varadhan in \cite{CV11}. Here  $\widetilde{\mathbb{P}}_{n;p}^{\text{ER}}$ denotes the probability distribution on $\widetilde{\mathcal{W}}$ induced  by the \ER graph $G=G(n,p)$ via the mapping $G \mapsto g^G \mapsto \widetilde{g}^G$. We report below the large deviation principle:
\begin{theorem}[\cite{CV11}, Thm.~2.3]\label{Thm_LDP_ER}
For each fixed $p \in (0,1)$, the sequence $(\widetilde{\mathbb{P}}_{n;p}^{\text{ER}})_{n \geq 1}$ satisfies a large deviation principle on the space $(\widetilde{\mathcal{W}},\delta_{\square})$, with speed $n^2$ and rate function
\begin{equation}\label{ER_rate_function}
\mathcal{I}_p(\widetilde{g}) = \frac{1}{2} \int_0^1 \int_0^1  I_p(g(x,y)) \, dx \, dy,
\end{equation}
where $g$ is any representative element of the equivalence class $\widetilde{g}$ and, for $u \in [0,1]$, we set $I_p(u) = u \ln \frac{u}{p} + (1-u) \ln \frac{1-u}{1-p}$.
\end{theorem}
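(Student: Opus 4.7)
The plan is to establish the large deviation principle on the compact metric space $(\widetilde{\mathcal{W}}, \delta_\square)$, where compactness (Lovász--Szegedy) reduces the full LDP to a local weak LDP: for every $\widetilde g \in \widetilde{\mathcal W}$,
\[
\lim_{\varepsilon\to 0}\liminf_{n\to \infty}\frac{1}{n^2}\log \widetilde{\mathbb P}_{n;p}^{\mathrm{ER}}(B_\varepsilon(\widetilde g)) \;=\; \lim_{\varepsilon\to 0}\limsup_{n\to \infty}\frac{1}{n^2}\log \widetilde{\mathbb P}_{n;p}^{\mathrm{ER}}(B_\varepsilon(\widetilde g)) \;=\; -\mathcal I_p(\widetilde g).
\]
Compactness also gives exponential tightness for free, so no separate argument is needed there. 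The starting observation is that for any labeled graph $G_n$ with $m$ edges, $\mathbb P_{n;p}^{\mathrm{ER}}(G_n) = p^m(1-p)^{\binom n2 - m}$, which after recognising $m/\binom n2$ as the average of $g^{G_n}$ on $[0,1]^2$ can be rewritten as $\exp\!\bigl(-n^2\mathcal I_p(\widetilde g^{G_n}) - n^2 H(\widetilde g^{G_n}) + o(n^2)\bigr)$ for the Bernoulli entropy $H$. Thus $\mathcal I_p$ is the correct \emph{energy}; what remains is to control the combinatorial multiplicity, i.e.~the number of graphs whose equivalence class falls into a small cut-ball.

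For the upper bound, given $\varepsilon>0$ and $\widetilde g$, I would apply the Frieze--Kannan weak regularity lemma to cover $B_\varepsilon(\widetilde g)$ by $\delta_\square$-neighborhoods of step graphons $\widetilde g_k$ on $k=k(\varepsilon)$ equal blocks. On each step graphon one counts the number of labeled graphs within cut-distance $\varepsilon$ by an entropy/Stirling estimate: it is at most $\exp\!\bigl(n^2 H(\widetilde g_k) + o(n^2)\bigr)$. Multiplying this by the pointwise probability bound from the previous display cancels the entropy term and leaves $\exp(-n^2\mathcal I_p(\widetilde g_k) + o(n^2))$; summing over the polynomially-many cover elements and letting $\varepsilon\to 0$ delivers the bound $-\mathcal I_p(\widetilde g)$.

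For the lower bound I would approximate $\widetilde g$ by a step graphon $\widetilde g_k$ with $\mathcal I_p(\widetilde g_k) \to \mathcal I_p(\widetilde g)$ as $k\to\infty$, using Jensen's inequality and the lower semicontinuity of $\mathcal I_p$ with respect to $\delta_\square$. Then I would construct random graphs $G_n$ as stochastic block lifts of $\widetilde g_k$: independently place Bernoulli$(a_{ij})$ edges between the appropriate blocks. Azuma--Hoeffding (applied to block-edge counts) together with the sampling lemma shows that $\delta_\square(\widetilde g^{G_n},\widetilde g_k) < \varepsilon$ with probability $1-o(1)$, and the Radon--Nikodym derivative of $\widetilde{\mathbb P}_{n;p}^{\mathrm{ER}}$ with respect to this block product measure contributes the factor $\exp(-n^2 \mathcal I_p(\widetilde g_k) + o(n^2))$, as needed.

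The main obstacle is the mismatch between the coarse cut-distance topology and the fine combinatorial/analytic estimates: $\mathcal I_p$ is only lower semicontinuous in $\delta_\square$, and neither the entropy nor the energy terms are individually continuous. Reconciling these requires simultaneously controlling the step-graphon approximation (via the regularity lemma), the counting/sampling lemmas of Borgs--Chayes--Lovász--Sós--Vesztergombi that quantify how subgraph densities deform under small cut perturbations, and the explicit Stirling-type estimates for the multiplicity. This technical triangulation is precisely the core content of \cite{CV11}.
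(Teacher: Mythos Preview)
The paper does not give a proof of this statement: Theorem~\ref{Thm_LDP_ER} is quoted verbatim from Chatterjee--Varadhan \cite{CV11} and used as a black box, so there is no ``paper's own proof'' to compare your proposal against. Your sketch is a faithful high-level summary of the original argument in \cite{CV11} (compactness of $(\widetilde{\mathcal W},\delta_\square)$ reducing the LDP to local estimates, the upper bound via weak regularity and entropy counting, the lower bound via block-model plants and a change of measure), and as such it is appropriate as an outline, but none of it is needed for the present paper.
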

We will strongly rely on Thm.~\ref{Thm_LDP_ER} for the proof of Thm.~\ref{Thm_convergence_in_distribution}.

\subsection{Exponential convergence}
	
	\begin{proof}[Proof \hfill of \hfill Theorem~\ref{Thm_convergence_in_distribution}]
		The proof consists in showing that the sequence of the laws of the triangle density w.r.t.~the measure $\mathbb{P}_{n;\alpha,h}$ is exponentially tight; this is made by 
representing the measure $\mathbb{P}_{n;\alpha,h}$ as a tilted probability measure on the space of graphons, that has as a priori measure the \ER one. Let $H_2$ be a triangle. Note that the homomorphism density defined in \eqref{def_graph_hom_density} is then $t(H_2,G)=\frac{6T _n(G)}{n^3}$.
An important property that we are going to use is that $t(\,\cdot\,,G)= t(\,\cdot\,,\tilde{g})$, where $\widetilde{g}$ is the image in $\widetilde{\mathcal{W}}$ of the empirical graphon  $g^{G}$ of $G$ and $t(\cdot, \tilde{g})$ is the subgraph density \eqref{def_subgr_density}. This allows us to extend the edge-triangle Hamiltonian $\mathcal{H}_{n;\alpha,h}$ to the space $\widetilde{\mathcal{W}}$ replacing  homomorphism densities with subgraph densities. Indeed, for all $G\in\cG_n$, 
		\begin{equation}\label{PG}
		\P_{n;\alpha,h}(G)= 
		\frac{\exp \left(\mathcal{H}_{n;\alpha,h}(G)\right)}{
			\, \sum_{\widetilde{g} \in \widetilde{\mathcal{W}}} \, 
			\sum_{G \in [\, \widetilde{g} \,]_n} \exp \left(\mathcal{H}_{n;\alpha,h}(G)\right)}
		=
		\frac{\exp \left(\mathcal{H}_{n;\alpha,h}(\widetilde{g})\right)\1(g^G \in \widetilde{g})}{
			\, \sum_{\widetilde{g} \in \widetilde{\mathcal{W}}} \, 
			|[\, \widetilde{g} \,]_n|\exp \left(\mathcal{H}_{n;\alpha,h}(\widetilde{g})\right)}\,,
		\end{equation}
		where $[\, \widetilde{g} \,]_n:=\{G \in \mathcal{G}_n: g^G \in \widetilde{g}\}$ and $|\cdot|$ denotes the cardinality of a set. Notice that thanks to fact that we can replace the homomorphism density with the subgraph density, the internal sum $\sum_{G \in [\, \widetilde{g} \,]_n}$ in the second term of \eqref{PG} simply reduces to the cardinality of the set $[\, \widetilde{g} \,]_n$. Since for $p=\frac{1}{2}$ the \ER measure becomes uniform on $\mathcal{G}_n$, we can equivalently write $|[\, \widetilde{g} \,]_n| = 2^{n(n-1)/2}\mathbb{P}_{n;\frac{1}{2}}^{\text{ER}}([\, \widetilde{g} \,]_n)$. As a consequence, from \eqref{PG}, we obtain 
		\begin{equation}\label{repr_pf}
			\P_{n;\alpha,h}(G)=\frac{2^{-n(n-1)/2} \exp \left(\mathcal{H}_{n;\alpha,h}(\widetilde{g})\right)\1(g^G \in \widetilde{g})}
			{\sum_{\widetilde{g} \in \widetilde{\mathcal{W}}} \exp\left(\mathcal{H}_{n;\alpha,h}(\widetilde{g})\right) \widetilde{\mathbb{P}}_{n;\frac{1}{2}}^{\text{ER}}(\{\widetilde{g}\})}\,.
		\end{equation}
We now express the Hamiltonian in terms of homomorphism densities; to do so, we introduce the function
		\begin{equation}
			\label{U}
			U_{\alpha, h}(G) := \tfrac{\alpha}{6} t(H_2,G) + \tfrac{h}{2} t(H_1,G),
		\end{equation}
so that  $\mathcal{H}_{n;\alpha,h}(G) = n^2 U_{\alpha, h}(G)$. For each $n \geq 1$ and each Borel  set $\widetilde{A} \subseteq \widetilde{\mathcal{W}}$, we define the probabilities
		\begin{equation}\label{tilted_measures}
			\widetilde{\mathbb{Q}}_{n;\alpha,h}(\widetilde{A}) := \frac{\sum_{\widetilde{g} \in \widetilde{A}} \exp \left(n^2U_{\alpha, h}(\widetilde{g})\right) \widetilde{\mathbb{P}}_{n;\frac{1}{2}}^{\text{ER}}(\widetilde{g})}{\sum_{\widetilde{g} \in \widetilde{\mathcal{W}}} \exp\left(n^2U_{\alpha, h}(\widetilde{g})\right) \widetilde{\mathbb{P}}_{n;\frac{1}{2}}^{\text{ER}}(\widetilde{g})}.
		\end{equation}
		Since $U_{\alpha, h}$ is a continuous and bounded function on the metric space $(\widetilde{W},\delta_\square)$ (see~\cite{BCLSV08,BCLSV12}), by \cite[Thm.~II.7.2]{E},  the sequence $\{\widetilde{\mathbb{Q}}_{n;\alpha,h}\}_{n \geq 1}$ of probability measures satisfies a large deviation principle with speed~$n^2$ and rate function
		\begin{equation}\label{tilted_rate_function}
			\mathcal{I}_{\alpha,h}(\widetilde{g}) := \mathcal{I}_{\frac{1}{2}}(\widetilde{g}) - U_{\alpha, h}(\widetilde{g}) - \inf_{\widetilde{g} \in \widetilde{\mathcal{W}}} \left\{ \mathcal{I}_{\frac{1}{2}}(\widetilde{g}) - U_{\alpha, h}(\widetilde{g}) \right\}.
		\end{equation}
The function $\mathcal{I}_{\frac{1}{2}}$ is lower semicontinuous (see \cite[Lem.~2.1]{CV11}), and, therefore $\mathcal{I}_{\alpha,h}$ is lower-semincontinuous as well, as it is the sum of lower-semincontinuous functions; thus it admits a minimizer on the compact space $\widetilde{\mathcal{W}}$. In particular, for $(\alpha, h) \in \mathcal{M}^{rs}$  the minimizers of \eqref{tilted_rate_function} are given by the set $\widetilde{C}^*=\{\widetilde{u}^*_1, \widetilde{u}^*_2\}$, where $\widetilde{u}^*_1$ and $\widetilde{u}^*_2$ are the images in $\widetilde{\mathcal{W}}$ of the solutions ${u}^*_1, {u}^*_2$ to the scalar problem \eqref{free_energy} (we know that they are exactly two thanks to \cite[Prop.~3.2]{RY}). For all sufficiently small  $\varepsilon>0$, we define the open interval
	\begin{equation*} 
	J(\varepsilon):= ({u_1^{*}}^{3}-\varepsilon,{u_1^{*}}^3+\varepsilon) \cup  ({u_2^{*}}^3-\varepsilon,{u_2^{*}}^3+\varepsilon)
	\end{equation*}
	and we consider the sets
	\begin{equation*}
	\widetilde{C}^*_{\varepsilon} := \{\tilde{g}\in \widetilde{\mathcal{W}}: t(H_2,\widetilde{g})\notin J(\varepsilon) \}
	\quad  \mbox{and} \quad
	C^*_{\varepsilon} :=  \left\{G\in \mathcal{G}_{n}: \frac{6T _n(G)}{n^{3}}\notin J(\varepsilon)\right\}.
	\end{equation*}
It is important to observe that, due to \eqref{repr_pf} and \eqref{tilted_measures},
 $\widetilde{\mathbb{Q}}_{n;\alpha,h}(\widetilde{C}^*_{\varepsilon}) = \mathbb{P}_{n;\alpha,h}\left(C^*_{\varepsilon} \right)$. Moreover,
	 $\widetilde{C}^*_{\varepsilon}$ does not contain any element of $\widetilde{C}^*$, indeed, for the constant graphons $\widetilde{u}^*_i, i\in\{1,2\}$, it holds $t(H_2,\widetilde{u}^*_i)={{u}^*_i}^3 \in J(\varepsilon) \Rightarrow \widetilde{C}^*_{\varepsilon}\cap \widetilde{C}^*=\emptyset$. Hence, since $\widetilde{C}^*_{\varepsilon}$ is closed and does not contain any minimizer of \eqref{tilted_rate_function}, Thm.~II.7.2(b) in \cite{E} guarantees that, for sufficiently large $n$, there is some positive constant $\tau=\tau(\widetilde{C}^*_{\varepsilon})$ such that $\widetilde{\mathbb{Q}}_{n;\alpha,h}(\widetilde{C}^*_{\varepsilon}) \leq e^{-n^2 \tau}$.
		The thesis follows since 
		\begin{equation*}
			\mathbb{P}_{n;\alpha,h}\left(\frac{6T_n}{n^3}\in J(\varepsilon)\right)= 1- \widetilde{\mathbb{Q}}_{n;\alpha,h}(\widetilde{C}^*_{\varepsilon}) \ge 1-e^{-n^{2}\tau}\,. \qedhere
		\end{equation*}
	\end{proof}
When $(\alpha,h) \in \mathcal{U}^{rs}$, namely we work in the uniqueness regime, the proof above can be carried out exactly in the same way, and it gives exponential convergence of the sequence $\left( 6T_n/n^3\right)_{n \geq 1}$ to ${u_0^*}^{3}$. Indeed in $\mathcal{U}^{rs}$, the set of minimizers of \eqref{tilted_rate_function} coincides with the singleton $\widetilde{C}^*=\{\widetilde{u}_0^*\}$, where $\widetilde{u}_0^*$ is the image in $\widetilde{\mathcal{W}}$ of the unique solution $u_0^*$ to the scalar problem \eqref{free_energy}.
As pointed out in \cite[Thm.~\ref{Thm_convergence_in_distribution}]{BCM},  as a byproduct of this proof, we obtain an LDP for $\widetilde{\mathbb{Q}}_{n;\alpha,h}$:
\begin{remark}[\cite{BCM}, Rem.~7.5]\label{Rmk_tilted_LDP}
The sequence $(\widetilde{\mathbb{Q}}_{n;\alpha,h})_{n \geq 1}$ obeys a large deviation principle on the space $(\widetilde{W},\delta_{\square})$, with speed $n^2$ and rate function $\mathcal{I}_{\alpha,h}$. 
\end{remark}	
	\subsection{SLLN}
	We are now ready to prove the strong law of large numbers stated in Thm.~\ref{Thm_LLN}.
	\begin{proof}[Proof of Theorem \ref{Thm_LLN}]
The thesis immediately follows as a consequence of Borel-Cantelli  lemma, since exponential convergence provided by Thm.~\ref{Thm_convergence_in_distribution} implies almost sure convergence (see \cite{E}, Thm.~II.6.4). We stress that this almost sure convergence holds w.r.t.~a probability measure $\P_{\a,h}$ on the space $\left(\{0,1\}^{ \N},\mathcal{B}(\{0,1\}^{\N})\right)$,  with marginals corresponding to the measures $\P_{n;\a,h}$, for all $n\in\N$ (see Rem.~\ref{consist_cond}). 
	\end{proof}
\paragraph{Generalization to a generic simple graph.}
\begin{proof}[Proof of Thm.~\ref{genHk}]
The first part of the proof follows exactly the same steps as Thm.~\ref{Thm_convergence_in_distribution}.  Then, we define 
the open interval 
	\begin{equation*} 
	J(\varepsilon):= ({u_1^{*}}^{E(H_k)}-\varepsilon,{u_1^{*}}^{E(H_k)}+\varepsilon) \cup  ({u_2^{*}}^{E(H_k)}-\varepsilon,{u_2^{*}}^{E(H_k)}+\varepsilon)
	\end{equation*}
	and we consider the sets
	\begin{equation*}
	\widetilde{C}^*_{\varepsilon} := \{\tilde{g}\in \widetilde{\mathcal{W}}: t(H_k,\widetilde{g})\notin J(\varepsilon) \}
	\quad  \mbox{and} \quad
	C^*_{\varepsilon} :=  \left\{G\in \mathcal{G}_{n}: t(H_k, G) \notin J(\varepsilon)\right\}.
	\end{equation*}
Consider now the constant graphons $\widetilde{u}^*_i, i\in\{1,2\}$. As it happens for the triangle subgraph density, it holds $t(H_k,\widetilde{u}^*_i)={{u}^*_i}^{E(H_k)} \in J(\varepsilon) \Rightarrow \widetilde{C}^*_{\varepsilon}\cap \widetilde{C}^*=\emptyset$.
Notice that if $H_k$ is, say, a path of length $E(H_k)$, or a $E(H_k)$-star \footnote{
A $E(H_k)$-star is an undirected graph with $E(H_k)$ edges meeting at a \textquotedblleft root" vertex.}, or a clique with $E(H_k)$ edges, this distinction is not registered at the level of $t(H_k,\widetilde{u}^*_i)$. Indeed, being $\widetilde{u}^*_i$ a constant function, the subgraph density \eqref{def_subgr_density} returns in both cases the same value ${{u}^*_i}^{E(H_k)}$. The only difference is visible in the homomorphism density $t(H_k, G)$ that, depending on the subgraph we are taking into account, contains a different constant. Since $\widetilde{C}^*_{\varepsilon}$ is closed and does not include any minimizer of \eqref{tilted_rate_function}, by Thm.~II.7.2(b) in \cite{E} we finally conclude that 
$
\mathbb{P}_{n;\alpha,h}\left(t(H_k,\cdot)\in J(\varepsilon)\right)  \ge 1-e^{-n^{2}\tau},\,
$
being $\tau=\tau(\widetilde{C}^*_{\varepsilon})$ some positive constant.
\end{proof}

\begin{proof}[Proof of Thm.~\ref{Thm_LLN_Hk}]
When $(\alpha,h) \in \mathcal{U}^{rs}$, namely we work in the uniqueness regime, the proof of Thm.~\ref{genHk} gives exponential convergence of the sequence $\left( t(H_k, \cdot)\right)_{n \geq 1}$ to ${u_0^*}^{E(H_k)}$. The almost sure convergence immediately follows as a consequence of Borel-Cantelli  lemma, as for the proof of  Thm.~\ref{Thm_LLN}.
\end{proof}

\section{Proofs: mean-field approximation} \label{proofs-mf}
\subsection{Preliminaries}
We start from some preliminaries that are preparatory to the proofs of this section. First, we observe that the Hamiltonian $\bar{\mathcal{H}}_{n;\a,h}$ (given in \eqref{Hamilt_MF}), which is defined on  $\mathcal{A}_n$, is actually a function of the edges density $m\equiv m(x):=2E_n(x)/n^2$,  $x \in \cA_n$, taking values in the set $\G_n:=\left\{0, \frac{2}{n^2},\frac{4}{n^2},\frac{6}{n^2}, \dots, 1-\frac{1}{n}\right\}$. In particular, for all $x\in\cA_n$ such that $\frac{2E_n(x)}{n^2}= m$,
we have
\be\label{H_mf2}
\bar{\mathcal{H}}_{n;\alpha,h}(x)= \bar{\mathcal{H}}_{n;\alpha,h}(m)= n^2 \left(\frac{\alpha}{6} m^{3} + \frac{h}{2} m \right).
\ee
As a consequence, we can also write, with a little abuse of notation,
	\be \label{mf-hamiltonian-scalar}
	\bar{\mathbb{P}}_{n;\a,h}(A)=\sum_{m \in A}\, \frac{\cN_m e^{n^2 (\frac{\alpha}{6}m^{3} + \frac{h}{2}m)}}{\bar{Z}_{n;\alpha,h}} \,, \quad \text{ for } A \subseteq \G_n,
	\ee
	where $\cN_m:=\binom{\frac{n(n-1)}{2}}{\frac{n(n-1)m}{2}}$ coincides with the number of adjacency matrices in $\cA_n$ with edge density $m$.
From representation \eqref{mf-hamiltonian-scalar} arises another important function, that we call \emph{energy function}; for any $(\alpha,h) \in \mathbb{R}^2$, it is defined as
\be\label{g}
g_{\a,h}(m):= \frac{\alpha}{6}m^{3} + \frac{h}{2}m - \frac{I(m)}{2} \, , \quad \text{ for } m \in \G_n.
\ee
The first two terms coincide with the exponent  in \eqref{mf-hamiltonian-scalar}, whereas the entropic term $I(m)$, defined below \eqref{scalar_probl}, comes from the Stirling approximation of the binomial coefficient $\cN_m$. 
Let $f_{\a,h}$ be the infinite volume 
free energy of the edge-triangle model, as given in \eqref{free_energy}, and let $u^*_i$, $i=0,1,2$, or $u^*_c$ be a solution of \eqref{FixPointEq}, depending on the range of $(\alpha,h)$. We stress that, by construction,  $g_{\alpha,h}(u^*_i)=f_{\a,h}$ if $(\alpha,h)\neq (\alpha_c,h_c)$, and $g_{\alpha_c,h_c}(u^*_c)=f_{\a_c,h_c}$.
\paragraph{Neighborhoods of the maximizer(s).} Fix $0<\delta<1$. We will mainly work in the following neighborhoods of the maximizer(s) (whose definition was anticipated in \eqref{Bi_primo}):
	\begin{align}
		B_{u_i^*} &\equiv B_{u_i^*}(n,\d) = \{m\in\G_n\,:\, |m-u_i^*|\leq n^{-\d}\}, \quad i=0,1,2\, \label{Bi}\\
  		B_{u_c^*} &\equiv B_{u_c^*}(n,\d) = \{m\in\G_n\,:\, |m-u_c^*|\leq n^{-\d}\} \label{Bc}
	\end{align}
 making extensively use of the Taylor expansions 
\begin{align}
\begin{cases}
	g_{\a,h}(m)-g_{\a,h}(u_i^*)=-c_i(m-u^*_i)^2 + k_i(m-u^*_i)^3 &
	\mbox{ if } (\alpha,h) \neq(\a_c,h_c) \vspace{0.2cm} \label{taylor_g} \\
	g_{\a,h}(m)-g_{\a,h}(u_c^*)=-\frac{81}{64}(m-u_c^*)^4 + k_c (m-u_c^*)^5 &
	\mbox{ if } (\alpha,h) = (\a_c,h_c),
 \end{cases}
\end{align}
where 
\begin{align} \label{c}
		c_i &:= -\frac{g''_{\a,h} (u_i^*)}{2} = \frac{1-2\alpha(u_i^*)^2(1-u_i^*)}{4u_i^*(1-u_i^*)} >0, \\
		k_i&:= g'''_{\a,h} (\tilde{u}_i)/6, \quad  k_c:=g^{(v)}_{\a_c,h_c} (\tilde{u}_c)/5! \label{k},
\end{align}
for some $\tilde{u}_i,\tilde{u}_c$ such that $|\tilde{u}_i-u^*_i|< n^{-\d}$, $|\tilde{u}_c-u_c^*|< n^{-\d}$.
\paragraph{Lattice sets.}
As a result of suitable changes of variables, obtained as fluctuations of $m\in\Gamma_n$,
we will need to consider the following integration ranges:
\begin{align} 
		R^{(n)}_{i,\delta} &:= \left(-n^{1-\delta},n^{1-\delta}\right) \cap
		\left\{ -nu^*_i, -nu^*_i+\frac{2}{n},\dots, -nu^*_i+(n-1) \right\},  \label{Ri} \\
		R^{(n)}_{c,\delta} &:= \left(-n^{\frac 12-\delta},n^{\frac 12-\delta}\right) \cap
		\left\{ -\sqrt{n}u_c^*, -\sqrt{n}u_c^*+\frac{2}{n^{\frac 32}},\dots, -\sqrt{n}u_c^*+\frac {n-1}{\sqrt{n}} \right\},\label{Rc}
\end{align}
where $i=0,1,2$.
The following lemma shows how the main contribution to $\bar{Z}_{n;\alpha,h}$ is given by sums over these sets.
\begin{lemma}[\cite{BCM}, Lemma 8.1]\label{lemma_Z}
	Let $(\alpha,h) \in (-2,+\infty) \times \mathbb{R}$ and let $f_{\alpha,h}$ be the infinite volume free energy  of the edge-triangle model given in \eqref{free_energy}.
\begin{itemize}

\item[]		Fix  $\d\in (0,1)$. If $(\alpha,h) \neq(\a_c,h_c)$ and $c_i, k_i$ as in \eqref{c}--\eqref{k}, let
		\begin{align} \label{Di}
			D_i^{(n)} := \sum_{x\in R^{(n)}_{i,\delta}} \frac{2}{n} \frac{e^{- c_i x^2 + \frac{k_i}{n} x^3}}
			{\sqrt{\left(u_i^*+\frac{x}{n}\right) \left(1-u_i^* -\frac{x}{n}\right)}}, \quad \text{for $i=0,1,2$.}
		\end{align}
\item[]		Fix  $\d\in \left(0,\frac{3}{8} \right)$. If $(\alpha,h) = (\alpha_c, h_c)$ and $k_c$ as in \eqref{k}, let
		\begin{equation} \label{Dc}
			D_c^{(n)} := \sum_{x\in R^{(n)}_{c,\delta}} \frac{2}{n^{3/2}}\frac{e^{- \frac{81}{64} x^4 + \frac{k_c}{\sqrt{n}} x^5}}
			{\sqrt{\left(u_c^*+\frac{x}{\sqrt{n}}\right) \left(1-u_c^* -\frac{x}{\sqrt{n}}\right)}}.
		\end{equation}
\end{itemize}
	Then, as $n \to +\infty$,
	\be\label{claim}
	\bar{Z}_{n;\a,h}=  \frac{e^{n^2 f_{\a,h}}}{2\sqrt{\pi }} \left( D^{(n)}(\alpha,h)\right) (1+o(1)),
	\ee
	where
	\[
	D^{(n)}(\alpha,h) :=
	\begin{cases}
		D_0^{(n)} & \text{ if } (\alpha,h) \in \mathcal{U}^{rs}\setminus \{(\a_c,h_c)\} \\[.2cm]
		D_1^{(n)} + D_2^{(n)} & \text{ if }  (\alpha,h) \in \mathcal{M}^{rs} \\[.2cm]
		 D_c^{(n)} & \text{ if } (\alpha,h) = (\alpha_c,h_c)
	\end{cases}.
	\]
\end{lemma}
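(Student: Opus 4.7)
The proof is a Laplace/saddle-point analysis of the one--dimensional sum \eqref{mf-hamiltonian-scalar}. The first step is to apply Stirling's formula to $\cN_{m}$: uniformly for $m$ in compact subsets of $(0,1)$, the entropic term emerging from Stirling combines with the Hamiltonian exponent so that
\[
\cN_{m}\,\exp\!\bigl(n^{2}(\tfrac{\alpha}{6}m^{3}+\tfrac{h}{2}m)\bigr)\,=\,\frac{\exp\!\bigl(n^{2}g_{\alpha,h}(m)\bigr)}{\sqrt{\pi n(n-1)\,m(1-m)}}\,(1+o(1)),
\]
with $g_{\alpha,h}$ as in \eqref{g}. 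Thus $\bar Z_{n;\alpha,h}$ becomes, up to a multiplicative $(1+o(1))$, a sum of $e^{n^{2}g_{\alpha,h}(m)}$ weighted by the smooth prefactor $[\pi n(n-1)\,m(1-m)]^{-1/2}$, whose asymptotics are controlled by the maximizer(s) of $g_{\alpha,h}$.

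Next, I would localize by decomposing $\bar Z_{n;\alpha,h}=\mathrm{Bulk}+\mathrm{Tail}$, with $\mathrm{Bulk}:=\sum_{m\in\bigcup_{i}B_{u_{i}^{*}}}$ and $\mathrm{Tail}$ the complementary sum. By continuity of $g_{\alpha,h}$ and the Taylor expansions \eqref{taylor_g}, outside every neighborhood $B_{u_{i}^{*}}$ one has $g_{\alpha,h}(m)\le f_{\alpha,h}-\eta_{n}$, with $\eta_{n}\asymp n^{-2\delta}$ in the non-critical regime (since the leading quadratic $-c_{i}(m-u_{i}^{*})^{2}$ applies) and $\eta_{n}\asymp n^{-4\delta}$ at $(\a_c,h_c)$ (due to the quartic profile $-\tfrac{81}{64}(m-u_{c}^{*})^{4}$). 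Since $|\G_{n}|=O(n^{2})$, this yields $\mathrm{Tail}\le C\,n^{2}\,e^{n^{2}f_{\alpha,h}}\,e^{-c\,n^{2}\eta_{n}}$, which is subexponentially smaller than $e^{n^{2}f_{\alpha,h}}$ provided $\delta<1$ (non-critical) and $\delta<1/2$ (critical).

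For the $\mathrm{Bulk}$, I would work neighborhood by neighborhood. On $B_{u_{i}^{*}}$, I perform the change of variables $m=u_{i}^{*}+x/n$ off criticality---so that $\G_{n}$'s step $2/n^{2}$ becomes $\Delta x=2/n$ and the rescaled range is exactly $R^{(n)}_{i,\delta}$ of \eqref{Ri}---and $m=u_{c}^{*}+x/\sqrt{n}$ at the critical point, giving $\Delta x=2/n^{3/2}$ and rescaled range $R^{(n)}_{c,\delta}$ of \eqref{Rc}. Plugging \eqref{taylor_g} into the exponent gives
\[
n^{2}\bigl[g_{\alpha,h}(u_{i}^{*}+x/n)-f_{\alpha,h}\bigr]\,=\,-c_{i}x^{2}+\tfrac{k_{i}}{n}x^{3}+o(1),
\]
and analogously $-\tfrac{81}{64}x^{4}+\tfrac{k_{c}}{\sqrt{n}}x^{5}+o(1)$ at $(\a_c,h_c)$. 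Combining the smooth Stirling prefactor evaluated at $m=u_{i}^{*}+x/n$ (resp.\ $u_{c}^{*}+x/\sqrt n$) with the discrete step $\Delta x$ regroups precisely into the summand of $D_{i}^{(n)}$ (resp.\ $D_{c}^{(n)}$) times the overall factor $(2\sqrt{\pi})^{-1}e^{n^{2}f_{\alpha,h}}$. Summing over the relevant maximizer(s)---a single one on $\cU^{rs}\setminus\{(\a_c,h_c)\}$, two on $\cM^{rs}$, the critical one at $(\a_c,h_c)$---and absorbing the $\mathrm{Tail}$ into the overall $(1+o(1))$ yields \eqref{claim}.

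The main obstacle I anticipate is calibrating $\delta$. The window $B_{u_i^*}$ must be wide enough---on the natural fluctuation scale $1/n$ or $1/\sqrt n$---to capture essentially all the mass of the Gaussian/quartic density, yet narrow enough for the cubic/quintic remainders in \eqref{taylor_g} to be absorbed into the $o(1)$ exponent. The case $(\a_c,h_c)$ is the most delicate: the simultaneous vanishing of $g''$ and $g'''$ forces the rescaling $1/\sqrt n$, and a careful trade-off between the quartic cutoff, the quintic remainder $\tfrac{k_c}{\sqrt n}x^{5}$, and the higher-order corrections produces the sharper threshold $\delta<3/8$ stated in the lemma.
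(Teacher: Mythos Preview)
Your proposal is correct and follows the standard Laplace-method route. Note that the paper does not give its own proof of this lemma---it is imported from \cite{BCM}---but the machinery you outline (the Stirling approximations \eqref{Stirling}--\eqref{Stirling-rough}, the localization on the sets $B_{u_i^*}$ with complementary tail bounds as in \eqref{compl1}--\eqref{avC}, the Taylor expansions \eqref{taylor_g}, and the changes of variable $m=u_i^*+x/n$ and $m=u_c^*+x/\sqrt n$ producing the lattice ranges $R^{(n)}_{i,\delta}$, $R^{(n)}_{c,\delta}$) is precisely the toolkit the paper deploys throughout Section~\ref{proofs-mf} in the proofs that \emph{use} this lemma, so your argument is fully aligned with the intended one.
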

\begin{remark}
	Lemma~\ref{lemma_Z} directly proves \eqref{equiv_fren}.
\end{remark}
\begin{remark}\label{norm_w}
	The quantities defined in \eqref{Di}--\eqref{Dc} are Riemann sums with volume elements respectively given by $2/n$ and $2/n^{3/2}$. Indeed the points $x \in R^{(n)}_i$, $i={0,1,2}$ (resp. inside $R^{(n)}_c$) are evenly spaced with gaps of length $2/n$ (resp.~$2/{n}^{3/2}$). Hence,
\begin{align}
D_i^{(n)} &\xrightarrow{\; n \to +\infty \;} 
		D_i:=  2\sqrt{\pi\left[{1-2\alpha \left( u_i^*\right)^2(1-u_i^*)}\right]^{-1}}, \qquad i\in\{0,1,2\}  \label{Dilim}\\
 D_c^{(n)} &\xrightarrow{\; n \to +\infty \;} 
D_c:=  \frac{3}{\sqrt{2}}\int_{-\infty}^{\infty}e^{-\frac{81}{64}x^4} dx \approx  3.63\,, \label{Dclim}
\end{align}
where for \eqref{Dclim} we used $u^*_c=\frac{2}{3}$.
Later, these terms will play the role of normalization weights.
 \end{remark}
\subsection{SLLN}
\begin{proof}[Proof of Theorem~\ref{Thm_LLN_mf}.]
By retracing the same passages of Thm.~\ref{Thm_convergence_in_distribution} we can prove exponential convergence of the sequence $\left(6\bar{T}_n/n^3\right)_{n\in\mathbb{N}}$ to ${u_0^*}^3$. By multiplying and dividing \eqref{mf-hamiltonian-scalar} by $2^{\binom{n}{2}}$ we obtain the analog of representation \eqref{tilted_measures} for the measure $\bar{\mathbb{P}}_{n;\a,h}$:
	\be
	\bar{\mathbb{P}}_{n;\a,h}(A)= \frac{\sum_{m \in A}\, e^{n^2 (\frac{\alpha}{6}m^{3} + \frac{h}{2}m)}\mathbb{P}^{\text{ER}}_{n;\frac{1}{2}}(m)}{\sum_{m \in \Gamma_n}\, e^{n^2 (\frac{\alpha}{6}m^{3} + \frac{h}{2}m)}\mathbb{P}^{\text{ER}}_{n;\frac{1}{2}}(m)} \,, \quad \text{ for } A \subseteq \G_n.
	\ee
Indeed, $\mathbb{P}^{\text{ER}}_{n;\frac{1}{2}}(m)= \cN_m/2^{\binom{n}{2}}$, where we recall that $\cN_m=\binom{\frac{n(n-1)}{2}}{\frac{n(n-1)m}{2}}$ coincides with the number of adjacency matrices in $\cA_n$ with edge density $m$. The same steps of Thm.~\ref{Thm_convergence_in_distribution}, which in particular rely on Ellis result \cite[Thm.~II.7.2]{E}, lead then to 
\begin{equation}
			\bar{\mathbb{P}}_{n;\alpha,h}\left(\frac{6\bar{T}_n}{n^3}\in J(\varepsilon)\right)  \ge 1-e^{-n^{2}\tau}\,.
\end{equation}
Here, $J(\varepsilon):= ({u_0^{*}}^{3}-\varepsilon,{u_0^{*}}^3+\varepsilon)$, $\varepsilon>0$, being $u^{*}_0$ the unique maximizer of \eqref{free_energy} (recall that $(\alpha,h)\in \mathcal{U}^{rs}$), and $\tau$ a positive constant depending on the complement set $J^{c}(\varepsilon)$.
Finally, almost sure convergence follows from exponential convergence as a consequence of Borel-Cantelli  lemma (\cite{E}, Thm.~II.6.4). 
\end{proof}

\subsection{Phase coexistence on the critical curve}
\begin{proof}[Proof of Theorem~\ref{Thm_convergence_in_distribution_mf}.]
	We will determine the limit of 
 \begin{equation*}
 \mathbb{\bar{E}}_{n;\alpha,h} \left[ \varphi \left( 6\bar{T}_n/n^3 \right)\right],
 \end{equation*}
 for any continuous and bounded real function $\varphi$. First we observe that, since $m\equiv m(x)=\frac{2 E_n(x)}{n^2}$, from definition \eqref{def:bT_n} we obtain $\bar{T}_n(x)= \frac{n^3 m^3}{6}$. Then, using \eqref{mf-hamiltonian-scalar}, we get:
\begin{equation}\label{media}
	\bar{\E}_{n;\a,h}\left[\varphi \left(\frac{6\bar{T}_n}{n^3}\right) \right]
	= \sum_{m\in\G_n} \varphi(m^3) \, \frac{\cN_m e^{n^2 (\frac{\alpha}{6}m^{3} + \frac{h}{2}m)}}{\bar{Z}_{n;\alpha,h}}.
\end{equation}
	We split the sum in \eqref{media} over the sets $B_{u_1^*}$, $B_{u_2^*}$ given in \eqref{Bi}, and $\cC \equiv \mathcal{C}(n,\d) := \G_n \setminus \left(B_{u_1^*}\cup B_{u_2^*}\right)$,

considering the three contributions separately. 
First we observe that, from \cite[Chap.~2, Eq.~(2.11)]{FV}, there exists two positive constants $c$ and $C$ such that
	\begin{equation} \label{Stirling-rough}
		cne^{-\frac{n^2}2 I(m)}\le\cN_m \le Ce^{-\frac{n^2}2 I(m)}.
	\end{equation}
Whenever we work inside the sets $B_{u_i^*}$, $i={1,2}$, the bounds in \eqref{Stirling-rough} can be made more precise, because 
$n^{-2}\ll m \ll 1-n^{-2}$ and, consequently, $n^2m \to \infty$ and $n^2(1-m)\to \infty$, as $n\to\infty$. Hence, the following Stirling approximation is valid
\begin{equation}\label{Stirling}
		\mathcal{N}_{m}= \frac{e^{-\frac{n^2}{2}I(m)}}{n \sqrt{\pi m(1-m)}} (1+o(1))\,.
	\end{equation}
	This, together with Lemma~\ref{lemma_Z}, yields the following representation:
	\begin{equation*}
	\begin{split}
		\sum_{m\in B_{u_i^*}} \varphi(m^3) \frac{\cN_m e^{n^2 (\frac{\alpha}{6}m^{3} + \frac{h}{2}m)}}{\bar{Z}_{n;\a,h}}
		&= \sum_{m\in B_{u_i^*}} \frac 2 n \frac{\varphi(m^3)}{\sqrt{m(1-m)}}
		\frac{e^{-n^2 \left( f_{\a,h} - g_{\a,h}(m)\right)}}{D_1^{(n)}+D_2^{(n)}} (1+o(1))\,,
	\end{split}
	\end{equation*}
 where $i\in\{1,2\}$ and $g_{\alpha,h}$ is the energy function defined in \eqref{g}.
By performing the change of variable $x=n(m-u_i^*)$, and using the Taylor expansion \eqref{taylor_g}, we obtain:
	\be\nonumber
	\begin{split}
		\sum_{m\in B_{u_i^*}} \varphi(m^3) \frac{\cN_m e^{n^2 (\frac{\alpha}{6}m^{3} + \frac{h}{2}m)}}{\bar{Z}_{n;\a,h}}=&
		\sum_{x\in R_{i,\delta}^{(n)}} \frac 2 n \frac{\varphi((u_i^* +\frac{x}{n})^3)}{\sqrt{(u_i^* +\frac{x}{n})(1-u_i^* -\frac{x}{n})}}
		\frac{e^{-c_i x^2 + \frac{k_i}{n} x^3}}{D_1^{(n)}+D_2^{(n)}} (1+o(1))\\
		&\xrightarrow{\;\; n \to +\infty \;\;} \,
		\varphi({u_i^*}^3) \frac{D_i}{D_1+ D_2}\,,
	\end{split}
	\ee
	where $D_i$ is defined in \eqref{Dilim} and $R^{(n)}_{i,\delta}$, given in \eqref{Ri}, represents the range of values of $x$.
To conclude the analysis, we show that the sum over the remaining set $\cC $ provides no contribution in the limit. Outside the sets $B_{u_i^*}$, $i=1,2$, the Stirling approximation \eqref{Stirling} is not valid anymore. However, from \eqref{Stirling-rough} we deduce: 
	\begin{equation}\label{compl1}
		\sum_{m \in \cC} \cN_me^{n^2(\frac \a 6 m^3 +\frac h 2 m)}\le Cn\sum_{m \in \cC} e^{n^2g_{\a,h}(m)}< \frac C2 n^3 e^{n^2 \max_{m\in \cC}g_{\a,h}(m)}, 
	\end{equation}
where the last inequality is due to the fact that $\cC$ contains at most $\binom{n}{2}$ points.
Since $f_{\a,h}=g_{\a,h}(u^*_i)$, $i=1,2$ we obtain 
	\begin{equation}\label{compl2}
		e^{n^2 \max_{m\in \cC}g_{\a,h}(m)}=e^{n^2f_{\a,h}}e^{-n^2(g_{\a,h}(u^*_i)-\max_{m\in \cC}g_{\a,h}(m))}\le e^{n^2f_{\a,h}}e^{-kn^{2-2\d}},
	\end{equation}
	where $k>0$ is a constant that does not depend on $n$ and $\delta\in (0,1)$. The last inequality follows from the Taylor expansion \eqref{taylor_g}, exploiting the fact that $|m-u^*_i|>n^{-\d}$ for $i=1,2$ and for all $m\in\mathcal{C}$.
As a consequence, from  the rough Stirling approximation \eqref{Stirling-rough}
and the consequent  rough bound $\bar{Z}_{n;\a,h} \geq cn^{-1} e^{n^2 f_{\a,h}}$,
we get
	\begin{equation} \label{avC}
		\frac{\sum_{m \in \cC}\cN_me^{n^2(\frac \a 6 m^3 +\frac h 2 m)}}{\bar{Z}_{n;\a,h}} < c^{-1}Cn^4e^{-kn^{2-2\delta}} \xrightarrow{\;\; n \to +\infty \;\;} 0,
	\end{equation}
being $2-2\delta>0$ by assumption. In conclusion,
	$$\lim_{n\to+\infty} \bar{\E}_{n;\a,h}\left[\varphi \left(\frac{6\bar{T}_n}{n^3}\right) \right]
	= \varphi({u_1^*}^3) \frac{D_1}{D_1+ D_2}+\varphi({u_2^*}^3) \frac{D_2}{D_1+ D_2}\,.
	$$
	This proves the thesis.
\end{proof}
\subsection{Mean-field convergence error}
\begin{proof}[Proof of Proposition \ref{prop_speed_mmf}]
	Let $(\alpha,h) \in \mathcal{U}^{rs}$ and $u_0^*=u^*_0(\a,h)$ be the unique maximizer which solves the fixed point equation \eqref{FixPointEq}. The proof implements the same machinery of Thm.~\ref{Thm_convergence_in_distribution_mf}.
First we observe that 
\begin{equation}\label{avG}
\bar{\E}_{n;\a,h}\Big( \Big\vert \frac{6\bar{T}_n}{n^3} - {u_0^*}^3 \Big\vert\Big)
		= \sum_{m\in \Gamma_n}|m^3-{u_0^*}^3|
		\frac{\cN_m e^{n^{2}(\frac{\alpha}{6}m^{3} + \frac{h}{2}m)}}
		{\bar{Z}_{n;\a,h}}.
\end{equation}
We split the average in \eqref{avG} in two parts, one over $B_{u_0^*}$ given in \eqref{Bi}, and the other over $\cC \equiv \mathcal{C}(n,\d) := \G_n \setminus B_{u_0^*}$.
The contribution of the average over $\cC$ is negligible, exploiting exactly the same argument in \eqref{compl1}--\eqref{avC} with $u_0^{*}$ in place of $u_i^{*}$ ($i=1,2$), and bounding, very roughly,  $|m^3-{u_0^{*}}^3|=|m-u_0^{*}|(m^2 +mu_0^{*}+{u_0^{*}}^2)$ by the constant $3$.
We now focus on the sum over $B_{u_0^*}$. By Lem.~\ref{lemma_Z} and the Stirling approximation \eqref{Stirling}, we obtain
	\be\label{prima}
	\begin{split}
		\bar{\E}_{n;\a,h}\Big( \Big\vert \frac{6\bar{T}_n}{n^3} - {u_0^*}^3 \Big\vert\Big)
		&= \sum_{m\in B_{u_0^*}}|m^3-{u_0^*}^3|
		\frac{\cN_m e^{n^{2}(\frac{\alpha}{6}m^{3} + \frac{h}{2}m)}}
		{\bar{Z}_{n;\a,h}}(1+o(1))\\
		&= \sum_{m\in B_{u_0^*}} \frac 2 n\frac{|m^3-{u_0^*}^3|}{\sqrt{m(1-m)}}\frac{e^{-n^2 (f_{\a,h}-g_{\a,h}(m))}}
		{D^{(n)}_0}(1+o(1))\,,
	\end{split}
	\ee
 where we recall that $\bar{T}_n(x)= \frac{n^3 m^3}{6}$ and $ B_{u_0^*}$ is defined in \eqref{Bi}.
 First, we analyze the case $(\alpha,h) \in \mathcal{U}^{rs}\setminus \{(\a_c,h_c)\}$ and then we move to the critical point.
From the Taylor expansion~\eqref{taylor_g} we get
	\begin{equation}
		\label{seconda}
		\begin{split}
			\bar{\E}_{n;\a,h}\Big( \Big\vert \frac{6\bar{T}_n}{n^3} - {u_0^*}^3 \Big\vert\Big)&=
			\sum_{m\in B_{u_{0}^*}} \frac 2 n\frac{|m^3-{u_0^*}^3|}{\sqrt{m(1-m)}}
			\frac{e^{-n^{2}c_0(m-u_0^*)^2 + n^2 k_0(m-u_0^*)^3}}
			{D_0^{(n)}}(1+o(1)).
		\end{split}
	\end{equation}
We now perform the change of variable $x= n(m-{u_0^*})$, and we use the identity
	\begin{equation} \label{cubic-expansion}
		m^3-{u_0^*}^3=\left(u_0^*+\frac x {{n}}\right)^3-{u_0^*}^3= 3{u_0^*}^2 \frac{x}{{n}} +3 {u_0^*}\frac{x^2}{n^2}+\frac{x^3}{n^{3}},
	\end{equation}
thus obtaining
	\begin{equation*}
		\begin{split}
			n \cdot \bar{\E}_{n;\a,h}
			\Big( \Big\vert \frac{6\bar{T}_n}{n^3} - {u_0^*}^3 \Big\vert\Big)&=
			\sum_{x\in R_{0,\delta}^{(n)}} \frac 2 n
			\frac{| 3{u_0^*}^2 {x} +3 {u_0^*}\frac{x^2}{n}+\frac{x^3}{n^{2}}|
				\cdot e^{-c_0x^2 + \frac{k_0}{n}x^3 }}
			{\sqrt{\left(u_0^*+\frac{x}{{n}}\right) \left(1-u_0^* -\frac{x}{{n}}\right)}\cdot D_0^{(n)}}(1+o(1)),
		\end{split}
	\end{equation*}
	where $R_{0,\delta}^{(n)}$ is defined in \eqref{Ri} and the constants $c_0$ and $k_0$ are given in \eqref{c}--\eqref{k}. 
We can upper and lower bound the sum above using the following chain of inequalities: $|a|-|b| \leq ||a|-|b|| \leq |a+b| \leq |a|+|b|$, $a,b\in\mathbb{R}$, with $a:=3{u_0^*}^2 {x}$ and $b:=3 {u_0^*}\frac{x^2}{n}+\frac{x^3}{n^{2}}$. Consider the term   
\begin{equation}\label{a}
	\sum_{x\in R_{0,\delta}^{(n)}} \frac 2 n
			\frac{| 3{u_0^*}^2 {x}|
				\cdot e^{-c_0x^2 + \frac{k_0}{n}x^3 }}
			{\sqrt{\left(u_0^*+\frac{x}{{n}}\right) \left(1-u_0^* -\frac{x}{{n}}\right)}\cdot D_0^{(n)}}(1+o(1)),
\end{equation}
together with the sequence of probability densities
	\begin{equation}\label{dens_nc}
	\ell_n(x):= \frac 2 n
	\frac{
		e^{-c_0x^2 + \frac{k_0}{n}x^3 }}
	{\sqrt{\left(u_0^*+\frac{x}{{n}}\right) \left(1-u_0^* -\frac{x}{{n}}\right)}\cdot D_0^{(n)}}\1_{R_0^{(n)}} (x)\,,\quad x\in\R\,,
	\end{equation}
	where $D_0^{(n)}$ is the normalization weight defined in \eqref{Di}.
	If, for every $n \in \N$, $X_n$ is a random variable with density $\ell_n$, then
 \begin{equation}
	\sum_{x\in R_{0,\delta}^{(n)}} \frac 2 n
			\frac{| 3{u_0^*}^2 {x}|
				\cdot e^{-c_0x^2 + \frac{k_0}{n}x^3 }}
			{\sqrt{\left(u_0^*+\frac{x}{{n}}\right) \left(1-u_0^* -\frac{x}{{n}}\right)}\cdot D_0^{(n)}}(1+o(1))=\E(|3{u_0^*}^2X_n|)(1+o(1)).
\end{equation}
Collecting all contributions we have:
\begin{align}
&\E(|3{u_0^*}^2X_n|)(1+o(1)) - \left(\int_{\mathbb{R}}\left|3 {u_0^*}\frac{x^2}{n}+\frac{x^3}{n^{2}}\right|\ell_{n}(x) dx\right) (1+o(1)) \label{final_sum_a} \\
&\hspace{3cm  } \leq n \cdot \bar{\E}_{n;\a,h}
			\Big( \Big\vert \frac{6\bar{T}_n}{n^3} - {u_0^*}^3 \Big\vert\Big) \leq  \notag \\ 
&  \E(|3{u_0^*}^2X_n|)(1+o(1)) + \left(\int_{\mathbb{R}}\left|3 {u_0^*}\frac{x^2}{n}+\frac{x^3}{n^{2}}\right|\ell_{n}(x) dx\right) (1+o(1)).    \label{final_sum_b}
\end{align}
Notice that, due to the convergence $D_{0}^{(n)} \xrightarrow[]{n \to +\infty} D_{0}$ (see Rem.~\ref{norm_w}) and the Scheff\'e Lemma, we obtain \mbox{$X_n \xrightarrow[]{\;\; d \;\;} X$}, where $X$ 
is a Gaussian random variable with density
\begin{equation}\label{liml}
\ell(x)=  \sqrt{\frac{c_0}{\pi}}e^{-c_0x^2}\,,\quad x\in\R\,.
\end{equation}
Moreover, the random variables $X_n$ have finite exponential  moments for any
sufficiently large $n$.	Therefore, by the dominated convergence theorem, applied to both bounds in \eqref{final_sum_a}--\eqref{final_sum_b}, we obtain
$$n\cdot\bar{\E}_{n;\a,h}\Big( \Big\vert \frac{6\bar{T}_n}{n^3} - {u_0^*}^3 \Big\vert\Big)\xrightarrow{\;\; n \to +\infty \;\;} \, 3{u_0^*}^2\E(|X|).$$
Indeed, the second summand in terms \eqref{final_sum_a}--\eqref{final_sum_b} vanishes, being $3 {u_0^*}\frac{x^2}{n}+\frac{x^3}{n^{2}}=o(1)$, for fixed $x$.
Setting $\bar{X}:= 3{u_0^*}^2 X$, and noticing that $X$ has variance $(2c_0)^{-1}$, we recover \eqref{speed_mmf1}.
	\vspace{0.3cm} \\
	\noindent
We now move to the critical case, so we consider $(\a,h)=(\a_c,h_c)$ and $u_c^*=u^*(\a_c,h_c)$. Here, the proof works exactly the same.
We split the average in \eqref{avG} in two parts, one over $B_{u_c^*}$ given in \eqref{Bc}, and the other over $\cC \equiv \mathcal{C}(n,\d) := \G_n \setminus B_{u_c^*}$.
The contribution of the average over $\cC$ is negligible, exploiting the same argument in \eqref{compl1}--\eqref{avC}. This time, by injecting the Taylor expansion \eqref{taylor_g} at the critical point in \eqref{compl2}, and using the fact that \mbox{$|m-u_c^*|^4>n^{-4\d}$} for $m\in\cC$, we get 
	\begin{equation*} 
		\frac{\sum_{m \in \cC}\cN_me^{n^2(\frac{\alpha_c}{6} m^3 +\frac{h_c}{2} m)}}{\bar{Z}_{n;\a_c,h_c}} < c^{-1}Cn^4e^{-kn^{2-4\delta}} \xrightarrow{\;\; n \to +\infty \;\;} 0, \qquad k>0,
	\end{equation*}
since $\delta<3/8$ by assumption. 
We now restrict the average \eqref{avG} to a sum in $B_{u_c^*}$.
In place of \eqref{seconda} we get:	
	\begin{equation}\nonumber
		\begin{split}
			\bar{\E}_{n;\alpha_c,h_c}
			\Big( \Big\vert \frac{6\bar{T}_n}{n^3} - {u_c^*}^3 \Big\vert\Big)
			&=
			\sum_{m\in B_{u_c^*}}
			\frac 2 {n^{\frac 32}}
			\frac{|m^3-{u_c^*}^3|
				e^{-n^2\frac{81}{64}(m-u_c^*)^4 + n^2{k_c}(m-u_c^*)^5}}
			{\sqrt{m \left(1-m\right)}\cdot  D_c^{(n)}} (1+o(1)),		
		\end{split}
	\end{equation}
where $B_{u_c^*}$ is defined in \eqref{Bc}, and the constant $k_c$ is given in \eqref{k}.
Notice that here the Taylor expansion \eqref{taylor_g} provides the fourth-order term $(m-u_c^*)^4$ at the exponent, while Lem.~\ref{lemma_Z} brings the normalization weight $D_c^{(n)}$. After the change of variable \mbox{$y:=\sqrt{n}(m-{u_c^*})$}, recalling that $u_c^*=\frac 2 3$ and by means of the identity 
	\begin{equation}\label{cubic-expansion_c}
		m^3-{u_c^*}^3=\Big(u_c^*+\frac y {\sqrt{n}}\Big)^3-{u_c^*}^3= \frac 4 3 \frac{y}{\sqrt{n}} +2\frac{y^2}{n}+\frac{y^3}{n^{\frac32}},
	\end{equation}
we obtain
	\begin{equation}\nonumber
		\begin{split}
			\sqrt{n}\cdot\bar{\E}_{n;\alpha_c,h_c}
			\Big( \Big\vert \frac{6\bar{T}_n}{n^3} - {u_c^*}^3 \Big\vert\Big)
			&=
			\sum_{y\in R_{c,\delta}^{(n)}}
			\frac 2 {n^{\frac 32}}
			\frac{|\frac 4 3 {y} +2\frac{y^2}{\sqrt{n}}+\frac{y^3}{n}|
				\cdot	e^{-\frac{81}{64}y^4 + \frac{k_c}{\sqrt n} y^5}}
			{\sqrt{\big(u_c^*+\frac{y}{\sqrt{n}}\big) \big(1-u_c^* -\frac{y}{\sqrt{n}}\big)}  D_c^{(n)}} (1+o(1)),
		\end{split}
	\end{equation}
	where $R_{c,\delta}^{(n)}$ is given in \eqref{Rc}. For every $n\in\mathbb{N}$, let $Y_n$ be a real random variable with Lebesgue density
\be\label{densc}
\ell^c_n(y):= \frac{2}{n^{3/2}} 
\frac{e^{-\frac{81}{64}y^4  + \frac{k_c}{\sqrt{n}}y^5 }}
{\sqrt{(u_c^*+\frac{y}{\sqrt n})(1-u_c^*-\frac{y}{\sqrt n})} \cdot D_c^{(n)}} \, \1_{R_{c,\delta}^{(n)}} (y)\,,\quad y\in\R\, .
\ee
Notice that $D_c^{(n)}$ provides the right normalization rate. The random variable $Y_n$ has finite  exponential moments for any sufficiently large $n$.
 By Scheff\'e Lemma and dominated convergence theorem, we conclude 
 $$\sqrt{n}\cdot\bar{\E}_{n;\alpha_c,h_c}
			\Big( \Big\vert \frac{6\bar{T}_n}{n^3} - {u_c^*}^3 \Big\vert\Big)\xrightarrow{\;\; n \to +\infty \;\;} \, \frac{4}{3}\E(|Y|),
$$
where $Y$ is a generalized Gaussian random variable with Lebesgue density 
$\ell^c(y)\propto e^{-\frac{81}{64}y^4}$. Setting
 $\bar Y:=\frac 4 3 Y$, since the scale parameter of $Y$ is $\frac{2^{3/2}}{3}$, we deduce that $\bar{Y}$ is a generalized Gaussian random variable with scale parameter $\frac{2^{7/2}}{3^2}$, thus proving the thesis.
\end{proof}
With the same strategy we can immediately prove the following corollary.
\begin{proof}[Proof of Corollary \ref{cor_speed_mf}]
Recall that $\bar m^{\Delta}_n(\a,h)= \bar{\E}_{n;\a,h}\big( \frac{6\bar{T}_n}{n^3}\big)$. By following the proof of Prop.~\ref{prop_speed_mmf}, we obtain,
\begin{itemize}
\item for all
	$(\alpha,h) \in \mathcal{U}^{rs}\setminus \{(\a_c,h_c)\}$,
	$$n\cdot\left( \bar m^{\Delta}_n(\a,h)-{u_0^*}^3 (\a,h)\right)
	= \E\left(3 {u_0^*}^2 X_n\right)(1+o(1))  +o(1) \, \xrightarrow{\;\; n \to +\infty \;\;} \, \E(\bar X)=0,$$
\item for $(\alpha,h)=(\a_c,h_c)$ 
	$$\sqrt{n}\cdot\left( \bar m^{\Delta}_n(\a_c,h_c)-{u^*}^3(\a_c,h_c)\right)
	= \E\Big( \frac4 3Y_n\Big)(1+o(1)) +o(1) \, \xrightarrow{\;\; n \to +\infty \;\;} \, \E(\bar Y)=0.$$
 \end{itemize}
\end{proof}

\subsection{CLT (off the critical curve)}
We use  Cor.~\ref{cor_speed_mf} to prove Thms.~\ref{Thm_standard_CLT} and \ref{Thm_non-standard_CLT}.
We start with Thm.~\ref{Thm_non-standard_CLT} at the critical point.
\proof[Proof of Theorem~\ref{Thm_non-standard_CLT}]
Let $u_c^{*}=u^{*}(\alpha_c,h_{c})$. We consider the decomposition
\be\label{equivalenza}
6 \, \frac{\frac{\bar{T}_n}{n} - \frac{n^{2}}{6}\bar m^{\Delta}_n(\a_c,h_c)}{n^{3/2}}
= \bar U_n
+ \sqrt{n}\left({u_c^{*}}^3 - \bar m^{\Delta}_n(\a_c,h_c)\right),
\ee
where 
\begin{equation*}
	\bar{U}_n:= 6 \, \frac{\frac{\bar{T}_n}{n} - \frac{n^{2}}{6}{u_c^{*}}^3}{n^{3/2}}.
\end{equation*}
By \eqref{speed_mf2} and Slutsky theorem, 
it is enough to study the convergence in distribution of the variable $6(\frac{\bar{T}_n}{n} - \frac{n^{2}}{6}{u_c^*}^3)/n^{3/2}$.
We show that, for any $t\in\R$,
\be\label{goal}
\bar{M}_n (t):=\bar{\mathbb{E}}_{n;\alpha_c,h_c}\Big(e^{t \cdot	\bar{U}_n
}\Big) \, \xrightarrow{\;\; n \to +\infty \;\;} \,
\int_{\mathbb{R}}e^{ty}\bar \ell^c(y)dy,
\ee
where $\bar \ell^c$ is given in the statement. Again, we split the average in \eqref{goal} in two parts, one over $B_{u_c^*}$ given in \eqref{Bc}, and the other over $\cC \equiv \mathcal{C}(n,\d) := \G_n \setminus \left(B_{u_c^*}\right)$. We obtain:
\begin{align}
\sum_{m\in \cC}
\frac{\cN_m e^{t\sqrt{n}(m^3-{u_c^{*}}^3) +n^{2}(\frac{\alpha_c}{6}m^{3} + \frac{h_c}{2}m)}}
{\bar{Z}_{n;\a_c,h_c}}
&\overset{\eqref{Stirling-rough}}{\leq} \sum_{m\in \cC}
\frac{ C e^{3t\sqrt{n} +n^{2}g_{\alpha_c,h_c}(m)}}
{\bar{Z}_{n;\a_c,h_c}} \notag\\
&\leq 
\frac{C n^3 e^{3t\sqrt{n} +n^{2}f_{\a_c,h_c}-n^2 (g_{\a,h}(u^*_c)- \max_{m\in\cC} g_{\a_c,h_c}(m))}}{\bar{Z}_{n;\a_c,h_c}}\notag\\
&\leq c^{-1}C n^3 e^{3t\sqrt{n}-kn^{2-4\delta}} \xrightarrow{\;\; n \to +\infty \;\;} 0, \label{lim0}
\end{align}
for some constant $k>0$.
In the second to last inequality we used that $f_{\a_c,h_c}=g_{\a_c,h_c}(u^*_c)$, and that the set $\mathcal{C}$ contains at most $\binom{n}{2}$ elements. In the last inequality we used the Taylor expansion \eqref{taylor_g} at the critical point, the  rough bound $\bar{Z}_{n;\a,h} \geq cn^{-1} e^{n^2 f_{\a,h}}$ coming from \eqref{Stirling-rough}, and the fact that \mbox{$|m-u_c^*|^4>n^{-4\d}$} for $m\in\cC$. The assumption $\d<3/8$ guarantees that $2-4\d>1/2$.
 As a consequence of \eqref{lim0}, we can reduce the average in \eqref{goal} into a sum on $B_{u_c^*}$:
\be\label{base}
\begin{split}
	\bar{M}_n (t) &= \sum_{m\in B_{u_c^*}} 
	\frac{\cN_m e^{ t\sqrt{n}(m^3-{u_c^*}^3)+ n^{2}(\frac{\alpha_c}{6}m^{3} + \frac{h_c}{2}m)}}
	{\bar{Z}_{n;\a_c,h_c}}(1+o(1))\\
	&= \sum_{m\in B_{u_c^*}}\frac 2 {n^{\frac 32}}\frac{e^{t\sqrt{n}(m^3-{u_c^{*}}^3)-n^2 (f_{\a_c,h_c}-g_{\a_c,h_c}(m))}}
	{\sqrt{m(1-m)} \cdot D_c^{(n)}}(1+o(1))\,,
\end{split}
\ee
where the last identity is due to Lem.~\ref{lemma_Z} and the Stirling approximation \eqref{Stirling}.
Injecting in \eqref{base} the Taylor expansion \eqref{taylor_g} at the critical point, we get
\begin{equation}
	\begin{split}
		\bar{M}_n (t)&=
		\sum_{m\in B_{u_c^*}} \frac 2 {n^{\frac 32}}
		\frac{e^{t\sqrt{n}(m^3-{u_c^{*}}^3)-\frac{81}{64}n^{2}(m-u_c^{*})^{4}+ k_c n^{2}(m-u_c^{*})^{5}}}
		{\sqrt{m(1-m)} \cdot D_c^{(n)}}(1+o(1)).
	\end{split}
\end{equation}
By the change of variable $y=\sqrt{n}(m-u_c^{*})$, and recalling that
$u_c^{*}(\alpha_c,h_{c})=\frac{2}{3}$, we find
\begin{equation}
	\begin{split}
		\bar{M}_n (t)&=
		\sum_{y\in R_{c,\delta}^{(n)}} \frac 2 {n^{\frac 32}}
		\frac{e^{t(\frac 4 3y +2\frac{y^2}{\sqrt{n}}+\frac{y^3}{n})} \cdot e^{ -\frac{81}{64}y^{4} +  k_c \frac{y^5}{\sqrt{n}}}}
		{ \sqrt{\left(u_c^*+\frac{y}{\sqrt{n}}\right) \left(1-u_c^* -\frac{y}{\sqrt{n}}\right)} D_c^{(n)}}
		(1+o(1)).
	\end{split}
\end{equation}
Exploiting the range of $R_{c,\delta}^{(n)}$, given in \eqref{Rc}, we observe that 
$- n^{1/2-3\delta} <2\frac{y^2}{\sqrt{n}}+\frac{y^3}{n}<2n^{1/2-2\delta} + n^{1/2-3\delta}$. By isolating the term 
\begin{equation}\label{Mstar}
\bar{M}^{*}_n (t):=\sum_{y\in R_{c,\delta}^{(n)}} \frac 2 {n^{\frac 32}}
		\frac{e^{t\frac 4 3y } \cdot e^{ -\frac{81}{64}y^{4} +  k_c \frac{y^5}{\sqrt{n}}}}{ \sqrt{\left(u_c^*+\frac{y}{\sqrt{n}}\right) \left(1-u_c^* -\frac{y}{\sqrt{n}}\right)} D_c^{(n)}}
		(1+o(1))
\end{equation}
 we obtain:
\begin{equation}\label{sandw0}
e^{-tn^{1/2-3\delta}}\bar{M}^{*}_n (t)\leq \bar{M}_n (t) \leq 
 e^{t\left(2n^{1/2-2\delta} + n^{1/2-3\delta}\right)}\bar{M}^{*}_n (t).
\end{equation}
In \eqref{Mstar} we recognize the probability density
$\ell_n^c$ of the random variable $Y_n$, introduced in \eqref{densc}. From  \eqref{Mstar} we then deduce that $\bar{M}^{*}_n (t)= \E(e^{t\frac43Y_n})(1+o(1))\,$. By Scheff\'e  Lemma $Y_n$ converges in distribution to a generalized Gaussian random variable $Y$ with Lebesgue density \mbox{$\ell^c(y)\propto e^{-\frac{81}{64}y^4}$}, therefore $$\bar{M}^{*}_n (t)= \E(e^{t\frac43Y_n})(1+o(1))\xrightarrow{\;\; n \to +\infty \;\;} \E(e^{t\frac43Y}).$$ 
With the further constraint  $\frac{1}{4}<\delta<\frac{3}{8}$, from \eqref{sandw0} it holds $\bar{M}_n (t)\xrightarrow{n \to +\infty} \E(e^{t\frac43Y}).$ By setting $\bar Y:=\frac 4 3 Y$ we conclude the proof.
\endproof
\begin{proof}[Proof of Theorem~\ref{Thm_standard_CLT}]
The proof runs exactly as for the critical case, therefore we provide below only a sketch with the main differences. The object that we want to study is in this case the random variable
	\begin{equation}\label{decomp_cltst}
		\sqrt{6} \, \frac{\frac{\bar{T}_n}{n} - \frac{n^{2}}{6}\bar m^{\Delta}_n(\a,h)}{n}= \bar{V}_n+\frac{n}{\sqrt{6}}\big({u^*}^3-\bar m^{\Delta}_n(\a,h)\big),
	\end{equation}
	where
	\begin{equation*}
		\bar V_n:=\sqrt{6} \, \frac{\frac{\bar{T}_n}{n} - \frac{n^{2}}{6} {u_0^*}^3}{n}.
	\end{equation*}
By \eqref{speed_mf1} and Slutsky theorem, it is enough to study the moment generating function of of $\bar V_n$, restricting again the analysis on the neighborhood $B_{u^{*}_0}$ (the contribution over the set $\cC \equiv \mathcal{C}(n,\d) := \G_n \setminus B_{u_0^*}$ can be treated as in \eqref{lim0}, with $0<\delta<1$). To simplify constants, we consider $\sqrt{6}\bar V_n$ instead of $\bar V_n$. We then get:
	\begin{equation}
		\begin{split}
			\bar{M}_n (t)
			&=
			\sum_{m\in B_{u_0^*}} \frac 2 n\frac{1}{\sqrt{m(1-m)}} 
			\frac{e^{t{n}(m^3-{u_0^{*}}^3)-c_0n^{2}(m-u_0^{*})^{2}+ k_0 n^{2}(m-u_0^{*})^{3}}}
			{D_0^{(n)}}(1+o(1)).\\
		\end{split}
	\end{equation}
The change of variable \mbox{$x={n}(m-{u_0^{*}})$}, identity \eqref{cubic-expansion}, and the Taylor expansion \eqref{taylor_g} yield
	\begin{equation}
		\begin{split}
			\bar{M}_n (t)
			&=
			\sum_{x\in R_{0,\delta}^{(n)}} \frac 2 n
			\frac{e^{t(3{u_0^*}^2 {x} +3 {u_0^*}\frac{x^2}{n}+\frac{x^3}{n^{2}})} \cdot e^{ -c_0 x^{2} +  k_0 \frac{x^3}{n}}}
			{ \sqrt{\left(u_0^*+\frac{x}{{n}}\right) \left(1-u_0^* -\frac{x}{{n}}\right)} D_0^{(n)}}
			(1+o(1))\,.
		\end{split}
	\end{equation}
As in the proof of Thm.~\ref{Thm_non-standard_CLT}, let 
\begin{equation}\label{Mstar2}
\bar{M}^{**}_n (t):=\sum_{x\in R_{0,\delta}^{(n)}} \frac 2 n
			\frac{e^{3t{u_0^*}^2 x} \cdot e^{ -c_0 x^{2} +  k_0 \frac{x^3}{n}}}
			{ \sqrt{\left(u_0^*+\frac{x}{{n}}\right) \left(1-u_0^* -\frac{x}{{n}}\right)} D_0^{(n)}}(1+o(1)).
\end{equation}
Furthermore, by exploiting the range of $R_{0,\delta}^{(n)}$, given in \eqref{Ri} we obtain
\begin{equation}\label{sandw}
e^{-tn^{1-3\delta}}\bar{M}^{**}_n (t)\leq \bar{M}_n (t) \leq 
 e^{t\left(3u_0^* n^{1-2\delta} + n^{1-3\delta}\right)}\bar{M}^{**}_n (t).
\end{equation}
In \eqref{Mstar2} we recognize the probability density
$\ell_n$ of the random variable $X_n$, introduced in \eqref{dens_nc}; we then rewrite \eqref{Mstar2} as $\bar{M}^{**}_n (t)= \E(e^{3t {u_0^{*}}^2 X_n})(1+o(1))$.
By Scheff\'e Lemma $X_n$ converges in distribution to a real random variable $X$ with Gaussian density $\ell$ given in \eqref{liml}, therefore $$\bar{M}^{**}_n (t)= \E(e^{3t {u_0^{*}}^2 X_n})(1+o(1))\xrightarrow{\;\; n \to +\infty \;\;} \E(e^{3t {u_0^{*}}^2 X}).$$ 
With the further constraint  $\frac{1}{2}<\delta<1$, from \eqref{sandw}, it holds $\bar{M}_n (t)\xrightarrow{n \to +\infty} \E(e^{3t {u_0^{*}}^2 X})$.
Note that $X$ is a centered Gaussian random variable with variance  $(2c_0)^{-1}$, where
$c_0\equiv c_0(\a,h)=\frac{1-2\alpha[u_0^*(\alpha,h)]^2[1-u_0^*(\alpha,h)]}{4u_0^*(\alpha,h)[1-u_0^*(\alpha,h)]}.$
In conclusion, $\bar V_n$ converges in distribution to the centered Gaussian random variable $3{u_0^{*}}^2 X/\sqrt{6}$, with variance  $\bar{v}_0^{\Delta}(\a,h)=\frac{3{u^*_0}^4}{4c_0}$, as wanted.
\end{proof}
\paragraph{Generalization to a clique graph.} 
\begin{proof}[Proof of Thm.~\ref{Thm_LLN_mf_g}]
From Thm.~\ref{Thm_LLN_mf} we know that
$\left(\frac{2E_n}{n^2}\right)^3 \overset{\text{a.s.}}{\longrightarrow} {u_0^*}^3$ w.r.t.~$\bar{\mathbb{P}}_{\alpha,h}$. By continuous mapping theorem it follows that $\left(\frac{2E_n}{n^2}\right)^{\binom{\ell}{2}} \overset{\text{a.s.}}{\longrightarrow} {u_0^*}^{\binom{\ell}{2}}$ w.r.t.~$\bar{\mathbb{P}}_{\alpha,h}$, thus recovering \eqref{SLLN_gen}.\\
In order to prove \eqref{conv_distr_gen} one can proceed as in Thm.~\ref{Thm_convergence_in_distribution_mf}. The final goal is to determine the limit of $ \mathbb{\bar{E}}_{n;\alpha,h} \left[ \varphi \left(\frac{ \aut\bar{K}_n}{n^{\ell}} \right)\right]$ for any continuous and bounded real function $\varphi$. Since $m\equiv m(x)=\frac{2 E_n(x)}{n^2}$, from the definition of $\bar{K}_n$  we obtain $\bar{K}_n(x)= \frac{n^{\ell} m^{\binom{\ell}{2}}}{\aut}$. Using \eqref{mf-hamiltonian-scalar}, we get:
\begin{equation}\label{media_gen}
	\bar{\E}_{n;\a,h}\left[\varphi \left(\frac{ \aut\bar{K}_n}{n^{\ell}} \right) \right]
	= \sum_{m\in\G_n} \varphi(m^{\binom{\ell}{2}}) \, \frac{\cN_m e^{n^2 (\frac{\alpha}{6}m^{3} + \frac{h}{2}m)}}{\bar{Z}_{n;\alpha,h}}.
\end{equation}
One can then retrace exactly the same steps of the proof of Thm.~\ref{Thm_convergence_in_distribution_mf}, since the unique difference consists in the factor $\varphi(m^{\binom{\ell}{2}})$. Indeed, the measure (second factor in \eqref{media_gen}) remains unchanged. 
\end{proof}
\begin{proof}[Proof of Thm.~\ref{generalized_Thm_standard_CLT}]
We consider the decompositions
\begin{align*}
\sqrt{\aut} \, \frac{\frac{\bar{K}_n}{n^{\ell-2}} - \frac{n^{2}}{\aut}\bar m_n^{K}(\a,h)}{n}&= \bar{V}^{(\ell)}_n+\frac{n}{\sqrt{\aut}}\big({u^{*}_0}^{\binom{\ell}{2}}-\bar m^{K}_n(\a,h)\big), \\
\aut \, \frac{\frac{\bar{K}_n}{n^{\ell-2}} - \frac{n^{2}}{\aut}\bar m^{K}_n(\a_c,h_c)}{n^{3/2}}
&= \bar U^{(\ell)}_n
+ \sqrt{n}\left({u_c^{*}}^{\binom{\ell}{2}} - \bar m^{K}_n(\a_c,h_c)\right),
\end{align*}
where $\bar{V}^{(\ell)}_n:=\sqrt{\aut} \, \frac{\frac{\bar{K}_n}{n^{\ell-2}} - \frac{n^{2}}{\aut} {u_0^*}^{\binom{\ell}{2}}}{n}$ and $\bar{U}^{(\ell)}_n:= \aut \, \frac{\frac{\bar{K}_n}{n^{\ell-2}} - \frac{n^{2}}{\aut}{u_c^{*}}^{\binom{\ell}{2}}}{n^{3/2}}$. One can easily check that Cor.~\ref{cor_speed_mf}  can be adapted to this setting. It is enough to replace identities \eqref{cubic-expansion}--\eqref{cubic-expansion_c} respectively by
$\left(u_0^{*} +\frac{x}{n}\right)^{\binom{\ell}{2}} - {u_0^{*}}^{\binom{\ell}{2}}= \binom{\ell}{2}\left(u_0^*\right)^{\binom{\ell}{2}-1}\frac xn (1+o(1))$, where $x:=n(m-{u_0^*})$,  and $\big(u_c^{*} +\frac{y}{\sqrt{n}}\big)^{\binom{\ell}{2}} - {u_c^{*}}^{\binom{\ell}{2}}= \binom{\ell}{2}\left(\frac{2}{3}\right)^{\binom{\ell}{2}-1}\frac{y}{\sqrt{n}} (1+o(1))$, where $y:=\sqrt{n}(m-{u_c^*})$. Thanks to Cor.~\ref{cor_speed_mf} and Slutsky theorem we can reduce to study the convergence in distribution of the random variables $\bar{V}^{(\ell)}_n$ and $\bar{U}^{(\ell)}_n$. Adapting the proof of Thm.~\ref{Thm_standard_CLT} one can conclude that 
\begin{equation*}
\begin{aligned}
\bar{V}^{(\ell)}_n &\xrightarrow{\;\;\mathrm{d}\;\;} \bar Q:=\frac{\binom{\ell}{2}(u^*_0)^{\binom{\ell}{2}-1}}{\sqrt{\aut}}X &&\quad\text{w.r.t. } \bar{\mathbb{P}}_{n;\alpha,h},\text{ as } n \to +\infty, \\
\bar{U}^{(\ell)}_n &\xrightarrow{\;\;\mathrm{d}\;\;} \bar W:=\binom{\ell}{2}\left(\frac{2}{3}\right)^{\binom{\ell}{2}-1} Y &&\quad\text{w.r.t. } \bar{\mathbb{P}}_{n;\alpha_c,h_c},\text{ as } n \to +\infty, 
\end{aligned}
\end{equation*}
where $X$ is a centered Gaussian random variable with variance  $(2c_0)^{-1}$, being $c_0$ defined in Thm.~\ref{Thm_standard_CLT}, and $Y$ has Lebesgue density $\ell^c(y)\propto e^{-\frac{81}{64}y^4}$. Hence $\bar{Q}$
has variance  $\bar v^{K}(\alpha,h) := \nicefrac{\Big(\tbinom{\ell}{2}{u_0^*}^{\binom \ell2-1}\Big)^2}{2\aut c_0}$, and $\bar{W}$ has scale parameter $\frac{2^{\binom{\ell}{2} + 1/2}}{3^{\binom{\ell}{2}}}\binom{\ell}{2}$, as wanted.
\end{proof}
\subsection{Conditional measures}

\proof[Proof of Proposition \ref{prop_speed_conditional_mfm}] 
Let $(\alpha,h) \in \mathcal{M}^{rs}$ and let $u^*_i=u^*_i(\a,h)$, $i=1,2$ the two solutions of the scalar problem \eqref{free_energy}.  The proof of this proposition can be carried on exactly as the proof of the analog Prop.~\ref{prop_speed_mmf}, but in the conditional setting introduced in Subsec.~\ref{condm}. Without loss of generality, we consider the case $i=1$: 
\begin{align*}
	\hat{\mathbb{E}}_{n;\a,h}^{(1)}\left( \left\vert \frac{6\bar{T}_n}{n^3} - {u_1^*}^3 \right\vert \right) 
	&= \sum_{m\in B_{u^*_1}} |m^3-{u_1^*}^3| \frac{\cN_m \, e^{n^{2}(\frac{\alpha}{6}m^{3} + \frac{h}{2}m)}} {\bar{Z}_{n;\a,h}(B_{u_1^*})}\\
	&= \sum_{m\in B_{u^*_1}} \frac 2 n\frac{|m^3-{u_1^*}^3|}{\sqrt{m(1-m)}} \frac{e^{-n^2 (f_{\a,h}-g_{\a,h}(m))}}
	{D_1^{(n)}}(1+o(1)),
\end{align*}
where $\hat{\mathbb{E}}_{n;\a,h}^{(1)}$ is the expectation associated with the measure $\hat{\mathbb{P}}_{n;\alpha,h}^{(1)}$ defined in \eqref{conditional_measure}.
The Taylor expansion \eqref{taylor_g} and the change of variable $x= n(m-{u^*_1})$ yield
\[
n \cdot \hat{\mathbb{E}}^{(1)}_{n;\a,h}\left( \left\vert\frac{6\bar{T}_n}{n^3}-{u_1^*}^3\right\vert\right) =
\sum_{x\in R^{(n)}_{1,\delta}}  \frac 2 n
\frac{| 3{u_1^*}^2 {x} +3 {u_1^*}\frac{x^2}{n}+\frac{x^3}{n^{2}}|
	\cdot  e^{-c_1x^2 + \frac{k_1}{n}x^3 }}
{\sqrt{\left(u_1^*+\frac{x}{{n}}\right) \left(1-u_1^* -\frac{x}{{n}}\right)}\cdot D_1^{(n)}}(1+o(1)),
\]
where $R^{(n)}_{1,\delta}$ 
is as in \eqref{Ri}. If $(X^{(1)}_n)_{n \geq 1}$ is a sequence of random variables with probability density
\be \label{ell_1}
\ell_n^{(1)}(x):= \frac 2 n 
\frac{e^{-c_1x^2 + \frac{k_1}{n} x^3 }}
{{\sqrt{\left(u_1^*+\frac{x}{{n}}\right) \left(1-u_1^* -\frac{x}{{n}}\right)}}\cdot D_1^{(n)}} \, \1_{R^{(n)}_{1,\delta}} (x),\quad x\in\R,
\ee
where $D_1^{(n)}$ is the normalization weight defined in \eqref{Di}, we obtain, as in \eqref{final_sum_a}--\eqref{final_sum_b},
\begin{align}
&\E(|3{u_1^*}^2X^{(1)}_n|)(1+o(1)) - \left(\int_{\mathbb{R}}\left|3 {u_1^*}\frac{x^2}{n}+\frac{x^3}{n^{2}}\right|\ell^{(1)}_{n}(x) dx\right) (1+o(1)) \label{final_sumc_a} \\
&\hspace{3cm  } \leq n \cdot \hat{\E}^{(1)}_{n;\a,h}
			\Big( \Big\vert \frac{6\bar{T}_n}{n^3} - {u_1^*}^3 \Big\vert\Big) \leq  \notag \\ 
&  \E(|3{u_1^*}^2X^{(1)}_n|)(1+o(1)) + \left(\int_{\mathbb{R}}\left|3 {u_1^*}\frac{x^2}{n}+\frac{x^3}{n^{2}}\right|\ell^{(1)}_{n}(x) dx\right) (1+o(1)). \label{final_sumc_b}
\end{align}

Arguing as at the end of proof \ref{prop_speed_mmf} we conclude
$$n\cdot\hat{\E}^{1}_{n;\a,h}\Big( \Big\vert \frac{6\bar{T}_n}{n^3} - {u_1^*}^3 \Big\vert\Big) \xrightarrow{\;\; n \to +\infty \;\;} \, 3{u_1^*}^2\E(|X^{(1)}|),$$
where $X^{(1)}$ is a standard Gaussian variable with variance $(2c_1)^{-1}$. Indeed, the second summand in \eqref{final_sumc_a}--\eqref{final_sumc_b} vanishes, being $3 {u_1^*}\frac{x^2}{n}+\frac{x^3}{n^{2}}=o(1)$, for fixed $x$.
Setting $\bar{X}^{(1)}:= 3{u_1^*}^2 X^{(1)}$, we obtain a random variable with variance $6\bar{v}_1^{\Delta}(\alpha,h) = \frac{9{u_i^*(\a,h)}^4}{2c_1}$, as wanted.
The same proof holds for the case $i=2$.
\endproof
\proof[Proof of Corollary \ref{cor_speed_conditional_mfm}]
The proof follows immediately, as for Cor.~\ref{cor_speed_mf}. 
\endproof
As mentioned, the next theorem is the analog of Thm.~\ref{Thm_LLN_mf} and Thm.~\ref{Thm_standard_CLT}, when the edge density is conditioned to take values in a neighborhood of the two maximizers of the scalar problem \eqref{free_energy}.

\proof[Proof of Theorem \ref{Thm_conditional_limit_theorems_mfm}]
Let $(\alpha,h) \in \mathcal{M}^{rs}$ and let $u^*_i=u^*_i(\a,h)$, $i=1,2$ the two solutions of the scalar problem \eqref{free_energy}. We focus on the case $i=1$, being the case $i=2$ completely analogous. We start proving \eqref{conditional_lln_mfm} via exponential convergence, which again implies the \emph{a.s.}~convergence by a standard Borel-Cantelli  argument (see~\cite{E}, Thm.~II.6.4 and Rem.~\ref{consist_cond}).
We define 
$$\mathcal{R} \equiv \mathcal{R}(\eta;n) := \left\{ m \in \Gamma_n: \eta \leq \left\vert m^3 - {u_1^*}^3 \right\vert < n^{-\delta} \right\}.$$ 
This is a circular crown of points with distance to ${u^*_1}^3$, between $\eta$ and $n^{-\delta}$. Notice that for fixed $\eta$, for large $n$, the set  $\mathcal{R}$ is empty. When this does not hold, we have
\begin{align}
	\hat{\mathbb{P}}^{(1)}_{n;\alpha,h} \left( \left\vert \frac{6\bar{T}_n}{n^3} - {u_1^*}^3\right\vert \geq \eta \right) &\leq \sum_{m \in \mathcal{R}}  \frac{\mathcal{N}_m e^{n^2 \left(\frac{\alpha}{6}m^3+\frac{h}{2}m\right)}}{\bar{Z}_{n;\alpha,h}(B_{u_1^*})} \notag\\
	&\leq  C c^{-1} n^4 e^{-n^2 (f_{\alpha,h}-\max_{m \in \mathcal{R}} g_{\alpha,h}(m))} \notag\\
	&\leq C c^{-1} n^4 e^{-n^2 \min_{m \in \mathcal{R}} (f_{\alpha,h} - g_{\alpha,h}(m))}, \label{exprob}
\end{align}
where in the second to last passage we used the rough bound $\bar{Z}_{n;\alpha,h}(B_{u_1^*}) \geq cn^{-1} e^{n^2 f_{\alpha,h}}$ coming from Stirling approximation \eqref{Stirling-rough}.
As stated in~\cite[Prop.~3.2]{RY}, for sufficiently large $n$,  the function $f_{\alpha,h} - g_{\alpha,h}(m)$, restricted to the neighborhood  $B_{u_1^*}$, is positive, convex and admits $u_1^*$ as unique zero. Hence 
\[
\min_{m \in \mathcal{R}} (f_{\alpha,h} - g_{\alpha,h}(m)) = \min\{f_{\alpha,h} - g_{\alpha,h}(u_1^*-\eta), f_{\alpha,h} - g_{\alpha,h}(u_1^*+\eta)\}>0,
\]
When $\mathcal{R}$ is nonempty, the probability in \eqref{exprob} vanishes, as $n\to\infty$. This provides the desired exponential convergence, for every choice of $\eta>0$.
We now move to the proof of \eqref{conditional_clt_mfm}.  By means of decomposition \eqref{decomp_cltst}, and Cor.~\ref{cor_speed_conditional_mfm}  we can reduce our analysis to the random variable 
	\begin{equation*}
		\bar V^{(1)}_n:=\sqrt{6} \, \frac{\frac{\bar{T}_n}{n} - \frac{n^{2}}{6} {u_1^*}^3}{n},
	\end{equation*}
studying, for any $t\in\R$, its moment generating function
\[
\hat{M}_n(t) := \hat{\mathbb{E}}^{(1)}_{n;\alpha,h}\left(e^{t
	\bar V^{(1)}_n
}\right).
\]
We consider $\sqrt{6}\bar V^{(1)}_n$ instead of $\bar{V}^{(1)}_n$ (to simplify constants), and we follow the same line of arguments as in the proof of Thm.~\ref{Thm_standard_CLT}.
We  get:
	\begin{equation}
		\begin{split}
			\hat{M}_n (t)
			&=
			\sum_{m\in B_{u_1^*}} \frac 2 n\frac{1}{\sqrt{m(1-m)}} 
			\frac{e^{t{n}(m^3-{u_1^{*}}^3)-c_1n^{2}(m-u_1^{*})^{2}+ k_1 n^{2}(m-u_1^{*})^{3}}}
			{D_1^{(n)}}(1+o(1)).\\
		\end{split}
	\end{equation}
The change of variable \mbox{$x={n}(m-{u_1^{*}})$}, identity \eqref{cubic-expansion}, and the Taylor expansion \eqref{taylor_g} yield
	\begin{equation}
		\begin{split}
			\hat{M}_n (t)
			&=
			\sum_{x\in R_{1,\delta}^{(n)}} \frac 2 n
			\frac{e^{t(3{u_1^*}^2 {x} +3 {u_1^*}\frac{x^2}{n}+\frac{x^3}{n^{2}})} \cdot e^{ -c_1 x^{2} +  k_1 \frac{x^3}{n}}}
			{ \sqrt{\left(u_1^*+\frac{x}{{n}}\right) \left(1-u_1^* -\frac{x}{{n}}\right)} D_1^{(n)}}
			(1+o(1))\, ,
		\end{split}
	\end{equation}
where $R_{1,\delta}^{(n)}$ is defined in \eqref{Ri}. By defining 

\begin{equation}\label{Mstar2_hat}
\hat{M}^{**}_n (t):=\sum_{x\in R_{1,\delta}^{(n)}} \frac 2 n
			\frac{e^{3t{u_1^*}^2 x} \cdot e^{ -c_1 x^{2} +  k_1 \frac{x^3}{n}}}
			{ \sqrt{\left(u_1^*+\frac{x}{{n}}\right) \left(1-u_1^* -\frac{x}{{n}}\right)} D_1^{(n)}}(1+o(1)),
\end{equation}
we observe that  $\hat{M}^{**}_n (t)= \E(e^{3t {u_1^*}^2 {X}^{(1)}_n})(1+o(1))\,$, where, for each $n\in\mathbb{N}$, ${X}^{(1)}_n$ is a random variable with density $\ell^{(1)}_n(x)$ given in \eqref{ell_1}.

Notice that ${X}^{(1)}_n$ converges in distribution to a centered Gaussian random variable ${X}^{(1)}$ with variance  $(2c_1)^{-1}$, where
$c_1\equiv c_1(\a,h)=\frac{1-2\alpha[u_1^*(\alpha,h)]^2[1-u_1^*(\alpha,h)]}{4u_1^*(\alpha,h)[1-u_1^*(\alpha,h)]}.$

Therefore 
$$\bar{M}^{**}_n (t)= \E(e^{3t {u_1^*}^2 {X}^{(1)}_n})(1+o(1))\xrightarrow{\;\; n \to +\infty \;\;} \E(e^{3t {u_1^*}^2 {X}^{(1)}}).$$ 
With the further constraint  $\frac{1}{2}<\delta<1$, exploiting the same bounds as in \eqref{sandw}, we also obtain the convergence $\hat{M}_n (t)\xrightarrow{n \to +\infty} \E(e^{3t {u_1^*}^2 {X}^{(1)}})$ for all $t\in\R$.
In conclusion, $\bar{V}^{(1)}_n$ converges in distribution to the centered Gaussian random variable $3{u_1^*}^2 {X}^{(1)}/\sqrt{6}$, with variance  $\bar{v}_{1}^{\Delta}(\a,h)=\frac{3{u^*_1}^4}{4c_1}$, as wanted. The same proof works for $i=2$. 
\endproof

\section{Conjectures and simulations} \label{CLT_discussion}
The comparison between the mean-field approximation and the edge-triangle model respectively encoded by Hamiltonian \eqref{Hamilt_ERG} and \eqref{Hamilt_MF} remains an open problem. We refer the reader to \cite[Sec.~8.3]{BCM} for a discussion on the main difficulties in proving that they asymptotically coincide. 
However we believe that this is the case, and we report a list of conjectures based on the results obtained for the mean-field model. Some of them are supported by heuristics computations and simulations. 

\subsection{Conjectures and heuristics}
\begin{conjecture}\label{cong3}
	For all $(\alpha,h) \in \mathcal{M}^{rs}$,
	\[
	\frac{6 T_n}{n^3} \, \xrightarrow{\;\;\mathrm{d}\;\;}{} \, \kappa \delta_{{u_1^{*}}^3(\alpha,h)}+
	(1-\kappa)\delta_{{u_2^{*}}^3(\alpha,h)}  \quad \text{ w.r.t. } {\mathbb{P}}_{n;\alpha,h}, \text{ as } n \to +\infty,
	\]
	where $u_1^*$, $u_2^*$ solve the maximization problem in \eqref{free_energy}, and
	$$\kappa=\frac{
		\sqrt{\left[{1-2\alpha \left( u_1^*\right)^2(1-u_1^*)}\right]^{-1}}
	}
	{
		\sqrt{\left[{1-2\alpha \left( u_1^*\right)^2(1-u_1^*)}\right]^{-1}}
		+
		\sqrt{\left[{1-2\alpha \left( u_2^*\right)^2(1-u_2^*)}\right]^{-1}}
	} \,.
	$$
\end{conjecture}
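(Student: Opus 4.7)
The plan is to build upon the graphon large-deviation framework already employed in the proof of Thm.~\ref{Thm_convergence_in_distribution} and Rem.~\ref{Rmk_tilted_LDP}, and combine it with a Laplace-type second-order expansion of the tilted rate function $\mathcal{I}_{\alpha,h}$ around its two constant-graphon minimizers $\widetilde{u}_1^*,\widetilde{u}_2^*$. First, I would note that Thm.~\ref{Thm_convergence_in_distribution} already forces any subsequential weak limit of the law of $6T_n/n^3$ under $\mathbb{P}_{n;\alpha,h}$ to be supported on the set $\{{u_1^*}^3,{u_2^*}^3\}$, so the limit (if it exists) must be $\kappa\,\delta_{{u_1^*}^3}+(1-\kappa)\,\delta_{{u_2^*}^3}$; the whole problem reduces to identifying $\kappa$ and proving existence of the limit. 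Fixing shrinking neighbourhoods $V_i^\varepsilon$ of $\widetilde{u}_i^*$ in $(\widetilde{\mathcal{W}},\delta_\square)$, the weight would be extracted as
\begin{equation*}
\kappa=\lim_{\varepsilon\to 0}\lim_{n\to\infty}\frac{\widetilde{\mathbb{Q}}_{n;\alpha,h}(V_1^\varepsilon)}{\widetilde{\mathbb{Q}}_{n;\alpha,h}(V_1^\varepsilon)+\widetilde{\mathbb{Q}}_{n;\alpha,h}(V_2^\varepsilon)}.
\end{equation*}

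To compute this ratio, I would expand $\mathcal{I}_{\alpha,h}$ at $\widetilde{u}_i^*$ in a symmetric perturbation $h:[0,1]^2\to\mathbb{R}$. A direct computation yields the quadratic form
\begin{equation*}
\mathcal{Q}_i(h)=\frac{1}{2u_i^*(1-u_i^*)}\|h\|_{L^2}^2-2\alpha\,u_i^*\int_{[0,1]^3}h(x,y)h(y,z)\,dx\,dy\,dz,
\end{equation*}
which is positive definite throughout $\mathcal{M}^{rs}$ (its restriction to constants equals $\tfrac{1-2\alpha(u_i^*)^2(1-u_i^*)}{2u_i^*(1-u_i^*)}$, positive by the replica-symmetric condition). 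The heuristic Laplace asymptotic then produces $\widetilde{\mathbb{Q}}_{n;\alpha,h}(V_i^\varepsilon)\asymp C_i\,e^{-n^2\min\mathcal{I}_{\alpha,h}}$, where $C_i$ is the product of (renormalised) determinants of $\mathcal{Q}_i$. Decomposing $h$ into its constant component (which captures fluctuations of the edge density) and the orthogonal component, one expects the latter to contribute multiplicatively a factor that is universal across the two minimizers and therefore cancels in the ratio $C_1/(C_1+C_2)$; the constant direction alone reproduces exactly the mean-field weights $D_i\propto[1-2\alpha(u_i^*)^2(1-u_i^*)]^{-1/2}$ identified in Rem.~\ref{norm_w}, giving the claimed $\kappa$.

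The main obstacle is precisely the justification of this Laplace-type expansion on the infinite-dimensional, non-Hilbertian space $(\widetilde{\mathcal{W}},\delta_\square)$: the measure $\widetilde{\mathbb{Q}}_{n;\alpha,h}$ is supported on empirical graphons of finite pixel-resolution, the cut metric interacts poorly with $L^2$-type Gaussian integration, and the cancellation of the non-constant graphon fluctuations between the two minimizers requires a genuinely quantitative understanding of the spectral structure of $\mathcal{Q}_i$ together with a sharp entropy expansion beyond the leading LDP estimate. Yang--Lee methods are unavailable here, as the authors remark, because $Z_{n;\alpha,h}$ is not polynomial in $e^\alpha$. An alternative route worth attempting is to reduce the conjecture to the mean-field theorem Thm.~\ref{Thm_convergence_in_distribution_mf} via a coupling: show that under $\mathbb{P}_{n;\alpha,h}$ the triangle count $T_n$ is close enough to $\bar T_n=\tfrac{4E_n^3}{3n^3}$ (in an $o(n^2)$ sense at the free-energy level) that the joint law of $(E_n/n^2,T_n/n^3)$ converges to that under $\bar{\mathbb{P}}_{n;\alpha,h}$; however, quantifying this approximation sharply at the critical regime appears to require essentially the same fine control that makes Thm.~\ref{Thm_convergence_in_distribution_mf} significantly harder to transfer.
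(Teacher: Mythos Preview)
The statement you attempt to prove is labelled \emph{Conjecture} in the paper and is presented as an open problem; the paper does \emph{not} contain a proof of it. Section~\ref{CLT_discussion} explicitly introduces Conjs.~\ref{cong3}--\ref{cong2} as statements ``based on the results obtained for the mean-field model'', motivated by the analogy with Thm.~\ref{Thm_convergence_in_distribution_mf}. The heuristic large-deviation computations that follow in that section are given only for Conjs.~\ref{cong1} and~\ref{cong2} (the CLTs), and the final remark notes that even simulations supporting Conj.~\ref{cong3} were out of reach because of the exponential mixing time of the Glauber dynamics on $\mathcal{M}^{rs}$. So there is no ``paper's own proof'' to compare against.

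Your proposal is therefore not a proof to be checked against an existing one, but a research programme. As such it is sensible: you correctly observe that Thm.~\ref{Thm_convergence_in_distribution} already pins down the support of any limit point, and that the whole difficulty is the identification of the mixing weight $\kappa$; you also correctly locate the obstruction, namely the lack of a sharp (second-order) Laplace expansion for the graphon measure $\widetilde{\mathbb{Q}}_{n;\alpha,h}$ near two degenerate minimizers in the non-Hilbertian space $(\widetilde{\mathcal{W}},\delta_\square)$. The step where you claim that the non-constant graphon fluctuations contribute a factor that is ``universal across the two minimizers and therefore cancels in the ratio'' is precisely the unproved part: the Hessian $\mathcal{Q}_i$ depends on $u_i^*$ through \emph{both} the $L^2$ coefficient $[2u_i^*(1-u_i^*)]^{-1}$ and the interaction coefficient $2\alpha u_i^*$, so there is no a priori reason the infinite-dimensional determinants should coincide. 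This is exactly the gap the authors flag (see \cite[Sec.~8.3]{BCM}) when they say that a rigorous comparison between the edge-triangle model and its mean-field approximation ``remains an open problem''. Your alternative coupling route via $T_n\approx\bar T_n$ runs into the same wall, as you yourself note.
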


	\begin{conjecture}[CLT for $T_n$]\label{cong1}
	If $(\a,h) \in \mathcal{U}^{rs}\setminus \{(\a_c,h_c)\}$, 
	\[	
	\sqrt{6} \, \frac{\frac{{T}_n}{n} - \frac{n^{2}}{6} m^{\Delta}_n(\a,h)}{n} \xrightarrow{\;\;\mathrm{d}\;\;}{}  \mathcal{N}(0, v_0^\Delta(\alpha,h)) \quad \text{ w.r.t. } {\mathbb{P}}_{n;\alpha,h},\text{ as } n \to +\infty,
	\]
	where $\mathcal{N}(0, v_0^\Delta(\alpha,h))$ is a centered Gaussian distribution with
	variance 	
	$v_0^\Delta(\alpha,h)=  \frac{3{u^*_0}^4(\alpha,h)}{4 c_0}$,
being $c_0\equiv c_0(\alpha,h):= \frac{1-2\alpha[u^*_0(\alpha,h)]^2[1-u^*_0(\alpha,h)]}{4u^*_0(\alpha,h)[1-u^*_0(\alpha,h)]}$.
\end{conjecture}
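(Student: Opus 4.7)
My plan is to mirror the saddle-point approach used successfully for the mean-field model (Theorem~\ref{Thm_standard_CLT}), transplanted to the graphon space where the edge-triangle Hamiltonian naturally lives. Starting from the tilted representation \eqref{tilted_measures} and Remark~\ref{Rmk_tilted_LDP}, the ERG concentrates in $(\widetilde{\mathcal W},\delta_\square)$ on the constant graphon $\tilde u_0^*$ with speed $n^2$ and rate function $\mathcal I_{\alpha,h}$. Off criticality, I expect this rate function to be strongly convex in a graphon neighborhood of $\tilde u_0^*$, so the tilted measure should be approximately Gaussian at the fluctuation scale $\psi/\sqrt n$ in $L^2([0,1]^2)$, with a Hessian operator that is well-understood in the basis of constants plus mean-zero perturbations.

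Concretely, I would first refine Theorem~\ref{Thm_convergence_in_distribution} to a local-CLT-type asymptotics for $\widetilde{\mathbb Q}_{n;\alpha,h}$ around $\tilde u_0^*$, by Laplace-expanding the rate function and performing the Gaussian integral over the directions orthogonal to constants. Then, under the parametrization $g = u_0^* + \psi/\sqrt n$, the delta-method identity
\begin{equation*}
t(H_2, u_0^* + \psi/\sqrt n) = (u_0^*)^3 + \frac{3(u_0^*)^2}{\sqrt n}\int_{[0,1]^2}\!\!\psi \,+\, O(n^{-1})
\end{equation*}
shows that, at leading order, fluctuations of the triangle density are driven by the constant mode of $\psi$, i.e., by fluctuations of the edge density $t(H_1,\cdot)$. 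After integrating out the orthogonal modes (which only adjust the normalization), the resulting scalar saddle-point problem should coincide with the one-dimensional Laplace problem analyzed in Section~\ref{proofs-mf} for the mean-field model, producing a centered Gaussian limit with variance $3(u_0^*)^4/(4 c_0) = v_0^\Delta(\alpha,h)$, matching the mean-field constant. This structural coincidence is also what underlies the belief that the two models are asymptotically equivalent.

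The hardest step will be justifying the Gaussian approximation on graphon space with enough precision to yield a genuine CLT rather than just an LDP. This requires, in particular: (i) proving strong convexity of $\mathcal I_{\alpha,h}$ at $\tilde u_0^*$ with a spectral gap of the Hessian bounded away from zero, a property that must fail at $(\alpha_c,h_c)$ and is consistent with the conjecture's exclusion of the critical point; (ii) bounding the cubic and higher-order Taylor remainders of $\mathcal I_{\alpha,h}$ uniformly on a $\psi/\sqrt n$-neighborhood; and (iii) replacing the combinatorial sum over $\widetilde{\mathcal W}$ by a Gaussian integral on an infinite-dimensional space, for which there is no direct analogue of the Stirling approximation \eqref{Stirling} that powered Lemma~\ref{lemma_Z}. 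An alternative route would be to couple the ERG to $\mathrm{ER}(u_0^*)$ via a Dobrushin-type uniqueness argument and then invoke the classical CLT for triangle counts in dense Erd\H os--R\'enyi graphs; however, verifying the coupling uniformly on $\mathcal U^{rs}\setminus\{(\alpha_c,h_c)\}$ appears to demand essentially the same control on the Hessian of $\mathcal I_{\alpha,h}$, and in any case would first require proving a CLT for the scalar statistic $E_n$, which is the substantive content hidden in the approach above.
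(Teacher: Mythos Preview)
The statement you are attempting to prove is a \emph{conjecture} in the paper, not a theorem: there is no proof to compare against. What the paper offers in Section~\ref{CLT_discussion} is a one-dimensional heuristic based on the LDP of Remark~\ref{Rmk_tilted_LDP}, namely the informal density computation \eqref{LDP_cong}--\eqref{dens1}, together with simulations. The authors explicitly flag (below \eqref{dens2}) that the error terms in the Laplace expansion and the shift term $n({u_0^*}^3 - m_n^\Delta)$ in \eqref{decomp1} may both be non-negligible, and they only \emph{believe} that a ``subtle compensation'' produces the stated limit. So the honest answer is that your proposal is not competing with a proof; it is competing with an open problem.

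Your plan is more ambitious than the paper's heuristic in that you propose to work genuinely on graphon space rather than assuming the scalar reduction from the outset. That is the right instinct, but the obstacles you list under (i)--(iii) are exactly why the result remains conjectural, and none of them is close to resolved by what you wrote. In particular: for (i), note that even the \emph{scalar} rate function $I_{\alpha,h}$ is convex only on $(-2,\alpha_c)\times\mathbb R$ (as the paper restricts to in its heuristic), so on the rest of $\mathcal U^{rs}\setminus\{(\alpha_c,h_c)\}$ you cannot hope for global strong convexity of $\mathcal I_{\alpha,h}$ on $\widetilde{\mathcal W}$ --- at best a local nondegeneracy at $\tilde u_0^*$, which is a much weaker input for a Laplace argument. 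For (iii), there is currently no analogue of Lemma~\ref{lemma_Z} for the genuine edge-triangle partition function, and producing one is essentially the whole problem: the combinatorics of $\mathcal G_n$ do not reduce to a one-parameter sum the way they do in \eqref{mf-hamiltonian-scalar}. Your alternative route via a Dobrushin coupling to $\mathrm{ER}(u_0^*)$ is also reasonable as a strategy, but as you note it requires the same Hessian control and in addition a CLT for $E_n$ under $\mathbb P_{n;\alpha,h}$, which is itself open. In short, your proposal is a fair research outline, but it is not a proof, and the paper does not contain one either.
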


\begin{conjecture}
	[Non-standard CLT for ${T}_n$]\label{cong2}
	If $(\a,h)=(\alpha_c,h_c)$,
	\begin{equation}\label{distrib_ns}	
	6 \, \frac{\frac{{T}_n}{n} - \frac{n^{2}}{6}m^{\Delta}_n(\a_c,h_c)}{n^{3/2}} \xrightarrow{\;\;\mathrm{d}\;\;}{}  Y \quad \text{ w.r.t. } {\mathbb{P}}_{n;\alpha_c,h_c},\text{ as } n \to +\infty,
	\end{equation}
	where $Y$ is a generalized Gaussian random variable with Lebesgue density ${\ell}^c(y)\propto e^{-\frac{3^8}{2^{14}}y^{4}}$.	
\end{conjecture}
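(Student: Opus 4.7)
The plan is to transfer the non-standard CLT obtained for $\bar T_n$ in Thm.~\ref{Thm_non-standard_CLT} to the true triangle count $T_n$, working in graphon space with the representation \eqref{repr_pf} and the tilted measure $\widetilde{\mathbb{Q}}_{n;\alpha_c,h_c}$ that was already used in the proof of Thm.~\ref{Thm_convergence_in_distribution}. The heuristic basis is that under $\mathbb{P}_{n;\alpha_c,h_c}$ the graphon concentrates exponentially around the constant $\tilde u_c^*\equiv u_c^*=2/3$ (by Rem.~\ref{Rmk_tilted_LDP}), so that $6T_n/n^3$ and the mean-field surrogate $6\bar T_n/n^3=(2E_n/n^2)^3$ should coincide up to corrections of order lower than the conjectured fluctuation scale $n^{-1/2}$.

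Concretely, I would proceed in three steps. First, establish the analogous non-standard CLT for the edge density, i.e.~$\sqrt n(2E_n/n^2-u_c^*)\xrightarrow{d}Z$ with density $\propto e^{-\frac{81}{64}z^4}$, by refining the LDP of Rem.~\ref{Rmk_tilted_LDP} into a local limit theorem at the unique minimizer $\tilde u_c^*$ of $\mathcal{I}_{\alpha_c,h_c}$. Since the Hessian of $\mathcal{I}_{\alpha_c,h_c}$ vanishes in the constant direction at $\tilde u_c^*$ (the relevant derivatives are exactly those producing the expansion $-\frac{81}{64}(m-u_c^*)^4$ in \eqref{taylor_g}), one must Taylor-expand to fourth order along the constant mode and treat the transversal graphon modes separately. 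Second, show that $(T_n-\bar T_n)/n^{5/2}\to 0$ in probability: on the event where the graphon is within $n^{-1/2}$ of $\tilde u_c^*$, writing $g=u_c^*+n^{-1/2}\varphi$ one gets $t(H_2,g)-t(H_1,g)^3=O(n^{-1})$, hence $T_n-\bar T_n=O(n^2)$, negligible at scale $n^{5/2}$. Finally, apply the delta method to $m\mapsto m^3$ at $u_c^*$ to convert the CLT for $E_n$ into the claimed CLT for $T_n$; the Jacobian $3u_c^{*2}=4/3$ maps the density $\propto e^{-\frac{81}{64}z^4}$ of $Z$ onto $\propto e^{-\frac{3^8}{2^{14}}y^4}$ of $Y=\frac{4}{3}Z$, exactly reproducing the limit in Thm.~\ref{Thm_non-standard_CLT}.

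The main obstacle is the first step: extending the Chatterjee--Varadhan LDP into a quantitative local central limit theorem at a degenerate critical graphon. The space $(\widetilde{\mathcal{W}},\delta_\square)$ is infinite-dimensional and not Hilbertian, the Yang--Lee theorem is unavailable (as remarked after Thm.~\ref{genHk}), and the rate function is degenerate at $\tilde u_c^*$, so off-the-shelf refinements of Laplace's method do not apply. A plausible strategy is to decompose the fluctuation $\varphi$ into its mean (governed by the quartic rate function and producing the limit $Z$) and its mean-zero transversal component (which remains non-critical and should contribute only subgaussian corrections that can be integrated out); controlling rigorously the interaction between these families of modes in the cut-distance topology is the delicate technical point, and explains why the statement is here presented as a conjecture rather than a theorem.
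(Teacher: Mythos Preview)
This statement is a \emph{conjecture} in the paper, not a theorem; the paper offers only a heuristic (the LDP-based density computation in the paragraph ``Heuristics on the CLT'') and simulations, and explicitly flags the missing ingredients. Your proposal is likewise a strategy rather than a proof, and you correctly recognise this in your final paragraph. So the right comparison is between two heuristics.

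Your route differs from the paper's. The paper works directly with the triangle density: it uses the tilted LDP of Rem.~\ref{Rmk_tilted_LDP} to write $\mathbb P_{n;\alpha_c,h_c}(6T_n/n^3\in C_\varepsilon)\approx e^{-n^2\inf I_{\alpha_c,h_c}}$, Taylor-expands the \emph{scalar} rate $I_{\alpha_c,h_c}$ to fourth order at $u_c^*$, and reads off the generalized Gaussian density. You instead factor through the edge density: (1) a non-standard CLT for $2E_n/n^2$, (2) a comparison $T_n-\bar T_n=o(n^{5/2})$, (3) the delta method. Your decomposition is more modular and makes clearer what would actually need to be proved; the paper's heuristic is shorter but hides all the difficulty in the symbol ``$\approx$''.

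There are, however, two concrete gaps in your outline beyond the acknowledged obstacle in step~(1). First, in step~(2) the expansion $g=u_c^*+n^{-1/2}\varphi$ with $\varphi=O(1)$ is not legitimate for the empirical graphon $g^G$, which is $\{0,1\}$-valued; the perturbation $\varphi$ is pointwise of order $n^{1/2}$, and the counting lemma in cut norm only gives $t(H_2,g)-t(H_1,g)^3=O(n^{-1/2})$, not $O(n^{-1})$. The conclusion $T_n-\bar T_n=O(n^2)$ is plausible (it should follow from an Erd\H os--R\'enyi-type conditional analysis, since the ``transversal'' fluctuations are of order $n^{-1}$ in density), but it requires a different argument than the graphon Taylor expansion you wrote. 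Second, the conjecture centers at $m_n^\Delta(\alpha_c,h_c)=\mathbb E_{n;\alpha_c,h_c}[6T_n/n^3]$, not at $(u_c^*)^3$, so even after steps~(1)--(3) you would still need $\sqrt n\bigl(m_n^\Delta(\alpha_c,h_c)-(u_c^*)^3\bigr)\to 0$, i.e.\ the analogue of Cor.~\ref{cor_speed_mf} for the true model. The paper explicitly notes (end of the heuristics paragraph) that this is not available and that a ``subtle compensation'' with the error terms is expected; your proposal does not address this shift term at all.
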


Having at hand a large deviation principle (see Rem.~\ref{Rmk_tilted_LDP}) allows to perform a heuristic calculation to support Conjs.~\ref{cong1} and \ref{cong2}. 

\paragraph{Heuristics on the CLT.}
To guarantee convexity of the rate function $\mathcal{I}_{\alpha,h}$, we restrict here to the region $(\alpha,h)\in (-2,\alpha_c)\times \mathbb{R}$ (see \cite{RY}, Prop.~3.2). Let us define the random variables
		\begin{align}
			V_n&:= \sqrt{6} \, \frac{\frac{T_n}{n}-\frac{n^2}{6}{u_0^*}^3(\alpha,h)}{n}= \frac{n}{\sqrt{6}}\left[ \frac{6 T_{n}}{n^3} -{u_0^{*}}^3(\alpha,h)\right], \qquad (\alpha,h)\in (-2,\alpha_c)\times \mathbb{R}, \label{Vn}\\
			U_n &:= 6 \, \frac{\frac{T_n}{n}-\frac{n^2}{6}{u_0^*}^3(\alpha_c,h_c)}{n^{3/2}} = \sqrt{n}\left[ \frac{6 T_{n}}{n^3} -{u_0^{*}}^3(\alpha_c,h_c)\right],
		\end{align}
		so that we obtain the following decomposition:
		\begin{align}
			\sqrt{6} \, \frac{\frac{T_n}{n}-\frac{n^2}{6}m^{\Delta}_n(\alpha,h)}{n}&=
			V_n + n({u_0^*}^{3}(\alpha,h)-m^{\Delta}_n(\alpha,h)), \qquad (\alpha,h)\in (-2,\alpha_c)\times \mathbb{R} \label{decomp1}\\
			6 \, \frac{\frac{T_n}{n}-\frac{n^2}{6}m^{\Delta}_n(\alpha_c,h_c)}{n^{3/2}}&=
			U_n + \sqrt{n}({u_0^*}^{3}-m^{\Delta}_n(\alpha_c,h_c)). \label{decomp2} 
		\end{align}
For all $(\alpha,h)\in (-2,\alpha_c)\times \mathbb{R}$ let $C_{\epsilon} := \{y \in [0,1] : |y^3 - {u_0^*}^3(\alpha,h)| \geq \epsilon \}$ where we assume $0<  \epsilon < \min\{{u_0^{*}}^3(\alpha,h), 1- {u_0^{*}}^3(\alpha,h)\}$.  We claim:  
		\begin{align}\label{LDP_cong}
			\mathbb{P}_{n;\alpha,h}\left(\frac{6T_n}{n^3} \in C_{\epsilon} \right) \approx e^{-n^2\inf_{x\in C_{\epsilon}} I_{\alpha, h}(x) } 
		\end{align}
where $I_{\alpha,h}(x) = \frac{1}{2}I(x)  - \frac{\alpha}{6}x^3  - \frac{h}{2}x + f_{\alpha, h}$, and $I(x)=x\ln x + (1-x) \ln (1-x)$. 
Note that we have used the LDP of Rem.~\ref{Rmk_tilted_LDP},  exploiting the fact that whenever we work in replica symmetric regime, the variational problem $\inf_{\tilde{g}\in \widetilde{C}_{\epsilon}} \mathcal{I}_{\alpha, h}(\tilde{g})$ on the set $\widetilde{C}_{\epsilon}:=\{\tilde{g}\in \widetilde{W}: |t(H_2, \tilde{g})- {u_0^*}^3(\alpha,h)| \geq \epsilon \}$ reduces to the scalar problem on the r.h.s.~of \eqref{LDP_cong} (indeed $\mathcal{I}_{\alpha, h}$ coincides with $I_{\alpha,h}$ when it is computed on constant graphons).
 		\begin{figure}[h!]
			\centering
			\includegraphics[scale=0.7]{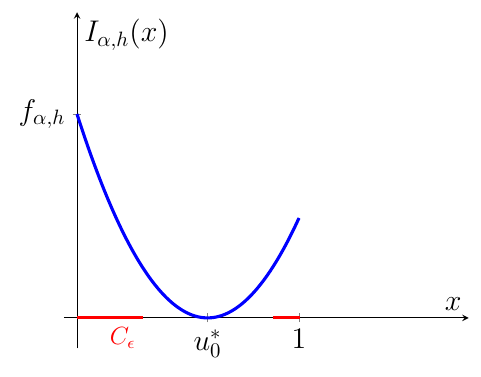}
\caption{\small Qualitative representation of $I_{\alpha,h}$ in $\mathcal{U}^{rs}$. The red interval corresponds to the set $C_{\epsilon}$ defined below \eqref{decomp2}.} \label{I}
		\end{figure}
	\noindent Moreover, since $(\alpha,h)\in (-2,\alpha_c)\times \mathbb{R}$, the function $I_{\alpha,h}$ is continuous, positive, and convex (see \cite{RY}, Prop.~3.2); furthermore, it admits a unique zero at $x = u_0^*$ \footnote{To the sake of readability in the following part of the text we omit the dependence on $(\alpha,h)$.} (see Fig.~\ref{I} for a qualitative representation $I_{\alpha,h}$). As a consequence,
		\begin{align} \label{Taylor}
		\inf_{x\in C_{\epsilon}} I_{\alpha, h}(x)&= I_{\alpha,h}\left(\sqrt[3]{{u_0^{*}}^{3} \pm \epsilon}\right)=I_{\alpha,h}\left(u_0^* \pm \frac{\epsilon}{3{u_0^{*}}^{2}} +o(\epsilon^2) \right), 
		\end{align}
where in last passage we used the assumption $0<  \epsilon < \min\{{u_0^{*}}^3(\alpha,h), 1- {u_0^{*}}^3(\alpha,h)\}$.  
Set $\delta:=\frac{\epsilon}{3{u_0^{*}}^2} +o(\epsilon^2)$; we determine which is the order of the functions $I_{\alpha,h}(u_0^{*}\pm \delta)$ as $\delta$ goes
to zero. We obtain:
		\begin{align}\label{rate_fct_expansion}
			I_{\alpha,h}(u_0^* \pm \delta) &= \pm \frac{\delta}{2} \left[ \ln \frac{u_0^*}{1-u_0^*} - \alpha (u_0^*)^2 - h\right] + \frac{\delta^2}{2} \left[ \frac{1}{2u_0^*(1-u_0^*)} - \alpha u_0^*\right] \\[.3cm]
			&\quad \pm \frac{\delta^3}{2} \left[ \frac{2u_0^*-1}{6(u_0^*)^2(1-u_0^*)^2} - \frac{\alpha}{3}\right] + \frac{\delta^4}{24} \, \frac{3(u_0^*)^2 -3u_0^*+1}{(u_0^*)^3(1-u_0^*)^3} + o(\delta^4).\nonumber
		\end{align}
		\begin{itemize}
			\item If $(\alpha,h)\in (-2,\alpha_c)\times \mathbb{R}$, then the first non-vanishing coefficient in \eqref{rate_fct_expansion} is the coefficient of the second order term, which equals $\frac{I_{\alpha,h}''(u_0^*)}{2}$, and it is strictly positive as $I_{\alpha,h}$ is strictly convex in the parameter range we are considering. Therefore, we get 
\begin{equation}\label{I_exp}			
I_{\alpha,h}(u_0^* \pm \delta) = \frac{I_{\alpha,h}''(u_0^*)}{2} \delta^2 + o(\delta^2),
\end{equation}
and, as $n\to\infty$, 	
\begin{align}
			\mathbb{P}_{n;\alpha,h} \left( V_n \in dx \right) = \mathbb{P}_{n;\alpha,h} \left( \tfrac{6T_n}{n^3} \in {u_0^*}^3+\frac{\sqrt{6}}{n} dx 
			\right) &
			\overset{\eqref{LDP_cong}\eqref{Taylor}}{\approx} e^{ -n^2 I_{\alpha,h} \left( \sqrt[3]{{u_0^*}^3 +\frac{\sqrt{6}}{n}x}\right)} dx \nonumber\\&= e^{-\frac{I''_{\alpha,h}(u_0^{*}) x^2}{3 {u_0^{*}}^4} + o({x^2})}dy, \label{dens1}
\end{align}
where in the last equality we used \eqref{I_exp} injecting $\epsilon= \frac{{\sqrt{6}x}}{n}$ in the definition of $\delta$.
In \eqref{dens1} we recognize the density of a Gaussian random variable with variance $\frac{3 {u_0^{*}}^4}{2I''_{\alpha,h}(u_0^{*})}$. A direct computation shows that $I''_{\alpha,h}(u_0^{*})=2c_0$ (where $c_0$ is given in \eqref{c}), hence we recover the value \eqref{varCLT} stated in Thm.~\ref{Thm_standard_CLT}.

\item If $(\alpha,h) = (\alpha_c,h_c) = \left( \frac{27}{8}, \ln 2 - \frac{3}{2} \right)$, since $u_0^*=u_0^*(\alpha_c,h_c) = \frac{2}{3}$, \eqref{rate_fct_expansion} reduces to
			$$I_{\alpha,h}(u_0^* \pm \delta) = \frac{81}{64} \, \delta^4 + o(\delta^4). $$
As $n \to +\infty$, we find
		\begin{align}
			\mathbb{P}_{n;\alpha_c,h_c} \left( U_n \in dx \right) = \mathbb{P}_{n;\alpha_c,h_c} \left( \tfrac{6T_n}{n^3} \in {u_0^*}^3+\tfrac{dx}{\sqrt{n}} \right)
			&\overset{\eqref{LDP_cong}\eqref{Taylor}}{\approx} e^{ -n^2 I_{\alpha_c,h_c} \left( \sqrt[3]{{u_0^*}^3 +\frac{{x}}{\sqrt{n}}}\right)} dx  \nonumber\\
&= e^{-\frac{3^8}{2^{14} } \, {x^4} + o({x^4})}dx, \label{dens2}
		\end{align}
where in the last equality we used \eqref{I_exp} injecting $\epsilon= \frac{{x}}{\sqrt{n}}$ in the definition of $\delta$. In \eqref{dens2} we can immediately recognize  the same density $\bar\ell^c$ stated in Thm.~\ref{Thm_non-standard_CLT}.
		\end{itemize}
However, notice that the error terms appearing in \eqref{dens1}--\eqref{dens2}
		might be relevant,  as well as the \emph{shift terms} in \eqref{decomp1}--\eqref{decomp2}, similarly to what happens in \cite[Thm.~1.4(c)]{MX} for the two-star model (indeed we don't have the equivalent of Cor.~\ref{cor_speed_mf}, which is valid instead for the mean-field model). However, we believe that a subtle compensation among this two contributions produces the conjectured results.
In the next section, we show two simulations that support Conj.~\ref{cong1}.

\subsection{Simulations} \label{simulations}
We perform a discrete-time  Glauber  dynamics, namely an ergodic reversible
Markov chain on $\mathcal{A}_n$ with stationary distribution $\mathbb{P}_{n;\alpha,h}$.
A step of the Glauber  dynamics can be described as follows: 
\begin{itemize}
\item[1.] Uniformly sample $\ell \in \cE_n$, and let $x^{+}$ (resp.~$x^{-}$) be the adjacency matrix, with $x^{+}_{\ell}=1$ (resp.~$x^{-}_{\ell}=0$), that coincides with $x$ for all elements except for $x_{\ell}$. 
Let $\mathcal{W}_{\ell}:=\{\{i,j\}: i,j\in \cE_n,  i\sim j, \{i,j,\ell\}\in \mathcal{T}_n \Leftrightarrow x_{\ell}=1\}$ the set of two-stars insisting on the edge (or non-edge) $x_{\ell}$. Here, the symbol $\sim$ denotes that the two edges $i$ and $j$ are neighbors. 
\item[2.] Given the current state represented by $x \in \cA_n$,  the next state is obtained by performing the transition $x\mapsto x^{+}$ (resp.~$x\mapsto x^{-}$) with probability 
\begin{equation}\label{pn}
	p_n(x,\ell) := \frac{e^{\alpha \sum_{\{i,j\}\in \mathcal{W}_{\ell}} x_{i}x_{j} +h}}{1+e^{\alpha \sum_{\{i,j\}\in \mathcal{W}_{\ell}} x_{i}x_{j} +h}} \quad (\text{resp.~}1- p_n(x,\ell)\,)\,.
\end{equation}
\end{itemize}
The update probability \eqref{pn} is given in \cite[Lem.~3]{BBS} (or equivalently \cite{BHLN}, pag.~18). Moreover in \cite[Thm.~5]{BBS} it has been proved that the mixing time of the Glauber  dynamics is $\Theta(n^2\ln(n))$ whenever $(\alpha,h)\in \mathcal{U}^{rs}$. 

\begin{figure}[h!]
	\begin{subfigure}{.5\textwidth}
		\centering
		\includegraphics[width=.8\linewidth]{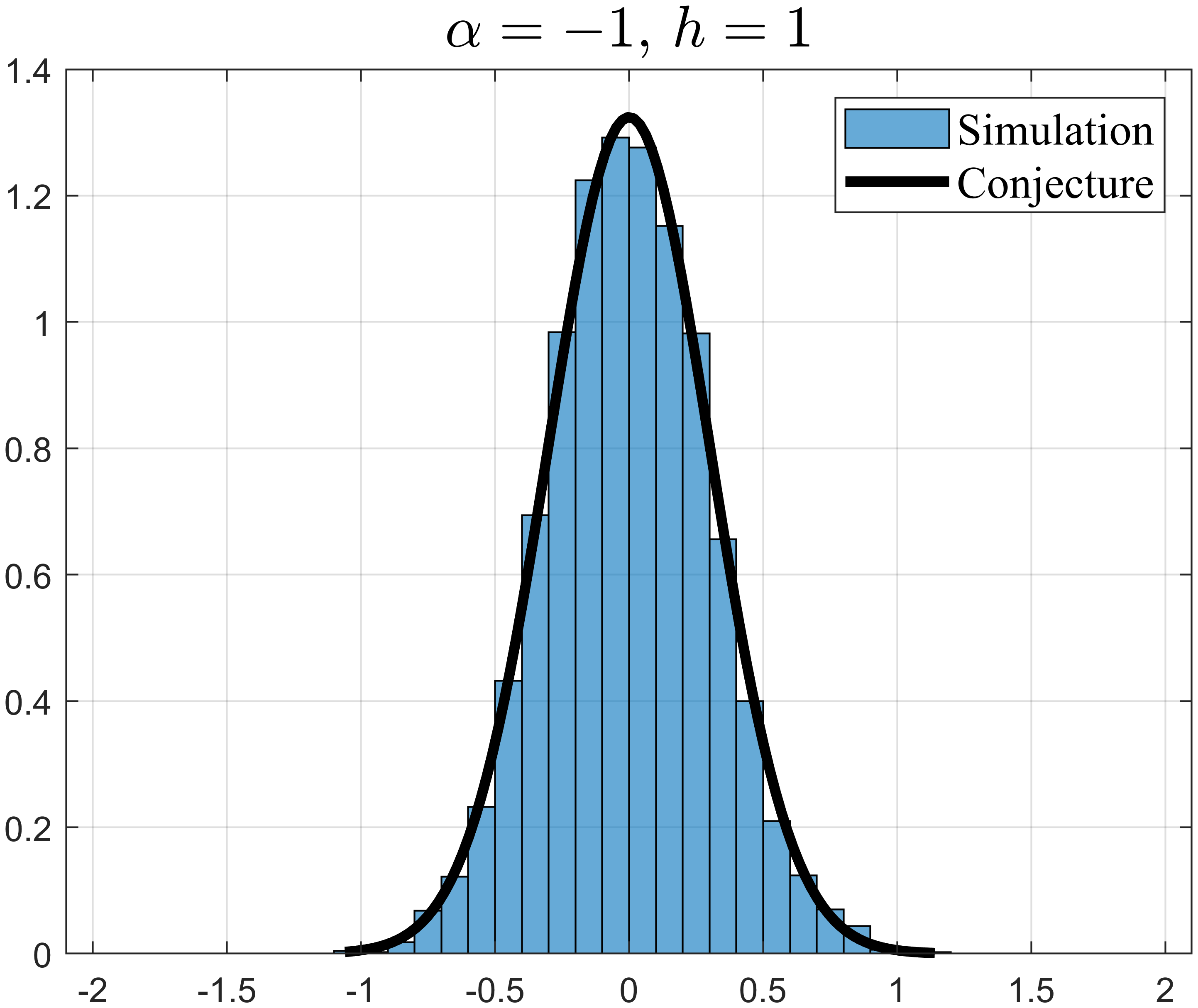}
	\end{subfigure}%
	\begin{subfigure}{.5\textwidth}
		\centering
		\includegraphics[width=.8\linewidth]{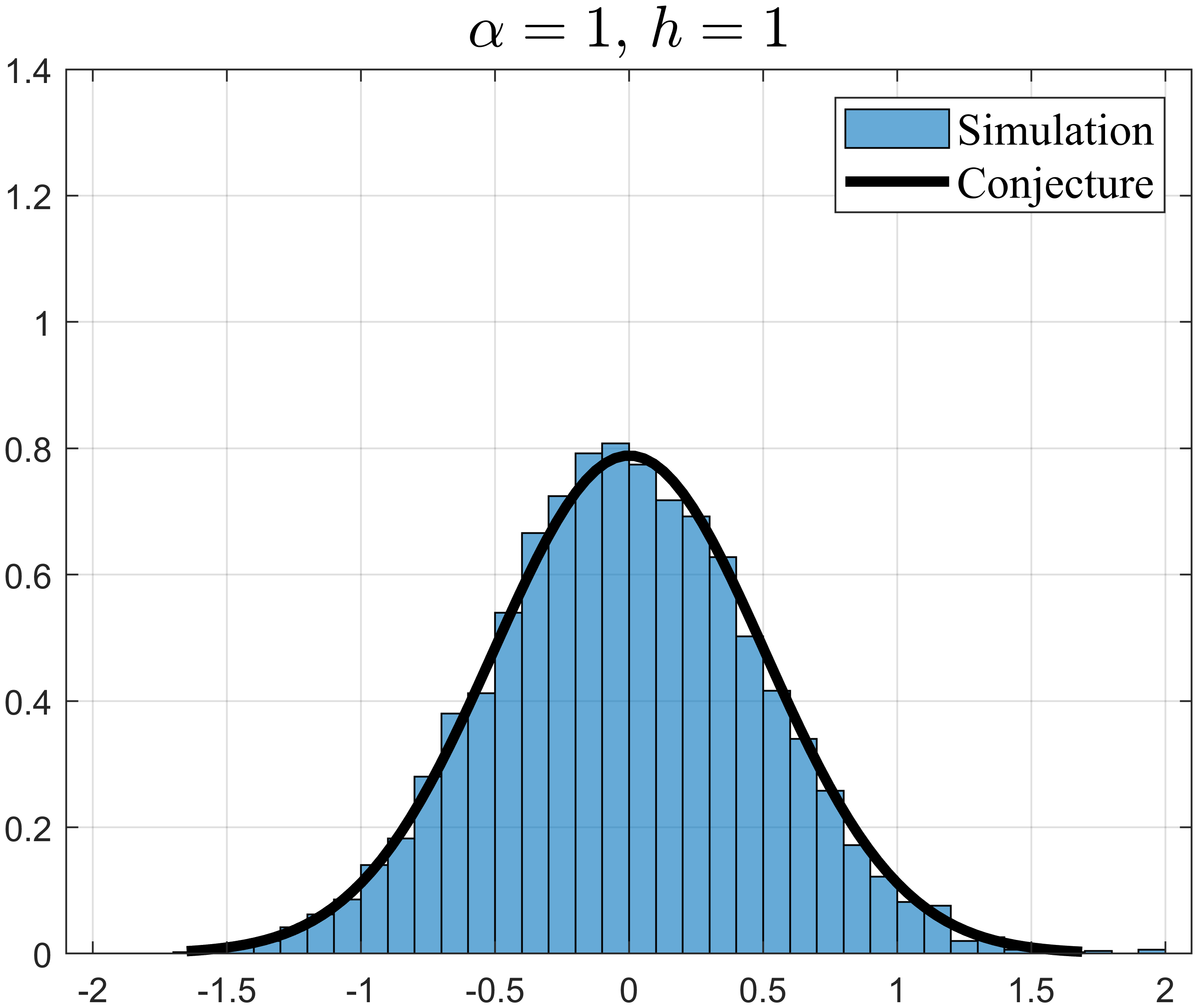}
	\end{subfigure}
	\caption{\small The picture displays a simulation of the distribution of $\sqrt{6} \, \frac{\frac{T_n}{n}-\frac{n^2}{6}m^{\Delta}_n(\alpha,h)}{n}$ obtained with $n=150$, $M=5000$ samples, and parameters $h = 1$, $\a=\pm1$ (histogram), and the pdf of the Gaussian distribution  introduced in Conj.~\ref{cong1} (continuous line).}
	\label{fig:clt-n150m5000h1}
\end{figure}

Pict.~\ref{fig:clt-n150m5000h1} shows a numerical simulation of the probability distribution of \eqref{distrib_ns} obtained with $n=150$ and $M=5000$ samples, both for negative and positive values of $\alpha$. The picture also displays the Gaussian probability density function given in Conj.~\ref{cong1}, showing that it approximates the histogram with good accuracy, thus supporting the conjecture.  
\begin{remark}
Note that, despite \cite[Thm.~5]{BBS} holds in $\mathcal{U}^{rs}$, which includes $(\alpha_c,h_c)$, when we perform the Glauber  dynamics at the critical point, the mixing time that we observe is not $\Theta(n^2\ln(n))$, as we would expect. We believe that the proximity of the point to the critical curve $\mathcal{M}^{rs}$ where the mixing time is exponential (see \cite[Thm.~6]{BBS}), is responsible for this behavior.  As a consequence, 
the incredibly high computational cost prevented us from getting an equivalent simulation for supporting Conj.~\ref{cong2} and Conj.~\ref{cong3}. 
\end{remark}

\noindent {\bf Acknowledgements.} We would like to thank Alessandra Bianchi and Francesca Collet for useful discussions and suggestions. \\
This research has been partially funded by the INdAM-GNAMPA Project, CUP E53C23001670001.
\newpage

\end{document}